\theoremstyle{plain}
\newtheorem{hyp}{Assumption}
\newtheorem{prop}{Proposition}
\newtheorem{lemma}{Lemma}
\newtheorem{rem}{Remark}
\newtheorem{corollary}{Corollary}
\newtheorem{theorem}{Theorem}
\newcommand{\NM}[1]{{\color{red} #1}}
\title{\LARGE \bf
Uniform \ifthenelse{\boolean{CONF}}{\NM{quasi}}{non}-convex optimisation via Extremum Seeking}
\author{Nicola Mimmo, Lorenzo Marconi and Giuseppe Notarstefano
\thanks{This research was partially supported by the European Project "AerIal RoBotic technologies for professiOnal seaRch aNd rescuE" (AirBorne), Call: H2020, ICT-25-2016/17, Grant Agreement no: 780960.}
\thanks{The authors are with the Department of Electrical and Information Engineering,  "Guglielmo Marconi", University of Bologna, 40126 Bologna, Italy, e-mail: 
        {\tt\small \{nicola.mimmo2, lorenzo.marconi,  giuseppe.notarstefano\}@unibo.it}}%
}
\begin{document}

\setboolean{LONG}{true}
\setboolean{CONF}{false}

\tikzstyle{triangle} = [draw, regular polygon, isosceles triangle]

\maketitle
\thispagestyle{empty}
\pagestyle{empty}

\begin{abstract}
The paper deals with a well-known extremum seeking scheme by proving uniformity properties with respect to the amplitudes of the dither signal and of the cost function. Those properties are then used to show that the scheme guarantees the global minimiser to be semi-global practically stable despite the presence of local \ifthenelse{\boolean{CONF}}{saddle points}{minima}. \ifthenelse{\boolean{CONF}}{}{Under the assumption of a   globally Lipschitz cost function,  it is shown that  the scheme, {improved through a high-pass filter}, makes the global minimiser practically stable with a global domain of attraction.} \ifthenelse{\boolean{CONF}}{To achieve these results, we analyse the average system associated with the extremum seeking scheme via arguments based on the Fourier series.}{}
\end{abstract}
{\small\textbf{
\textit{Keywords---}Extremum Seeking, Fourier Series, Non-Convex Optimisation.}
}
\section{Introduction}

The early research on the Extremum Seeking (ES) dates back to the 1920s \cite{DOCHAIN2011369} and since then this strategy has been extensively exploited to solve several optimisation problems in electronics \cite{Toloue2017Multivariable}, mechatronics \cite{Malek2016Fractional}, mechanics \cite{Zhang2007Numerical}, aerodynamics \cite{Wang2000Experimental}, thermohydraulics \cite{Burns2020Proportional}, and thermoacoustic \cite{Moase2010Newton}.  Some of the most popular ES schemes are those proposed in \cite{tan2006non, KRSTIC2000595}, which represent the subject of the proposed analysis, although a remarkable variety of schemes were proposed, such as the adoption of integral action in \cite{guay2016perturbation}, the use of a cost function's parameter estimator \cite{Guay2014Minmax,Nesic2013Framework}, the introduction of an observer \cite{HAZELEGER2020109068}, the extension to fractional derivatives in \cite{Malek2016Fractional}, the use of a predictor to compensate output delays in \cite{Oliveira2017Extremum}, the implementation of a Newton-based algorithm avoiding the Hessian matrix inversion \cite{LABAR2019356, Oliveira2020Multivariable}, and the concurrent use of a simplex-method to find the global minimiser \cite{Zhang2016Simplex}.

All the methods, in a way or another, share the common philosophy of perturbing the system  subject to optimisation using the so-called \textit{dither} signal, which is periodic in most of the proposed solutions,  to unveil in which direction the associated cost function decreases (in the case of a minimisation problem).  The analysis tool that is typically adopted to prove {\em practical} convergence to the minimiser exploits the \textit{averaging theory}  \cite{Sanders1985Averaging}{. It is shown} that the \textit{average} system {\em asymptotically} converges to the minimiser and that the trajectories of the original system {remain} closed to the \textit{average} ones if a design parameter{,   namely $\gamma$,} is kept sufficiently small  \cite{Teel2003unified,TAN2005550,KRSTIC2000595,TEEL2000317,TEEL1999329}. 

\ifthenelse{\boolean{CONF}}{}{A further fundamental parameter of most of the ES schemes is represented by the dither amplitude, named $\delta$ from now on. Roughly, it represents the local range in which the cost function is evaluated to estimate the local cost variation and so the decreasing direction. Differently from $\gamma$, $\delta$ is subject to a design compromise \cite{Nesic2006Choice}. From one hand, the smaller $\delta$ the more the estimation of the local cost variation approximates the local cost gradient \cite{TAN2005550, DURR20131538}. Moreover, assuming the {current optimisation variable} sufficiently close to the actual minimiser, the smaller $\delta$ the better is the cost optimisation. On the other hand, as far as local minima are conceived, small $\delta$ could trap the scheme at a local minimiser due to a local inversion of the cost gradient. So, to overcome this issue, the dither amplitude should be 	sufficiently large to make the estimation of the cost variation robust in respect of local cost fluctuations \cite{SUTTNER2022Robustness}. For this reason, some authors suggested an adaptation policy for the dither amplitude which shrinks $\delta$ over time so that at the beginning a large dither lets the ES overcome local minima while asymptotically a small dither guarantees good optimisation performances \cite{TAN2009245,BHATTACHARJEE2021Extremum}.}

For sake of completeness, it is worth mentioning the results in \cite{scheinker2014extremum,ZHU2022109965} in which the value of the cost function is directly used to perturb the phase of the dither signal rather than to estimate the local gradient. 
Lie-derivative arguments, instead of averaging techniques, are then adopted in the analysis.

Within the previous research context, this paper proposes \ifthenelse{\boolean{CONF}}{{two}}{three} contributions.

First, {We propose to study the average systems presented in \cite{tan2006non} via Fourier series arguments. This alternative approach allows handling a class of non-convex cost functions with a unique global minimiser.}
{We show that the average system trajectories converge to a neighbourhood of the minimiser. The size of this neighbourhood is proportional to $\delta$, which is not required to be small.} 
As for the second contribution, it is shown that the addition of a high-pass filter makes $\gamma$ independent (uniform) of the value of the cost function. We prove that semi-global and practical convergence to the minimiser is achieved with the parameter $\gamma$ only depending on the Lipschitz constant of the cost function in the domain of interest and not on its value. This allows for tuning of the algorithm that preserves good convergence speed even in presence of a large domain of attractions. \ifthenelse{\boolean{CONF}}{}{Third and final, we show that if the cost function is globally Lipschitz and under certain regularity conditions on the average system, the investigated ES scheme makes the global minimiser  practically stable with a global domain of attraction.}

	The rest of this paper is organised as follows. Section \ref{sec:Prob} provides the formulation of the problem and reviews the basic ES scheme proposed by \cite{tan2006non}. {In this section, we show that the ES scheme applicability can be extended to non strictly convex cost functions via averaging analyses, which are not based on Taylor's arguments.} Section \ref{sec:Global} describes the ES scheme improved with the high-pass filter and states the \ifthenelse{\boolean{CONF}}{result about the uniformity of $\gamma$ with respect to the cost amplitude.}{two main results: the first, about the uniformity of $\gamma$ with respect to the cost amplitude; the second, on the global domain of attraction of the minimiser.} The performance of the investigated ES schemes is tested through the simulations detailed in Section \ref{sec:Application}. Finally, Section \ref{sec:Concl} closes this paper with some concluding comments. \ifthenelse{\boolean{CONF}}{For lack of space, all the proofs of Lemmas, Propositions, and Theorems claimed in this paper are reported in \cite{Mimmo2022ESarXiv}.}{}

\section{Problem Formulation }
\label{sec:Prob}

The ES problem consists of the optimisation of a{n unknown} cost function $h\,:\,\mathbb{R}\to \mathbb{R}$ {satisfying} the following  two assumptions. 
\begin{hyp}
The function $h$ is smooth and there exists a  $x^\star \in \mathbb{R}$ such that 
	\label{hyp:Existence}
	\[ h(x)-h(x^\star) > 0\quad \forall\, x \in \mathbb{R}\,:\, x \ne x^\star.
	\]
\end{hyp} 
\begin{hyp}	
\label{hyp:Unicity}
		There exist a locally Lipschitz {and \textit{strictly quasi-convex}} function $m\,:\,\mathbb{R}\to \mathbb{R}$, a class-${\cal K}_\infty$ function $\alpha(\cdot)$, and a $A\geq 0$ such that
\ifthenelse{\boolean{CONF}}{
for all $x_1, x_2 \in \mathbb{R}\,:\, (x_1-x^\star)(x_2-x^\star)\ge 0$ \[|m(x_2)-m(x_1)| \ge \alpha(|x_2-x_1|).\]}{
	\begin{enumerate}
	\item $m(x)-A \le h(x) \le m(x)+A$ for all $x \in \mathbb{R}$
	\item for all $x_1, x_2 \in \mathbb{R}\,:\, (x_1-x^\star)(x_2-x^\star)\ge 0$ \[|m(x_2)-m(x_1)| \ge \alpha(|x_2-x_1|)\]
	\end{enumerate}
}
\end{hyp}
	
	\begin{rem}
As for Assumption \ref{hyp:Unicity}, \ifthenelse{\boolean{CONF}}{it}{we observe that the function $h(\cdot)$ might have local minima whose ''depth'' is bounded by the number $A$. When $A=0$ (which  implies $m(\cdot) \equiv h(\cdot)$)  Assumption \ref{hyp:Unicity}}
asks for a (not necessarily strict) monotone behaviour of $h$, where the latter  could have isolated saddle points. {In other words, \ifthenelse{\boolean{CONF}}{$h(\cdot)$}{$m(\cdot)$}
	belongs to the class of the so-called \textit{strictly quasi-convex} functions \cite{karamardian1967strictly}.} Assumption \ref{hyp:Unicity} is weaker than the common assumption $(\partial h(x)/\partial x)x> 0$ for any $x \ne x^\star$ typically present in literature, (see, among the others, [\cite{tan2006non}, Assumptions 3 and 4])  ruling out the existence of local \ifthenelse{\boolean{CONF}}{}{minima or even} saddle points. 
	\end{rem}

\ifthenelse{\boolean{CONF}}{}{
\begin{figure}
	\centering
\begin{tikzpicture}
	\draw[->] (-4,1) -- (3,1) node[right] {$x$};
	\draw[->] (-3,-0.5) -- (-3,3.5) node[right] {$h(x),\,m(x)$};
	\draw[scale=1, domain=-4:0, smooth, variable=\x, dashed] plot ({\x}, {0.5*(0.5*sin(deg(2*\x))-\x)});
	\draw[scale=1, domain= 0:1, smooth, variable=\x, dashed] plot ({\x}, {\x*\x});
	\draw[scale=1, domain= 1:2, smooth, variable=\x, dashed] plot ({\x}, {1+2*(\x-1)-(\x-1)*(\x-1)});
	\draw[scale=1, domain= 2:3, smooth, variable=\x, dashed] plot ({\x}, {2+0*(\x-2)+(\x-2)*(\x-2)}) node[right, xshift = -2, yshift = -5] {$m(x)$};
	\draw[scale=1, domain=-4:0, smooth, variable=\x] plot ({\x}, {0.4*sin(deg(5*\x))+0.5*(0.5*sin(deg(2*\x))-\x)});
	\draw[scale=1, domain= 0:1, smooth, variable=\x] plot ({\x}, {0.4*sin(deg(5*\x))+\x*\x});
	\draw[scale=1, domain= 1:2, smooth, variable=\x] plot ({\x}, {0.4*sin(deg(5*\x))+1+2*(\x-1)-(\x-1)*(\x-1)});
	\draw[scale=1, domain= 2:3, smooth, variable=\x] plot ({\x}, {0.4*sin(deg(5*\x))+2+0*(\x-2)+(\x-2)*(\x-2)}) node[left, yshift = 5, xshift = 2] {$h(x)$};
	\draw[scale=1, domain=-4:0, smooth, variable=\x, dotted] plot ({\x}, {0.5+0.5*(0.5*sin(deg(2*\x))-\x)});
	\draw[scale=1, domain= 0:1, smooth, variable=\x, dotted] plot ({\x}, {0.5+\x*\x});
	\draw[scale=1, domain= 1:2, smooth, variable=\x, dotted] plot ({\x}, {0.5+1+2*(\x-1)-(\x-1)*(\x-1)});
	\draw[scale=1, domain= 2:2.5, smooth, variable=\x, dotted] plot ({\x}, {0.5+2+0*(\x-2)+(\x-2)*(\x-2)});
	\draw[scale=1, domain=-4:0, smooth, variable=\x, dotted] plot ({\x}, {-0.5+0.5*(0.5*sin(deg(2*\x))-\x)});
	\draw[scale=1, domain= 0:1, smooth, variable=\x, dotted] plot ({\x}, {-0.5+\x*\x});
	\draw[scale=1, domain= 1:2, smooth, variable=\x, dotted] plot ({\x}, {-0.5+1+2*(\x-1)-(\x-1)*(\x-1)});
	\draw[scale=1, domain= 2:3, smooth, variable=\x, dotted] plot ({\x}, {-0.5+2+0*(\x-2)+(\x-2)*(\x-2)});
	\draw[<->] ({(-pi/(2*7))}, {0.5*(0.5*sin(deg(2*(-pi/(2*7))))-(-pi/(2*7)))}) -- ({(-pi/(2*7))}, {0.5+0.5*(0.5*sin(deg(2*(-pi/(2*7))))-(-pi/(2*7)))});
	\node (A) at (-0.4, 0.25) {$A$};
	\node at ({(-pi/(2*5))},{0.4*sin(deg(5*(-pi/(2*5))))+0.5*(0.5*sin(deg(2*(-pi/(2*5))))-(-pi/(2*5)))}) {$\bullet$};
	\node at ({(-pi/(2*5))},{-0.3+0.4*sin(deg(5*(-pi/(2*5))))+0.5*(0.5*sin(deg(2*(-pi/(2*5))))-(-pi/(2*5)))}) {$h(x^\star)$};
	\node at ({(-pi/(2*5))},1) {$\bullet$};
	\node at ({(-pi/(2*5))},{1+0.3}) {$x^\star$};
\end{tikzpicture}
\caption{Graphical representation of $h(\cdot)$, $m(\cdot)$, $x^\star$, and $A$ declared in Assumption  
	 \ref{hyp:Unicity}.}
\label{fig:h}	
\end{figure}
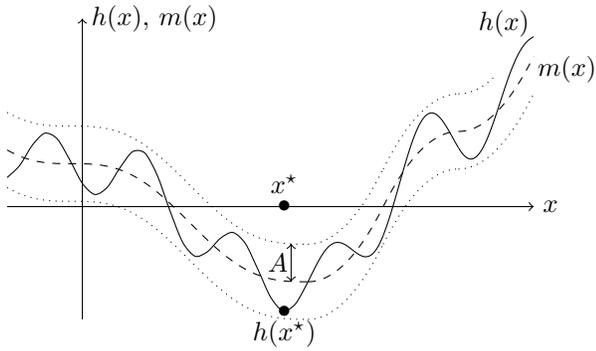
}

{T}he problem of {\em semi-global extremum seeking} can be formulated in the following way. For {any  $\epsilon >0$ and $r_0>0$,} design a system of the form 
\[
 \dot x = \varphi_{\epsilon, r_0}(x, h(x),t)  \quad x(0)=x_0, 
\]
so that for all $x_0$ satisfying $|x_0 - x^\star| \leq r_0$ the resulting trajectories $x(t)$ {are bounded and satisfy} $\lim_{t \to \infty}\sup |x(t) - x^\star| \leq \epsilon$.

\ifthenelse{\boolean{CONF}}{}{If the {convergence} property {is sought  $\forall x_0 \in \mathbb{R}$} with  $\varphi$ {independent of $\epsilon$ and $x_0$, then} the problem is referred to as {\em global extremum seeking}. }

Among the different ES schemes proposed in the literature to solve the previous problem, a common {one} is given by   (see  \cite{tan2006non})
\begin{equation}
	\label{eq:BasicES}
	\dot{x}=  -\gamma \, y_\delta(x,t) \, u(t)\qquad x(0) = x_0
\end{equation}
in which $y_\delta(x,t):=h(x + \delta u(t))$, $u(t):= \sin (2\pi t)$ is the dither signal and $\gamma, \delta > 0$ are tunable parameters\footnote{
The general case of a dither takes the form  $\tau\mapsto\sin(\omega \tau)$ with $\omega >0$, $\tau \in \mathbb{R}$ (as considered in \cite{tan2006non}) can be always obtained by  rescaling the time as $t = \tau \,2\pi/\omega$.}.
 The block diagram of this algorithm is represented in Figure \ref{fig:GloablES}. In the next part, we briefly comment on the main properties, as available in {the} literature, of this algorithm and strengthen them.  

   Since the right-hand side of \eqref{eq:BasicES} is 1-periodic, the average system linked to (\ref{eq:BasicES}) is given by (\cite{khalil2002nonlinear}, \S 10.4) 
\begin{equation}
	\label{eq:average}
	\dot{{x}}_a =  -\gamma \int_0^1 y_\delta({x}_a, t) u(t) dt \,.
\end{equation}

In the remaining part of the section, we present a different route for the analysis of \eqref{eq:average} based on a Fourier series expansion rather than on the Taylor one used in the available literature. 
This analysis allows one to claim stronger results on \eqref{eq:BasicES}.

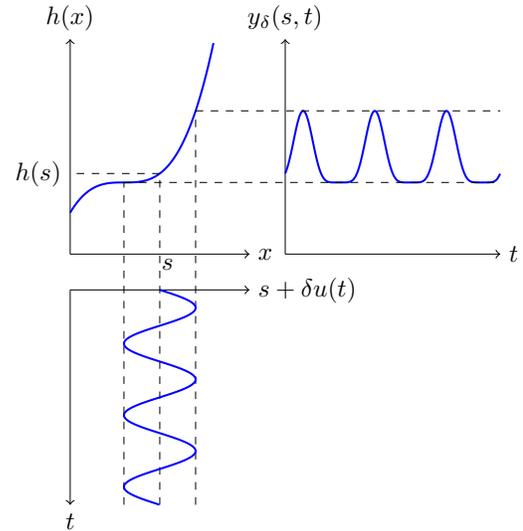
\begin{figure}
	\centering
	\resizebox{0.80\columnwidth}{!}{$
	\begin{tikzpicture}
		\draw[->](0,0) -- (2.5,0) node[right]{$x$};
		\draw[->](0,0) -- (0,3) node[above]{$h(x)$};
		\draw[samples = 100, scale=1, domain=0:2,smooth,variable=\s, blue, thick] plot({\s},{1+(\s-0.75)^3});
		\draw[dashed] ({1.25},{1+(1.25-0.75)^3}) -- (0,{1+(1.25-0.75)^3}) node[left]{$h(s)$};
		\def\xs{3}
		\draw[->](\xs,0) -- (\xs+3,0) node[right]{$t$};
		\draw[->](\xs,0) -- (\xs,3) node[above]{$y_\delta(s,t)$};
		\draw[samples = 100, scale=1, domain=0:3,smooth,variable=\s, blue, thick] plot({\xs + \s},{1+(1.25+0.5*sin(\s*360)-0.75)^3});
		\draw[dashed] ({1.25-0.5},{1+(1.25-0.5-0.75)^3}) -- ({\xs + 3},{1+(1.25-0.5-0.75)^3});
		\draw[dashed] ({1.25+0.5},{1+(1.25+0.5-0.75)^3}) -- ({\xs + 3},{1+(1.25+0.5-0.75)^3});
		\def\ys{-0.5}
		\draw[->](0,\ys) -- (2.5,\ys) node[right]{$s + \delta u(t)$};
		\draw[->](0,\ys) -- (0,-3+\ys) node[below]{$t$};
		\draw[dashed, -] (1.25,-3+\ys) -- (1.25,{1+(1.25-0.75)^3}) node[right, pos = 0.725, xshift = -0.1cm]{$s$};
		\draw[dashed, -] (1.25-0.5,-3+\ys) -- (1.25-0.5,{1+(1.25-0.5-0.75)^3});
		\draw[dashed, -] (1.25+0.5,-3+\ys) -- (1.25+0.5,{1+(1.25+0.5-0.75)^3});
		\draw[samples = 100, scale=1, domain=0:3,smooth,variable=\s, blue, thick] plot({1.25+0.5*sin(\s*360)},{-\s+\ys});
	\end{tikzpicture}
$}
	\caption{Graphical representation of the periodic behaviour of $y_\delta(s,t)$. Conceiving $s$ as a parameter, we see that the oscillation induced by $u(t)$ is elaborated by the non-linear map $h(\cdot)$ and results in a periodic function of time (top-right). This latter can be described through its Fourier coefficients $a_k(s)$ and $b_k(s)$.}
	\label{fig:Fourier}
\end{figure}

{The smoothness of $h(\cdot)$ in Assumption 1 is essentially asked to guarantee the existence  of the Fourier series of the function $y_\delta$\footnote{Milder regularity properties guaranteeing the existence of the series could be assumed.}\ifthenelse{\boolean{CONF}}{}{(plus other regularity properties used in the proof of the forthcoming Lemma \ref{lemma:Reduced1}  and Proposition \ref{prop:BasicES})}. Then, s}ince $y_\delta(x_a, t)$ and its time derivatives are continuous and periodic, $y_\delta(x_a, t)$ can be expressed in terms of {its} Fourier series  as 
	\begin{equation}
		\label{eq:FourierSum}
		\begin{aligned}
			&y_\delta(x_a, t)= \dfrac{a_{0,\delta}(x_a)}{2} + \\
			&\sum_{k=1}^{\infty}  a_{k,\delta}(x_a) \cos\left(k 2\pi t\right) + b_{k,\delta}(x_a) \sin\left(k 2\pi  t\right)
		\end{aligned}
	\end{equation}
	where
	\begin{equation}
		\label{eq:FourierCoeff}
		\begin{aligned}
			a_{k, \delta}(x_a) &:= 2 \int_{0}^{1} y_\delta(x_a,t)\cos(k 2\pi  t)\,dt\\
			b_{k,\delta}(x_a) &:= 2 \int_{0}^{1} y_\delta(x_a,t)\sin(k 2\pi  t)\,dt\,.
		\end{aligned}
	\end{equation}
     Embedding (\ref{eq:FourierSum}) in (\ref{eq:average}) it is immediately seen that the average system linked to (\ref{eq:BasicES}) reads as 
     \begin{equation} \label{eq:average-Fourier}
      \dot x_a = -{\gamma \over 2} b_{1,\delta}(x_a)\,.
     \end{equation}
  For this system the following result holds. In the result, we refer to the class-${\cal K}_\infty$ function $\underline{\delta}^\star(\cdot)$  defined as
  \begin{equation}
  	\label{eq:deltastar}
  \underline{\delta}^\star(s) := 2\int_{0}^{1/2} \alpha(s \sin(2\pi t))\,dt  
  \end{equation}

\begin{lemma} \label{lemma:Reduced1}
Let $h(\cdot)$ be such that Assumptions \ref{hyp:Existence}-\ref{hyp:Unicity} are satisfied. Then:
\begin{itemize}
\item[a)] for all positive $\delta$ \ifthenelse{\boolean{CONF}}{}{and $A$ such that $\underline{\delta}^\star(\delta) \geq A+ \bar b$ for some $\bar b>0$,} there exists a compact set ${{\cal A}_\delta} \subseteq [ x^\star-\delta, x^\star+\delta]$ that is globally asymptotically and locally exponentially stable for (\ref{eq:average-Fourier}).
\item[b)] There exists a $\bar \delta^\star>0$ such that, for all positive $\delta$ \ifthenelse{\boolean{CONF}}{{such that  $\delta \leq \bar \delta^\star$}}{and $A$ such that  $\delta \leq \bar \delta^\star$ and $\underline{\delta}^\star(\delta) \geq A + \bar b$ for some $\bar b>0$}, there exists an equilibrium point $x_{a \delta}^\star \in \mbox{\em int } {{\cal A}_\delta}$ that is  
 locally exponentially stable for system (\ref{eq:average-Fourier}). If, in addition,  the function $\bar{h}(s) := h(x^\star+s)-h(x^\star)$ is even, then $x_{a \delta}^\star = x^\star$.
\end{itemize}
\end{lemma}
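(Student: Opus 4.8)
My plan is to reduce both statements to a sign/monotonicity study of the single coefficient $b_{1,\delta}$ that drives the scalar average dynamics \eqref{eq:average-Fourier}. The first step is to exploit the half-period antisymmetry of the dither: in \eqref{eq:FourierCoeff} with $k=1$, substituting $t\mapsto t+\tfrac12$ on the subinterval $[\tfrac12,1]$, where $\sin(2\pi t)$ changes sign, collapses the coefficient into the symmetrised form
\[
b_{1,\delta}(x_a)=2\int_0^{1/2}\bigl[h(x_a+\delta\sin(2\pi t))-h(x_a-\delta\sin(2\pi t))\bigr]\sin(2\pi t)\,dt .
\]
This is the workhorse of the proof: the sign of $b_{1,\delta}(x_a)$ is dictated by the increment of $h$ across a window symmetric about $x_a$, weighted by the nonnegative factor $\sin(2\pi t)$.

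For part (a) I would first sign $b_{1,\delta}$ outside the target interval. When $x_a\ge x^\star+\delta$ both sampled points $x_a\pm\delta\sin(2\pi t)$ lie in $[x^\star,\infty)$, where by Assumption \ref{hyp:Unicity} the strictly quasi-convex $m$ is strictly increasing with modulus $\alpha$; splitting the increment at $x_a$ gives $m(x_a+\delta s)-m(x_a-\delta s)\ge 2\alpha(\delta s)$ with $s=\sin(2\pi t)$, while the sandwich $m-A\le h\le m+A$ degrades this by a term controlled by $A$. Integrating against the weight and comparing with the definition \eqref{eq:deltastar} of $\underline{\delta}^\star$ produces a bound of the form $b_{1,\delta}(x_a)\ge c\,\bigl(\underline{\delta}^\star(\delta)-A\bigr)$ for a positive constant $c$, so that the hypothesis $\underline{\delta}^\star(\delta)\ge A+\bar b$ forces $b_{1,\delta}(x_a)>0$; the mirror argument gives $b_{1,\delta}(x_a)<0$ for $x_a\le x^\star-\delta$. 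Consequently the scalar flow \eqref{eq:average-Fourier} points strictly inward on $\{\,|x_a-x^\star|=\delta\,\}$, so $[x^\star-\delta,x^\star+\delta]$ is forward invariant and globally attractive. I would then take $\mathcal{A}_\delta$ to be the compact, nonempty attractor contained in this interval; since a scalar flow is monotone, every trajectory is bounded and converges to $\mathcal{A}_\delta$, yielding global asymptotic stability. Local exponential stability follows from a transversality check, $b_{1,\delta}'>0$ at the outer zeros bounding $\mathcal{A}_\delta$, which is obtained by rewriting $b_{1,\delta}$ through one integration by parts as $2\delta\int_0^1 h'(x_a+\delta\sin(2\pi t))\cos^2(2\pi t)\,dt$ (and $b_{1,\delta}'$ analogously with $h''$) and invoking the threshold.

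For part (b), restricting to $\delta\le\bar\delta^\star$, the strict inward sign at $x^\star\pm\delta$ places every equilibrium strictly inside $(x^\star-\delta,x^\star+\delta)$, and the intermediate value theorem guarantees at least one. To upgrade it to a locally exponentially stable equilibrium $x_{a\delta}^\star$ in the interior of $\mathcal{A}_\delta$, I would use the small-amplitude expansion of the by-parts representation, $b_{1,\delta}(x_a)=\delta h'(x_a)+o(\delta)$ and $b_{1,\delta}'(x_a)=\delta h''(x_a)+o(\delta)$; for $\delta$ below the threshold $\bar\delta^\star$ the first-order term dominates, so a zero of $b_{1,\delta}$ near a nondegenerate interior minimiser of $h$ is transversal with $b_{1,\delta}'>0$, hence exponentially stable. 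Finally, if $\bar h(s)=h(x^\star+s)-h(x^\star)$ is even then $h(x^\star+\delta s)=h(x^\star-\delta s)$, so the bracket in the symmetrised representation vanishes identically at $x_a=x^\star$; thus $b_{1,\delta}(x^\star)=0$ for every $\delta$ and $x^\star$ is itself the equilibrium, giving $x_{a\delta}^\star=x^\star$.

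The symmetrisation and the monotone-flow arguments are routine. The two delicate points are (i) converting the weighted increment estimate into the clean threshold $\underline{\delta}^\star(\delta)\ge A+\bar b$, which requires bounding the corruption caused by the $A$-deep local minima uniformly over the sampling phase $t$ — this is precisely where the modulus $\alpha$ and the function \eqref{eq:deltastar} are needed — and (ii) certifying the strict sign $b_{1,\delta}'>0$ at the relevant equilibrium for the exponential claims, since strict quasi-convexity by itself does not rule out a degenerate zero; here the smoothness of $h$ and, in part (b), the smallness of $\delta$ carry the argument. I expect (i), the uniform control of the local-minimum contribution near $x^\star$, to be the main obstacle.
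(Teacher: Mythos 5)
Your sign analysis for part (a) is essentially the paper's own: decompose $h=m+(h-m)$, kill the constant term $m(x)$ against the dither, bound the residual $|h-m|\le A$, symmetrise the remaining integral over half-periods, apply point 2) of Assumption \ref{hyp:Unicity} on each side of $x^\star$, and compare with \eqref{eq:deltastar} to obtain $b_{1,\delta}(x)/2\ge \underline{\delta}^\star(\delta)-A\ge\bar b$ for $x\ge x^\star+\delta$ (and the mirror bound), whence forward invariance and global attractivity of $[x^\star-\delta,x^\star+\delta]$. The even-symmetry conclusion at the end of part (b) is also identical. The by-parts identity $b_{1,\delta}(x)=2\delta\int_0^1 h'(x+\delta\sin(2\pi t))\cos^2(2\pi t)\,dt$ is a nice compact substitute for the paper's Taylor expansion of $\partial b_{1,\delta}/\partial x$, and gives the same first-order picture $b_{1,\delta}\approx\delta h'$, $b_{1,\delta}'\approx\delta h''$.

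The genuine gap is the exponential-stability step in part (a). You identify ${\cal A}_\delta$ with the attractor and claim $b_{1,\delta}'>0$ at its \emph{outer zeros}. Nothing in Assumptions \ref{hyp:Existence}--\ref{hyp:Unicity} yields that transversality: $h$ is merely smooth (not analytic) and, when $A>0$, may have plateaus and local minima inside $(x^\star-\delta,x^\star+\delta)$, so $b_{1,\delta}$ can vanish tangentially at the boundary of its zero set, or on an entire subinterval. In that case the omega-limit set itself is \emph{not} locally exponentially stable (for instance, if $b_{1,\delta}(x)\sim(x-x_0)^2$ to the right of the outermost zero $x_0$, the distance decays only polynomially), so the step as written fails, and your closing remark that ``smoothness of $h$ carries the argument'' is not correct. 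The lemma, however, only asserts existence of \emph{some} compact ${\cal A}_\delta\subseteq[x^\star-\delta,x^\star+\delta]$, and the paper exploits exactly this freedom: it takes ${\cal A}_\delta$ to be a compact set \emph{containing} $\omega(\bar\Delta)$, invoking Lemma 1 of \cite{Marconi2010Robust} (see also \cite{marconi2008essential}). The mechanism is that a slight enlargement of the attractor places its boundary in a region where $|b_{1,\delta}|$ is bounded below on compact sets, so the distance to the enlarged set obeys a linear differential inequality and decays exponentially, with no nondegeneracy of the equilibria required. Your proof needs this enlargement (or the citation) to go through. A smaller instance of the same problem affects part (b): you invoke a ``nondegenerate interior minimiser'', which is not among the hypotheses; the paper instead extracts $\left.\partial^2 h/\partial s^2\right|_{x^\star}=\bar c>0$ from Assumption \ref{hyp:Unicity} and uses continuity of $h''$ to make $b_{1,\delta}$ strictly increasing on a $\delta$-independent neighbourhood of $x^\star$, which simultaneously gives existence, uniqueness, interiority, and transversality of the equilibrium for all $\delta\le\bar\delta^\star$.
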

\ifthenelse{\boolean{CONF}}{}{The Lemma is proved in Appendix \ref{sec:ProofLemmaReduced1}.}
	
Item a) of the previous lemma {states} that the trajectories of the average system reach a compact set ${\cal A}_\delta$ that is contained in a $\delta$ neighbourhood of $x^\star$ for all possible $\delta$\ifthenelse{\boolean{CONF}}{}{ that are sufficiently  large with respect to the ''depth'' $A$ of the local minima in a global way}. This, in particular, implies that there exists a class $\cal  K L$ function $\beta(\cdot, \cdot): \mathbb{R} \times \mathbb{R}_+ 	\to \mathbb{R}_+$ such that 
\[
|x_a(t)|_{{\cal A}_\delta} \leq \beta(|x_a(0)|_{{\cal A}_\delta}, t)\,
\]
where $|\cdot|_{\mathcal{A}_\delta}$ denotes the distance to the set $\mathcal{A}_\delta$. 
\ifthenelse{\boolean{CONF}}{}{In case $A=0$, namely only saddle points {can be} present, then $\delta$ is only required to be positive. Otherwise, $\delta$ must be taken sufficiently large to not get stuck in local minima.  }      
Item $b)$ claims that the set ${\cal A}_\delta$ collapses to an equilibrium point if $\delta$ is also taken sufficiently small\ifthenelse{\boolean{CONF}}{}{, besides being, as before,  sufficiently large according to $A$. which could require Assumption 2 to hold with a sufficiently small $A$}. Moreover, the last point of item b) shows that $x^\star$ represents the equilibrium point only for  cost functions that are  locally symmetric around the optimum. 

Standard averaging results can be then used to show that {the} same property is preserved also for the trajectories of the original system (\ref{eq:BasicES}) for sufficiently small $\gamma$ but in a semi-global and practical way. This is detailed in the next Proposition \ref{prop:BasicES} where we refer to the class ${\cal K}_\infty$ function $\chi(s)$ defined as  
\[
\chi(s):= \beta^{-1}(s, 0). 
\]
In the following analysis we  denote by $L_r>0$ and $M_r>0$, {respectively} the local Lipschitz constant and the upper bound  of the function $h(\cdot)$ on a closed interval of length  $r$. In particular, regularity of $h$ implies that   
\begin{itemize}
	\item for all $r>0$ there exists  $L_r > 0$ such that for all $x_1,x_2 \in [x^\star-r,\,x^\star + r]$
	\begin{equation}
		\label{eq:Lr}
		|h(x_1)-h(x_2)|\le L_r |x_1-x_2|\,.
	\end{equation}
	\item for all $r>0$, there exists $M_r > 0$ such that for all $x \in [x^\star-r,\,x^\star + r]$ 
	\begin{equation}
		\label{eq:Mr}
	|h(x)| \leq M_r \,.  
	\end{equation}
\end{itemize}

\begin{prop}
	\label{prop:BasicES}
Let $h(\cdot)$ be such that Assumptions \ref{hyp:Existence}-\ref{hyp:Unicity} hold and let $r,\delta, d$ be arbitrary positive numbers such that $r - d- 2\delta >0$\ifthenelse{\boolean{CONF}}{}{ and $\underline{\delta}^\star(\delta)\geq A + \bar b$ for some $\bar b>0$}. Let $r_0 :=\chi(r-d-2\delta)$. There exist   $\bar t(r_0,d)$  and  $\gamma^\star(M_r,L_r,\delta, d) > 0$ such that for any $\gamma \in (0,\,\gamma^\star) $, any $x_0 \in \mathbb{R}\,:\,|x_0-x^\star| \le r_0$,  the trajectories of \eqref{eq:BasicES} are  bounded and
	\[
	 |x(t)|_{{\cal A}_\delta} \le d \qquad \forall \, t \geq {\bar t \over \gamma}\,.
	\]
 \end{prop}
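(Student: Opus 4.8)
The plan is to read \eqref{eq:BasicES} as a slow, time-periodic system $\dot x = \gamma\,g(x,t)$ with $g(x,t):=-y_\delta(x,t)\,u(t)$ one-periodic in $t$, whose average is exactly the right-hand side of \eqref{eq:average-Fourier}, i.e.\ $\bar g(x)=-\tfrac12 b_{1,\delta}(x)$. Passing to the slow time $s=\gamma t$, the averaged dynamics becomes the $\gamma$-free flow $\mathrm{d}x_a/\mathrm{d}s = -\tfrac12 b_{1,\delta}(x_a)$, for which Lemma~\ref{lemma:Reduced1}(a) furnishes a compact set $\mathcal{A}_\delta\subseteq[x^\star-\delta,x^\star+\delta]$ that is globally asymptotically and \emph{locally exponentially} stable; this yields the class-$\mathcal{KL}$ estimate $|x_a(s)|_{\mathcal{A}_\delta}\le\beta(|x_a(0)|_{\mathcal{A}_\delta},s)$ already recorded in the excerpt, and the local-exponential part supplies the robustness margin I will need below. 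The conclusion will then follow from a standard \emph{semi-global practical} averaging theorem (e.g.\ \cite{khalil2002nonlinear}, \S10.4, in its set-stability form, or the infinite-horizon closeness-of-solutions results of the averaging literature): if the averaged flow has a compact robustly asymptotically stable set, then for $\gamma$ small the trajectories of the original system track the averaged ones up to an arbitrarily small practical offset, uniformly on $[0,\infty)$, with the slow-time decay translating into convergence times of order $1/\gamma$.

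Next I would verify the hypotheses of that theorem on a fixed compact interval and use the budget $r-d-2\delta>0$ to keep trajectories inside it. Since $u(t)\in[-1,1]$, the dithered argument $x+\delta u(t)$ lies in $[x^\star-r,x^\star+r]$ whenever $x\in J:=[x^\star-(r-\delta),x^\star+(r-\delta)]$; on $J$ the bounds \eqref{eq:Lr}--\eqref{eq:Mr} give $|g(x,t)|\le M_r$ and $|g(x_1,t)-g(x_2,t)|\le L_r|x_1-x_2|$ uniformly in $t$, which are precisely the boundedness and Lipschitz requirements of the theorem and fix the dependence $\gamma^\star=\gamma^\star(M_r,L_r,\delta,d)$. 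For the initial data, $\mathcal{A}_\delta\subseteq[x^\star-\delta,x^\star+\delta]$ gives $|x_0|_{\mathcal{A}_\delta}\le|x_0-x^\star|+\delta\le r_0+\delta$, and since $\chi=\beta^{-1}(\cdot,0)$ the choice $r_0=\chi(r-d-2\delta)$ is what controls the peak averaged excursion $\beta(|x_0|_{\mathcal{A}_\delta},0)$ by (essentially) $r-d-2\delta$, the slack $2\delta$ absorbing both the radius of $\mathcal{A}_\delta$ and the dither shift. Choosing the practical offset $\nu$ and $\gamma^\star$ small enough that the averaged peak, the offset $\nu$, and the attractor radius $\delta$ together remain below $r-\delta$, the closeness estimate keeps $x(t)\in J$ for all $t$, so the trajectory is bounded and the constants $L_r,M_r$ stay legitimate along the whole orbit. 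Finite-time entry into the $d$-ball then comes from the $\mathcal{KL}$ decay: one selects $\bar t=\bar t(r_0,d)$ with $\beta(r_0+\delta,\bar t)+\nu\le d$, and because the slow time is $s=\gamma t$ this is reached for $t\ge\bar t/\gamma$.

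The delicate point, on which I would spend the most care, is the passage from finite-horizon to infinite-horizon closeness \emph{simultaneously} with the forward-invariance of $J$. Elementary averaging only delivers $O(\gamma)$ closeness on intervals of length $O(1/\gamma)$, which is not enough to assert a bound for all $t\ge\bar t/\gamma$; moreover the estimates $L_r,M_r$ are valid only while the state remains in $J$, so boundedness and closeness cannot be proved separately but must be bootstrapped. The resolution is to exploit the local-exponential (hence total/robust) stability of $\mathcal{A}_\delta$ from Lemma~\ref{lemma:Reduced1}(a): the averaged set persists under the $O(\gamma)$ perturbation represented by the averaging error, so a continuity/invariance argument—show $J$ cannot be exited on the first slow-time window, re-initialise, and iterate, with the $\mathcal{KL}$ bound both preventing escape and driving the state into the $d$-ball—upgrades the estimate to all $t\ge0$ and produces the stated convergence at $t\ge\bar t/\gamma$. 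Everything else, namely the explicit smallness condition defining $\gamma^\star(M_r,L_r,\delta,d)$ and the computation of $\bar t(r_0,d)$, is a routine quantification of these steps.
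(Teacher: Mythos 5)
Your proposal is correct and takes essentially the same route as the paper: a finite-horizon averaging/closeness step on a compact set where $M_r$ and $L_r$ are valid (the paper carries this out via the periodic-error/Gronwall computation behind its own averaging lemma, with $\gamma^\star$ depending on $M_r,L_r,\delta,d$ exactly as you describe), followed by the same windowed re-initialisation argument that exploits the local exponential stability of ${\cal A}_\delta$ from Lemma~\ref{lemma:Reduced1}(a) to bootstrap forward invariance and extend the bound to all $t \ge \bar t/\gamma$. Your bookkeeping of the budget $r-d-2\delta$ (one $\delta$ for the radius of ${\cal A}_\delta$, one for the dither shift, $d$ for the tracking error) and your identification of the finite-to-infinite horizon passage as the delicate point coincide with the paper's proof.
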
 

Appendix \ref{app:Detailed_BasicESNotGlobal_R} details the proof of this Proposition.
	
 An immediate consequence of the previous result is the next corollary showing that under Assumptions \ref{hyp:Existence}-\ref{hyp:Unicity} system (\ref{eq:BasicES}) solves the problem of semi-global extremum seeking formulated before\ifthenelse{\boolean{CONF}}{}{ provided that Assumption \ref{hyp:Unicity} is fulfilled with a sufficiently small $A$}.
\begin{corollary}
	\label{prop:CorollaryBasicES}
Let $h(\cdot)$ be such that Assumptions \ref{hyp:Existence}-\ref{hyp:Unicity} are fulfilled and let $r_0$ and $\epsilon$ be arbitrary positive numbers. Then, there exist $\bar t(r_0, \epsilon)>0$,  $\bar \delta^\star(\epsilon)> 0$ and  $\gamma^\star(r_0,\bar{\delta}^\star, \epsilon) > 0$ such that for any  $\delta \in (0, \bar \delta^\star)$\ifthenelse{\boolean{CONF}}{}{ and $\underline{\delta}^\star(\delta) \geq A + \bar b$ for some $\bar b>0$}, any $\gamma \in (0,\,\gamma^\star) $ and any $x_0 \in \mathbb{R}\,:\,|x_0-x^\star| \le r_0$,  the trajectories of \eqref{eq:BasicES} are  bounded and
	\[
	 |x(t) - x^\star|  \le \epsilon \qquad \forall \, t \geq {\bar t \over \gamma}\,.
	\]
 \end{corollary}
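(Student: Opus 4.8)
The plan is to read the corollary off Proposition~\ref{prop:BasicES} together with the geometric information in item a) of Lemma~\ref{lemma:Reduced1}; the only genuine work is a careful choice of the free parameters $r,\delta,d$ and a check that the resulting thresholds can be taken independent of $\delta$.

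First I would convert a bound on the distance to $\mathcal{A}_\delta$ into a bound on the distance to $x^\star$. Item a) of Lemma~\ref{lemma:Reduced1} gives $\mathcal{A}_\delta \subseteq [x^\star-\delta,\,x^\star+\delta]$, so every point of $\mathcal{A}_\delta$ lies within $\delta$ of $x^\star$ and the triangle inequality yields
\[
|x-x^\star| \le |x|_{\mathcal{A}_\delta} + \delta \qquad \forall\, x \in \mathbb{R}.
\]
Hence any trajectory with $|x(t)|_{\mathcal{A}_\delta} \le d$ automatically satisfies $|x(t)-x^\star| \le d+\delta$, and it suffices to arrange $d+\delta \le \epsilon$. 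Observe that only the containment of item a) is used; item b), which collapses $\mathcal{A}_\delta$ to an equilibrium, is not needed for the merely practical statement sought here.

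Next comes the parameter selection. Given $\epsilon$ and $r_0$, set $\bar\delta^\star(\epsilon):=\epsilon/2$ and, for each admissible $\delta\in(0,\bar\delta^\star)$ (i.e.\ with $\underline{\delta}^\star(\delta)>A$), put $d:=\epsilon/2$, so that $d+\delta<\epsilon$. To match the standing relation $r_0=\chi(r-d-2\delta)$ of Proposition~\ref{prop:BasicES}, invert the class-$\mathcal{K}_\infty$ function $\chi$ and take $r:=\chi^{-1}(r_0)+d+2\delta$; then $r-d-2\delta=\chi^{-1}(r_0)>0$, so the hypothesis $r-d-2\delta>0$ holds and Proposition~\ref{prop:BasicES} applies. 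Its conclusion $|x(t)|_{\mathcal{A}_\delta}\le d$ for $t\ge \bar t/\gamma$, combined with the first step, gives $|x(t)-x^\star|\le d+\delta\le\epsilon$, and boundedness of the trajectories is inherited directly from the proposition.

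The remaining—and main—difficulty is to make the thresholds uniform in $\delta$, since Proposition~\ref{prop:BasicES} produces $\bar t(r_0,d)$ but a still $\delta$-dependent $\gamma^\star(M_r,L_r,\delta,d)$. With $d=\epsilon/2$ fixed, and using that in one dimension the flow of \eqref{eq:average-Fourier} reaches the interval $\mathcal{A}_\delta$ monotonically (so $|x_a(\cdot)|_{\mathcal{A}_\delta}$ is non-increasing and one may take $\chi$ to be the identity, in particular $\delta$-independent), the radius $r=r_0+\epsilon/2+2\delta$ is bounded above by $\bar r:=r_0+3\epsilon/2$. Then $M_r\le M_{\bar r}$ and $L_r\le L_{\bar r}$ via \eqref{eq:Mr}--\eqref{eq:Lr}, and it remains to show $\inf_{\delta\in(0,\bar\delta^\star)}\gamma^\star(M_r,L_r,\delta,\epsilon/2)>0$. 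I expect this to be the crux: one must inspect how $\gamma^\star$ depends on $\delta$ inside the proof of Proposition~\ref{prop:BasicES}. When $A>0$ the constraint $\underline{\delta}^\star(\delta)>A$ already confines $\delta$ to a range bounded away from $0$, so the infimum is that of a positive continuous function over a relatively compact set and is positive; the delicate case is $A=0$, where $\delta$ may tend to $0$ and the averaged field $b_{1,\delta}$ weakens like $\delta$, and one must verify that the averaging threshold does not collapse with it. Taking $\gamma^\star:=\inf_\delta\gamma^\star(M_{\bar r},L_{\bar r},\delta,\epsilon/2)$ and $\bar t:=\bar t(r_0,\epsilon/2)$ then delivers the required $\delta$-uniform quantities.
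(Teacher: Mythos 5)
Your first two paragraphs are exactly the argument the paper intends: the paper offers no separate proof of Corollary \ref{prop:CorollaryBasicES}, declaring it ``an immediate consequence'' of Proposition \ref{prop:BasicES}, and your instantiation ($d=\epsilon/2$, $\bar\delta^\star=\epsilon/2$, $r=\chi^{-1}(r_0)+d+2\delta$, plus the containment $\mathcal{A}_\delta\subseteq[x^\star-\delta,\,x^\star+\delta]$ from item a) of Lemma \ref{lemma:Reduced1} to convert $|x(t)|_{\mathcal{A}_\delta}\le d$ into $|x(t)-x^\star|\le d+\delta\le\epsilon$) is the correct way to do it. The gap sits in your third paragraph, at the very point you call the crux: for $A=0$ you state that one must verify that the averaging threshold does not collapse as $\delta\to 0$, and then, instead of verifying it, you set $\gamma^\star:=\inf_{\delta}\gamma^\star(M_{\bar r},L_{\bar r},\delta,\epsilon/2)$ and assert that this ``delivers the required $\delta$-uniform quantities.'' That is circular: the positivity of that infimum is precisely what has to be proved, and an infimum of positive numbers over the non-compact interval $(0,\bar\delta^\star)$ can perfectly well be zero.

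Closing the gap is a two-line inspection once Proposition \ref{prop:BasicES} is taken at face value (so that $\bar t=\bar t(r_0,d)$ carries no $\delta$-dependence). Its proof gives $\bar\epsilon(L_r,M_r,\delta)=2L_r\delta+M_r$, $\bar k(L_r,M_r,\delta)=L_r\bar\epsilon+2L_r^2\delta$, and $\gamma^\star=\min\{\gamma_2^\star,\gamma_3^\star,\gamma_4^\star\}$ with $\gamma_2^\star=\bar c/(3e^{L_r\bar t})$, $\gamma_3^\star=\bar c L_r/\bigl(3\bar k(e^{L_r\bar t}-1)\bigr)$ and $\gamma_4^\star=\bar c/(3\bar\epsilon)$; each of these is non-increasing in $L_r$, $M_r$ and $\delta$, so for $\delta\in(0,\bar\delta^\star)$ and $r\le\bar r$ one has the uniform lower bound $\gamma^\star(M_r,L_r,\delta,d)\ge\gamma^\star(M_{\bar r},L_{\bar r},\bar\delta^\star,d)>0$ --- the missing verification. (Your parenthetical claim that $\chi$ can be taken as the identity is also needed here to make $\bar r$ itself $\delta$-free, and it does hold, but only via the forward invariance of $\mathcal{A}_\delta$ together with the monotonicity of scalar autonomous flows; it deserves a line of proof.) A final caveat: your instinct that something degenerates as $\delta\to0$ is sound, but the degeneration hides in $\bar t$, not in the formulas above. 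If one tracks the proposition's proof rather than its statement, $\bar t$ is built from the exponential-stability constants of $\mathcal{A}_\delta$ and the $\mathcal{KL}$ bound $\beta$, which degrade as $\delta\to 0$: for $h(x)=(x-x^\star)^2$ the average system \eqref{eq:average-Fourier} is $\dot x_a=-\gamma\delta(x_a-x^\star)$, so no $\delta$-free $\bar t$ can work and the time bound must blow up like $1/\delta$. That imprecision belongs to the statements of Proposition \ref{prop:BasicES} and of the corollary itself, not to your derivation from them, but it means the full $\delta$-uniformity you were attempting cannot be rescued by any argument that keeps $\bar t$ independent of $\delta$.
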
 
 
By going through the proof of Proposition \ref{prop:BasicES},  it is immediately seen that $\gamma^\star$ is  inversely proportional to $M_r$. As a consequence, the higher the cost function is within the set where $x(t)$ ranges,  the lower the value of $\gamma$ and, in turn, the slower the convergence rate of $x$ to the neighbourhood of the optimum. 
 Section \ref{sec:Global} presents an improvement of \eqref{eq:BasicES} overtaking this limitation\ifthenelse{\boolean{CONF}}{}{ and, in turn, paving the way for a global result}. 

\subsection{{Comments on Taylor expansion-based averaging analyses}}

The {analysis} of \eqref{eq:average} is typically approached, see \cite{tan2006non}, by using a Taylor expansion of $h(\cdot)$ {to obtain} a system of the form  
\begin{equation}
	\label{eq:av_complete}
	\dot{{x}}_a = \, -\gamma 
	c_1 \delta  \left.\dfrac{\partial h}{\partial x}\right|_{{x}_a} -\gamma 
	\sum_{k=2}^{\infty} c_k \,{\delta^{2k-1}}\,
	\left.\dfrac{\partial^{2k-1}h}{\partial x^{2k-1}} \right|_{{x}_a} 
\end{equation}
where $c_k$ are suitably defined  positive coefficients. A key role in the study of this system is  played by the first-order approximation  
\begin{equation}
	\label{eq:First_Order}
	\dot{{x}}_a = \, -\gamma 
	c_1 \delta  \left.\dfrac{\partial h}{\partial x}\right|_{{x}_a}\,. 
\end{equation}
In fact, if {our} Assumption \ref{hyp:Unicity} is strengthen  by asking that{, for any $x \ne x^\star$, it holds} $(\partial h(x)/\partial x)(x-x^\star)> 0$ (respectively $>\alpha(|x-x^\star|)$ with $\alpha(\cdot)$ a class-${\cal K}$ function, see  \cite{tan2006non}, Assumptions 3 and 4){, then the Lyapunov arguments of [\cite{tan2006non}, eq. (45)]} demonstrate $x^\star$ to be a stable (respectively globally asymptotically stable) equilibrium point for (\ref{eq:First_Order}). These stability properties are transferred to \eqref{eq:average} for sufficiently small $\delta$. Then, averaging techniques \cite{khalil2002nonlinear} can be used to prove that{, for sufficiently small $\gamma$,} the trajectories of \eqref{eq:BasicES} {and \eqref{eq:average}} 
 remain arbitrarily close. 
  Namely, the semi-global extremum seeking problem is solved.   


The fact that the asymptotic properties of the average system \eqref{eq:average} are just ensured by the first-order term \eqref{eq:First_Order}, and thus by the gradient of $h$, implies that isolated local \ifthenelse{\boolean{CONF}}{}{minima, or even }saddle points, of $h$, {cannot be handled by that proof technique}. This justifies why [\cite{tan2006non}, Assumption 3], which is stronger than {our} Assumption \ref{hyp:Unicity}, is needed.  Furthermore, we observe that the previous analysis requires that the dither amplitude is kept sufficiently small {for} the higher-order terms of the average dynamics to be negligible.  

\section{{The High-Pass Filter (HPF)}-ES Algorithm}
\label{sec:Global}
\begin{figure}[t]
	\centering
	\begin{tikzpicture}
		\node[draw](map){$h(s)$};
		\node[draw, right of = map, circle, xshift = 1.5cm](sumy){$+$};
		\node[dashed, draw, above of = map, xshift = 1.5cm](Mean){Eq. \eqref{eq:bary}};
		\draw[dashed, ->] (map) -| (Mean); 
		\draw[dashed, ->] (Mean) -| (sumy) node[pos = 0.5, right]{$-\bar{y}$};
		\node[draw, below of = map, xshift = 0cm, yshift = -0.0cm](ES){$\dot{{x}} = -\gamma v$};
		\node[draw, right of = ES, circle, xshift = 1.5cm](times){$\times$};
		\node[draw, left of = ES, circle, xshift = -1.5cm](sum){$+$};
		\draw[->](map) -- (sumy);
		\draw[->](times) --  node[pos= 0.5, above]{$v$} (ES);  
		\draw[->](ES) -- node[pos= 0.5, above]{${x}$} (sum);
		\draw[->](sum) |- node[pos= 0.5, above]{$s$} (map);
		\node[below of = ES](dither){$u$};
		\draw[->] (sumy) -- (times);
		\node[triangle, left of = dither, shape border rotate=180, xshift = -0.25cm](delta){$\delta$};
		\draw[->](dither) -|  (times);
		\draw[->](dither) -- (delta);
		\draw[->](delta) -| (sum);
	\end{tikzpicture}
	\caption{The HPF-ES scheme consists of an extension of the classic ES algorithm with the addition of the dashed block.}
	\label{fig:GloablES}
\end{figure}
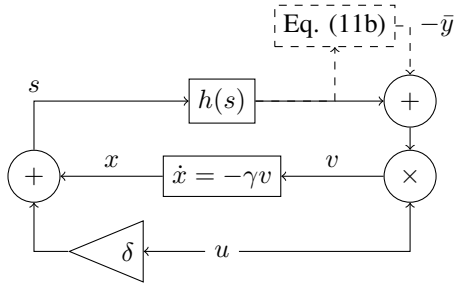

In \cite{tan2006non} the authors proposed the {next} modification of \eqref{eq:BasicES} 
\begin{subequations} 
	\label{eq:GlobalESGeneric}
	\begin{align}
		\label{eq:GlobalESGeneric_dotx}
		\dot x =&\,- \gamma  \left ( y_\delta({x},t)-\bar{y} \right ) \,u(t) &&   {x}(0)={x}_0\\
		\dot{\bar{y}}  = &\, \gamma\left (y_\delta({x},t)-\bar{y}\right ) &&   \bar{y}(0)=\bar{y}_0 \label{eq:bary}
	\end{align}
\end{subequations}
where $u(t)$ is the dither signal defined before, and $(x,\bar{y}) \in \mathbb{R} \times \mathbb{R}$. {A block representation of \eqref{eq:GlobalESGeneric} is depicted in Figure \ref{fig:GloablES}.}

The intuition behind the previous scheme is to interpret $y_\delta({x},t)-\bar y$ as the output of a high pass filter of  $y_\delta$. Moreover, the difference $y_\delta({x},t)-\bar{y}$ represents a proxy  of the local \textit{mean} variation of $h(x)$, directly proportional to the \textit{mean} local Lipschitz constant. In the following, we show how this feature guarantees that the upper bound for the value of $\gamma$ is not dependent on $M_r$ {(eq. \eqref{eq:Mr})} but rather only on $L_r$ {(\eqref{eq:Lr})}. As for $\gamma$, similarly to \eqref{eq:BasicES}, it must be small to let $x$ and $\bar{y}$ be sufficiently slow to preserve the correlation between the oscillations of $y_\delta({x},t)-\bar{y}$ and those of $u(t)$.
The average system of \eqref{eq:GlobalESGeneric} is defined as 
\begin{subequations}
	\label{eq:TildeAV}
	\begin{align}
		\label{eq:DotTildeXAv}
		\dot{{x}}_a &=-\gamma  \int_{0}^{1} \left ( y_\delta({x_a},\tau)-\bar{y}_a \right ) \, u(\tau)\,d\tau\\
		\label{eq:baryAV}
		\dot{\bar{y}}_a & = -\gamma\,\bar{y}_a +  \gamma\int_{0}^{1}y_\delta ({x}_a,\tau)\,d\tau.
	\end{align}
\end{subequations}

By expanding  $y_\delta(x_a, t)$ in terms of the Fourier series as in the previous section, it turns out that   
\begin{subequations} 
	\label{rho}
	\begin{align}
		\displaystyle \int_{0}^{1}y_\delta(x_a,\tau)\,d\tau &= \dfrac{a_{0,\delta}(x_a)}{2}\\
		\displaystyle \int_{0}^{1}\left ( y_\delta({x_a},\tau)-\bar{y}_a \right )  u(\tau)\,d\tau  &= \dfrac{b_{1,\delta}(x_a)}{2}
	\end{align}
	where in the latter we exploited $\int_{0}^{1}\bar{y}_a u(\tau) d\tau = 0$ and \eqref{eq:FourierCoeff}. 
\end{subequations}

Hence, the average system reads as 
\begin{subequations}
	\label{eq:TildeAV_2}
		\begin{align}
			\label{eq:DotTildeXAv_2} 
		\dot{{x}}_a &=-\gamma\dfrac{ b_{1,\delta}(x_a)}{2 }\\
		\label{eq:DotBarYAV}		
		\dot{\bar{y}}_a & = -\gamma\, \bar{y}_a + \gamma\, \dfrac{a_{0, \delta}({x}_a)}{2}
	\end{align}
\end{subequations}
which is a cascade where the first subsystem coincides with \eqref{eq:average-Fourier} and the second subsystem is linear and asymptotically stable. From this, the next result follows from  Lemma  \ref{lemma:Reduced1}.

\begin{lemma} \label{lemma:Reduced2}
	Let $h(\cdot)$ be such that Assumptions \ref{hyp:Existence}-\ref{hyp:Unicity} are satisfied. Then:
	\begin{itemize}
		\item[a)] for {any} positive $\delta> 0$\ifthenelse{\boolean{CONF}}{}{ and $A> 0$ such that $\underline{\delta}^\star(\delta) \geq A+ \bar b$ for some $\bar b>0$,} there exist a compact set ${\cal A}_\delta \subseteq [ x^\star-\delta, x^\star+\delta]$ and a continuous function $\tau: \mathbb{R} 	\to \mathbb{R}$ such that the set
		\[
		 \mbox{\rm graph} \left. \tau \right |_{{\cal A}_\delta} = \{ (x_a, y_a) \in {\cal A}_ \delta \times \mathbb{R} \; : \; y_a = \tau(x_a)\}
		\]
 is globally asymptotically and locally exponentially stable for \eqref{eq:TildeAV_2}.
		\item[b)] There exists $\bar \delta^\star>0$ such that, for {any} $\delta \in (0, \bar \delta^\star )$\ifthenelse{\boolean{CONF}}{}{ and any $A> 0$ such that $\underline{\delta}^\star(\delta) \geq A+ \bar b$ for some $\bar b>0$}, there exists an equilibrium point $(x_{a \delta}^\star, \bar y_{a \delta}^\star ) \in  \mbox{\rm graph} \left. \tau \right |_{{\cal A}_\delta}$ that is  globally asymptotically and locally exponentially stable for system (\ref{eq:average-Fourier}). If, in addition,  the function $\bar{h}(s) := h(x^\star+s)-h(x^\star)$ is even, then $x_{a \delta}^\star = x^\star$.
	\end{itemize}
\end{lemma}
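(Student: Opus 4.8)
The plan is to exploit the cascade structure of \eqref{eq:TildeAV_2} already highlighted in the text: the $x_a$-subsystem \eqref{eq:DotTildeXAv_2} is \emph{identical} to \eqref{eq:average-Fourier}, so Lemma \ref{lemma:Reduced1} describes its asymptotic behaviour verbatim, while \eqref{eq:DotBarYAV} is a scalar, exponentially stable linear filter driven by the continuous signal $a_{0,\delta}(x_a)$. First I would define the candidate manifold through $\tau(x_a) := a_{0,\delta}(x_a)/2$, the quasi-steady-state of $\bar y_a$ for a frozen $x_a$ (obtained by zeroing the right-hand side of \eqref{eq:DotBarYAV}). Since $h$ is smooth, $a_{0,\delta}(x_a) = 2\int_0^1 h(x_a + \delta u(t))\,dt$ is $C^1$ in $x_a$, so $\tau$ is continuous and differentiable, as required by the statement.

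The second step is the change of variable $z := \bar y_a - \tau(x_a)$, which flattens the candidate graph onto $\{z=0\}$. Differentiating and using $\tau(x_a) = a_{0,\delta}(x_a)/2$ together with \eqref{eq:DotTildeXAv_2} yields
\[
\dot z = -\gamma z + \tfrac{\gamma}{2}\,\tau'(x_a)\,b_{1,\delta}(x_a),
\]
so in the coordinates $(x_a, z)$ the dynamics read $\dot x_a = -\tfrac{\gamma}{2}b_{1,\delta}(x_a)$ and $\dot z = -\gamma z + g(x_a)$ with $g(x_a) := \tfrac{\gamma}{2}\tau'(x_a)b_{1,\delta}(x_a)$. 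The crucial observation is that $g$ vanishes wherever $b_{1,\delta}$ does, hence on the whole attractor $\mathcal{A}_\delta$ (whose points are equilibria of the scalar $x_a$-dynamics). In these coordinates the target set $\mathrm{graph}\,\tau|_{\mathcal{A}_\delta}$ becomes $\mathcal{A}_\delta \times \{0\}$, and it is invariant since $b_{1,\delta}=0$ there forces $\dot x_a = \dot z = 0$.

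For item a) I would prove global attractivity and stability separately. Global attractivity: by Lemma \ref{lemma:Reduced1}a the scalar trajectory $x_a(t)$ is bounded and converges to $\mathcal{A}_\delta$; being a bounded solution of a scalar autonomous ODE it in fact converges to some equilibrium $\bar x \in \mathcal{A}_\delta$, whence $g(x_a(t)) \to 0$ and the globally ISS linear $z$-dynamics give $z(t) \to 0$. Stability I would obtain from the cascade, combining the class-$\mathcal{KL}$ estimate for $|x_a|_{\mathcal{A}_\delta}$ supplied by Lemma \ref{lemma:Reduced1} with the exponentially stable, ISS $z$-subsystem through a standard composition-of-estimates argument, noting that no peaking can occur because the $z$-channel is linear with no finite escape. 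Local exponential stability follows by linearising both channels: Lemma \ref{lemma:Reduced1} gives local exponential contraction towards $\mathcal{A}_\delta$, while $\dot z = -\gamma z + O(|x_a|_{\mathcal{A}_\delta})$ contracts at rate $\gamma$.

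For item b) I would invoke Lemma \ref{lemma:Reduced1}b: for $\delta \le \bar\delta^\star$ the set $\mathcal{A}_\delta$ contains the locally exponentially stable equilibrium $x_{a\delta}^\star$, so $(x_{a\delta}^\star, \bar y_{a\delta}^\star)$ with $\bar y_{a\delta}^\star := \tau(x_{a\delta}^\star) = a_{0,\delta}(x_{a\delta}^\star)/2$ is an equilibrium of \eqref{eq:TildeAV_2} lying on $\mathrm{graph}\,\tau|_{\mathcal{A}_\delta}$, and its stability is the cascade of two locally exponentially stable subsystems; the symmetry claim is immediate, since if $\bar h$ is even then Lemma \ref{lemma:Reduced1}b already gives $x_{a\delta}^\star = x^\star$. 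I expect the main obstacle to be the set-stability bookkeeping in item a) when $A>0$ and $\mathcal{A}_\delta$ is a genuine interval rather than a point: cascade theorems are usually stated for equilibria, so care is needed to establish Lyapunov stability (not merely attractivity) of $\mathrm{graph}\,\tau|_{\mathcal{A}_\delta}$, which I would handle by working directly with the distance $|(x_a,z)|_{\mathcal{A}_\delta\times\{0\}}$ and with the fact that $g$ is bounded by a function of this distance that vanishes on the set.
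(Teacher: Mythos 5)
Your cascade skeleton, the change of variable $z=\bar y_a-\tau(x_a)$, the attractivity argument, and item b) are all sound, but the proof of item a) rests on a claim that is false in general: that every point of $\mathcal{A}_\delta$ is an equilibrium of $\dot x_a=-\tfrac{\gamma}{2}b_{1,\delta}(x_a)$, so that $g(x_a)=\tfrac{\gamma}{2}\tau'(x_a)b_{1,\delta}(x_a)$ vanishes on $\mathcal{A}_\delta$. The set $\mathcal{A}_\delta$ produced by Lemma \ref{lemma:Reduced1} contains the omega-limit set $\omega(\bar\Delta)$ of the whole interval $\bar\Delta=[x^\star-\delta,x^\star+\delta]$, and for a scalar autonomous system this set consists of the equilibria \emph{together with the heteroclinic intervals connecting adjacent equilibria}; on those intervals $b_{1,\delta}\neq 0$. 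This is not a pathology: it is exactly the situation the lemma must cover when $A>0$ and $b_{1,\delta}$ changes sign several times inside $\bar\Delta$ (local minima). At such a point $\xi_0\in\mathcal{A}_\delta$ your quasi-steady-state graph $\bar y_a=a_{0,\delta}(x_a)/2$ satisfies $\dot z(0)=\tfrac{\gamma}{2}\tau'(\xi_0)b_{1,\delta}(\xi_0)\neq 0$ generically, so the graph is not invariant; worse, starting \emph{on} the graph the deviation obeys $z(t)=\tfrac{\gamma}{2}\int_0^t e^{-\gamma(t-s)}\tau'(x_a(s))\,b_{1,\delta}(x_a(s))\,ds$, which can grow to a size of order $\sup_{\mathcal{A}_\delta}|\tau' b_{1,\delta}|/2$, independent of the initial distance to the set. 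Since $\tau$ has bounded slope, the distance to the graph is bounded below by a fixed fraction of $|z(t)|$, so Lyapunov stability of the set fails: your set is attractive but not stable, hence not asymptotically stable as claimed.

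The paper's proof fixes precisely this point by choosing a different $\tau$ (it stresses that $\tau$ is non-unique): not the frozen-$x_a$ quasi-steady state, but the steady-state response of the filter \emph{along the flow},
\[
\tau(x)=\frac{\gamma}{2}\int_{-\infty}^{0}e^{\gamma s}\,a_{0,\delta}\bigl(\hat x_a(s,x)\bigr)\,ds ,
\]
where $\hat x_a(\cdot,x)$ is the flow of $\dot{\hat x}_a=-\tfrac{\gamma}{2}\hat b_{1,\delta}(\hat x_a)$ and $\hat b_{1,\delta}$ is a smooth cut-off extension of $b_{1,\delta}$ (equal to it on $\bar\Delta$, zero outside a superset) so that the backward integral is well defined. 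A variation-of-constants computation plus the semigroup property of the flow gives $\bar y_a(t,\tau(x_{a0}),x_{a0})=\tau(x_a(t,x_{a0}))$ for all $x_{a0}\in\mathcal{A}_\delta$, i.e.\ exact forward invariance of $\mbox{\rm graph}\,\tau|_{\mathcal{A}_\delta}$, after which the cascade/ISS arguments you outline go through. Note that this $\tau$ coincides with your $a_{0,\delta}/2$ at equilibria (so your item b) and the symmetric case are unaffected) but differs from it exactly on the heteroclinic portions of $\mathcal{A}_\delta$, which is where your construction breaks down.
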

\ifthenelse{\boolean{CONF}}{}{
A sketch of the proof of this Lemma, with special regard to the definition of the function $\tau(\cdot)$, is presented in Appendix \ref{sec:ProofLemmaReduced2}.}
 
From this, the following result  mimics the one of Proposition \ref{prop:BasicES} with the remarkable difference that the upper bound $\gamma^\star$ of $\gamma$ is uniform with respect to $M_r$.

\begin{theorem}
	\label{prop:HPFES}
	Let $h(\cdot)$ be such that Assumptions \ref{hyp:Existence}-\ref{hyp:Unicity} are fulfilled and let $r,\delta, d$ be arbitrary positive numbers such that $r - d- 2\delta >0$\ifthenelse{\boolean{CONF}}{{}}{ and $\underline{\delta}^\star(\delta)\geq A + \bar b$ for some $\bar b>0$}. Let $r_0 :=\chi(r-d-2\delta)$. There exist   $\bar t(r_0,d)$ and  $\gamma^\star(L_r,\delta, d) > 0$  such that for any $\gamma \in (0,\,\gamma^\star) $ and any $(x_{0},\,\bar{y}_{0})$ fulfilling
	$|x_{0}-x^\star|\le r_0$ and $|\bar{y}_{0} - a_{0,\delta}(x_{0})/2| \le \gamma^\star$,
then	the trajectories of \eqref{eq:GlobalESGeneric} are  bounded and
	\[
	\|(x(t), \bar y(t))\|_{\mbox{\rm graph} \left. \tau \right |_{{\cal A}_\delta} } \le d \qquad \forall \, t \geq {\bar t \over \gamma}\,.
	\]
\end{theorem}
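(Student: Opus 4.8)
The plan is to read \eqref{eq:GlobalESGeneric} as a periodic averaging problem in the small parameter $\gamma$, with state $z=(x,\bar y)$, $1$-periodic vector field, and average exactly \eqref{eq:TildeAV_2}. Lemma \ref{lemma:Reduced2} supplies what the averaging machinery needs on the reduced model: the set $\mathcal{G}:=\mathrm{graph}\,\tau|_{\mathcal{A}_\delta}$ is globally asymptotically and locally exponentially stable, hence it admits a class-$\mathcal{KL}$ estimate $\|(x_a(t),\bar y_a(t))\|_{\mathcal{G}}\le\beta_2(\|(x_a(0),\bar y_a(0))\|_{\mathcal{G}},\gamma t)$, the slow scale $\gamma t$ coming from the $\gamma$ on the right-hand side of \eqref{eq:TildeAV_2}. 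The argument then has the same two-step shape as the proof of Proposition \ref{prop:BasicES}: (i) with $r_0=\chi(r-d-2\delta)$, I would propagate this estimate to confine the average flow to a compact tube whose $x$-section lies in $[x^\star-r,x^\star+r]$ — the $-d$ and $-2\delta$ margins being exactly what keeps the dithered argument $x+\delta u(t)$ inside that interval, so that $L_r$ and $M_r$ are the operative constants — and to push $\|(x_a,\bar y_a)\|_{\mathcal{G}}$ into a $d/2$-ball after time $\bar t/\gamma$; (ii) bound the averaging mismatch $\|z(t)-z_a(t)\|$ uniformly on that tube below $d/2$ for $\gamma<\gamma^\star$. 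The triangle inequality then yields the assertion, and boundedness follows since the average flow remains in a compact tube.

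The crux — and the only genuine departure from Proposition \ref{prop:BasicES} — is to carry out step (ii) with a threshold $\gamma^\star$ and a mismatch bound depending on $L_r$ but not on $M_r$. The naive averaging transformation $z\mapsto z+\gamma w(z,t)$ with $w(z,t)=\int_0^t[f(z,\tau)-f_{\mathrm{av}}(z)]\,d\tau$ is useless here, because the $x$-component of $w$ carries the term $\int_0^t\bar y\,u(\tau)\,d\tau=\bar y\,(1-\cos 2\pi t)/(2\pi)$, whose size is set by $|\bar y|$, and $\bar y$ tracks $a_{0,\delta}(x)/2\approx h(x)$, which is only bounded by $M_r$. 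I would therefore first pass to the shifted filter coordinate $e:=\bar y-a_{0,\delta}(x)/2$, so that the high-pass output reads $y_\delta(x,t)-\bar y=\bigl(y_\delta(x,t)-a_{0,\delta}(x)/2\bigr)-e$. The first summand is the zero-mean part of $y_\delta$; since $y_\delta(x,t)$ and its time-average are both values of $h$ on $[x-\delta,x+\delta]$, it is bounded by $2L_r\delta$, an $M_r$-free quantity. Everything then hinges on keeping $e$ of the same order.

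For $e$ I would use its exact dynamics $\dot e=-\gamma e+\gamma\bigl(y_\delta-a_{0,\delta}(x)/2\bigr)-\tfrac12 a_{0,\delta}'(x)\dot x$, note that $a_{0,\delta}'(x)=2\int_0^1 h'(x+\delta u(\tau))\,d\tau$ is bounded by $2L_r$ and that $\dot x=O\bigl(\gamma(L_r\delta+|e|)\bigr)$, so that $e$ obeys a scalar equation of the form $\dot e=-\gamma(1+g(t))e+\gamma\sigma(t)$ with $|\sigma|=O(L_r\delta)$ and $g$ a zero-average (over a period), $O(L_r)$-bounded coefficient. Since the hypothesis on the initial datum forces $|e(0)|=|\bar y_0-a_{0,\delta}(x_0)/2|\le\gamma^\star$, a comparison/ISS argument on this contractive equation keeps $|e(t)|$ of order $L_r\delta$ — never of order $M_r$ — for all $t$; this is precisely where the restriction on $\bar y_0$ is spent, namely to exclude a large fast transient of the filter that would otherwise make $y_\delta-\bar y$ of size $M_r$. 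With $|y_\delta-\bar y|\le 2L_r\delta+|e|$ now $M_r$-free, the averaging transformation, its derivatives, and the resulting mismatch estimate are all controlled through $L_r$ and $\delta$ alone, and feeding these into the standard averaging inequality in its exponentially-stable-set form (\cite{khalil2002nonlinear}, \S10.4; \cite{Teel2003unified,TAN2005550}) produces a $\gamma^\star=\gamma^\star(L_r,\delta,d)$. I expect the main obstacle to be exactly this uniform control of $e$: one has to close the loop between the bound on $e$ (which needs $\dot x$ small) and the bound on $\dot x$ (which needs $e$ small), and to handle the time-varying, possibly sign-indefinite coefficient $1+g(t)$ — most cleanly by a further short averaging of the scalar $e$-equation — all while never invoking a bound of size $M_r$.
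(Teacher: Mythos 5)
Your overall architecture coincides with the paper's: closeness of \eqref{eq:GlobalESGeneric} to the average \eqref{eq:TildeAV_2} on a horizon $[0,\bar t/\gamma]$, followed by an extension to all $t\ge \bar t/\gamma$ via the local exponential stability of $\mbox{\rm graph}\left.\tau\right|_{\mathcal{A}_\delta}$ from Lemma \ref{lemma:Reduced2}, exactly as in Proposition \ref{prop:BasicES}. Your central insight --- that the averaging constants must be made $M_r$-free by measuring the filter state against $a_{0,\delta}(x)/2$, and that the hypothesis $|\bar y_0-a_{0,\delta}(x_0)/2|\le\gamma^\star$ is spent precisely to rule out a large filter transient --- is also the paper's insight. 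Where you diverge is in \emph{where} the filter-error analysis is carried out. You study $e=\bar y-a_{0,\delta}(x)/2$ along the trajectories of the \emph{actual} closed-loop system, which produces $\dot e=-\gamma\bigl(1+g(t)\bigr)e+\gamma\sigma(t)$ with $g(t)=\tfrac12 a_{0,\delta}'(x(t))u(t)$ bounded only by $L_r$, and you must then close the loop between the bounds on $e$ and on $\dot x$ while taming a possibly sign-indefinite damping coefficient. The paper never meets this difficulty: it defines $e_{y_a}:=a_{0,\delta}(x_a)/2-\bar y_a$ along the \emph{average} system \eqref{eq:TildeAV_2}, which is a cascade ($x_a$ autonomous, $\bar y_a$ linear), so that
\begin{equation*}
\dot e_{y_a}=-\gamma\, e_{y_a}-\frac{\gamma}{4}\,\frac{\partial a_{0,\delta}(x_a)}{\partial x_a}\,b_{1,\delta}(x_a),\qquad e_{y_a}(0)=0,
\end{equation*}
with constant damping $-\gamma$, forcing bounded by $\gamma L_r^2\delta$ (using $|\partial a_{0,\delta}/\partial x|\le 2L_r$ and $|b_{1,\delta}|\le 2L_r\delta$), and zero initial condition by taking $\bar y_{a0}=a_{0,\delta}(x_a(0))/2$. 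This yields $|e_{y_a}(t)|\le L_r^2\delta$ for all $t$ in two lines; the standard near-identity map $z=\eta_a-\gamma\epsilon(\eta_a,t)$ and Gronwall then give closeness with constants depending only on $(L_r,\delta)$, and the restriction on $\bar y_0$ enters only as the Gronwall seed $\|\eta(0)-\eta_a(0)\|\le\gamma^\star$.

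The concrete gap is in your treatment of the $e$-equation, which you acknowledge but do not resolve. For $L_r\ge 1$ the crude estimate $\tfrac{d}{dt}|e|\le-\gamma(1-L_r)|e|+\gamma\,O(L_r\delta)$ yields no contraction, so your argument genuinely needs the approximate zero mean of $g$, i.e.\ the ``further short averaging'' of the scalar equation. But $g(t)=\tfrac12 a_{0,\delta}'(x(t))u(t)$ averages to zero over a period only up to the drift of $x$ within that period; quantifying the residue requires a modulus of continuity for $a_{0,\delta}'$ (equivalently, second-derivative information on $h$ over $[x^\star-r-\delta,\,x^\star+r+\delta]$), and the resulting smallness condition on $\gamma$ then involves that extra constant. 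This jeopardises the precise claim of the theorem, namely that $\gamma^\star$ depends only on $(L_r,\delta,d)$ --- which is the entire point of introducing the high-pass filter. So either you supply an argument for $\dot e=-\gamma(1+g)e+\gamma\sigma$ that achieves contraction using only $L_r$ (none is sketched), or you should relocate the filter-error analysis to the average system, as the paper does, where the cascade structure makes the issue disappear.
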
 
\ifthenelse{\boolean{CONF}}{}{This Theorem is proved in Appendix \ref{sec:DetailedProofHPFE}. The fact that $\gamma^\star$ does not depend anymore on $M_r$ but only on $L_r$ suggests that in presence of a globally Lipschitz cost function the HPF extremum seeking scheme in (\ref{eq:GlobalESGeneric}) is global. This intuition is confirmed in the following theorem.

\begin{theorem}
	\label{cor:Global}
	Let $h(\cdot)$ be such that Assumptions \ref{hyp:Existence}-\ref{hyp:Unicity} are fulfilled. Moreover, assume $h(\cdot)$ be globally Lipschitz. Then, for any $\epsilon>0$ there exist $\gamma^\star(\epsilon) > 0$ and $\overline{\delta}(\epsilon)$ such that for any $\gamma \in (0, \gamma^\star)$, for any $\delta$ and $A>0$ fulfilling $\delta \le \overline{\delta}(\epsilon)$ and $\underline{\delta}^\star(\delta) \ge A+\overline{b}$ for some $\bar{b} > 0$, and for any $x_0 \in \mathbb{R}$ and $\bar{y}_0 \in \mathbb{R}\,:\,|\bar{y}_0-a_{0,\delta}(x_0)/2| \le \gamma$, then the trajectories of \eqref{eq:GlobalESGeneric} are bounded and
	\[
	\limsup_{t \to \infty} |x(t)-x^\star| \le \epsilon.
	\]
\end{theorem}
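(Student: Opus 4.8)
The plan is to bootstrap the semi-global Theorem~\ref{prop:HPFES} into a global statement by exploiting the one feature that sets the HPF scheme apart from the plain one: its threshold $\gamma^\star$ depends on the cost only through the Lipschitz constant $L_r$, and not through the bound $M_r$. When $h(\cdot)$ is globally Lipschitz with constant $L$, one has $L_r\equiv L$ for every $r$, so the quantity $\gamma^\star(L_r,\delta,d)$ delivered by Theorem~\ref{prop:HPFES} becomes \emph{independent of $r$}, hence of the size of the domain of attraction. This $r$-independence is exactly what turns the semi-global guarantee into a global one, since it lets the admissible radius $r_0$ be pushed to infinity without driving $\gamma^\star$ to zero.

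Given $\epsilon>0$ I would first fix the geometry of the target set. By item~a) of Lemma~\ref{lemma:Reduced2} one has ${\cal A}_\delta\subseteq[x^\star-\delta,\,x^\star+\delta]$, so any pair within distance $d$ of $\mbox{\rm graph}\,\tau|_{{\cal A}_\delta}$ has its $x$-component within $d+\delta$ of $x^\star$. I therefore pick $\overline{\delta}(\epsilon)$ and an internal tolerance $d$ so that $d+\delta\le\epsilon$ for every admissible $\delta$; for instance $\overline{\delta}(\epsilon)=\epsilon/2$ together with $d=\epsilon-\delta$, which keeps $d$ bounded away from zero. With $\delta,d$ so constrained and $L_r\equiv L$, Theorem~\ref{prop:HPFES} supplies a threshold that no longer depends on $r$, and I set $\gamma^\star(\epsilon)$ equal to its infimum over the admissible values of $\delta$. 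The admissibility clause $\underline{\delta}^\star(\delta)\ge A+\bar b$ is inherited verbatim from Theorem~\ref{prop:HPFES}.

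Next I would discharge the arbitrariness of the initial condition. Fix any $x_0\in\mathbb{R}$. Since the class-${\cal K}_\infty$ function $\chi$ entering $r_0$ in Theorem~\ref{prop:HPFES} satisfies $\chi(r-d-2\delta)\to\infty$ as $r\to\infty$, there is an $r=r(x_0)$ for which $r_0=\chi(r-d-2\delta)\ge|x_0-x^\star|$. For this $r$ the threshold remains $\gamma^\star(\epsilon)$ (no $r$-dependence), and the data constraint $|\bar y_0-a_{0,\delta}(x_0)/2|\le\gamma<\gamma^\star(\epsilon)$ imposed here is stronger than the one required by Theorem~\ref{prop:HPFES}. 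That theorem then yields, for every $\gamma\in(0,\gamma^\star(\epsilon))$, boundedness of the trajectory and $\|(x(t),\bar y(t))\|_{\mbox{\rm graph}\,\tau|_{{\cal A}_\delta}}\le d$ for all $t\ge\bar t/\gamma$. Projecting onto the $x$-coordinate and using ${\cal A}_\delta\subseteq[x^\star-\delta,\,x^\star+\delta]$ gives $|x(t)-x^\star|\le d+\delta\le\epsilon$ for all such $t$, hence $\limsup_{t\to\infty}|x(t)-x^\star|\le\epsilon$. The time $\bar t(r_0,d)/\gamma$ grows with $|x_0-x^\star|$ through $r$, but as only a $\limsup$ is claimed this dependence of the \emph{time} on $x_0$ is immaterial.

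The crux, and the only non-routine point, is that the \emph{single} threshold $\gamma^\star(\epsilon)$ can be taken uniform in both $x_0$ and $\delta$. Uniformity in $x_0$ is precisely the dividend of global Lipschitzness, which erases the $r$-dependence of $\gamma^\star$; this is the conceptual heart of the theorem. Uniformity in $\delta$ is the delicate part: I must verify that $\gamma^\star(L,\delta,d)$ from Theorem~\ref{prop:HPFES} does not collapse to zero as $\delta$ ranges over $(0,\overline{\delta}(\epsilon)]$. I would settle this by tracing the $\delta$-dependence inside the proof of Theorem~\ref{prop:HPFES}, where $\gamma^\star$ is assembled from bounds in $L_r$, $\delta$ and $d$, and by exploiting the tie $d=\epsilon-\delta$ to keep those bounds controlled, so that the infimum over the admissible $\delta$ is strictly positive.
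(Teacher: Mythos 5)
Your proposal hinges on one claim: that under global Lipschitzness the threshold $\gamma^\star$ supplied by Theorem~\ref{prop:HPFES} is independent of $r$, so that it survives the limit $r\to\infty$. That claim is false, and the gap is visible in the paper's proof of Theorem~\ref{prop:HPFES} (Appendix~\ref{sec:DetailedProofHPFES}). There the threshold is explicitly defined as $\gamma^\star(\bar t,\bar c,L_r,\delta):=\min\{\gamma_0^\star,\gamma_1^\star,\gamma_2^\star,\gamma_3^\star\}$ with $\gamma_1^\star\propto 1/(e^{A\bar t}-1)$ and $\gamma_2^\star\propto e^{-A\bar t}$: these exponentials come from the Gronwall estimate that keeps $(x,\bar y)$ close to the average trajectory over the whole transient $[0,\bar t/\gamma]$. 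The horizon $\bar t$ is not a free parameter: following the tail of the proof of Proposition~\ref{prop:BasicES} (which Theorem~\ref{prop:HPFES} invokes verbatim), $\bar t\ge \bar t_1$ where $\bar t_1$ must satisfy $\beta(r,\bar t_1)<\bar c_0-\bar c$, i.e.\ $\bar t_1$ is the time the \emph{average} system needs to contract from distance $\sim r$ down to a small fixed neighbourhood of the attractor. For a generic class-${\cal KL}$ bound $\beta$ this time diverges as $r\to\infty$, hence $\gamma^\star\sim e^{-A\bar t(r)}\to 0$. Global Lipschitzness removes the $M_r$-dependence (that is the entire content of Theorem~\ref{prop:HPFES} versus Proposition~\ref{prop:BasicES}); it does nothing to this $r$-dependence through $\bar t$. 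Consequently the infimum you take "over the admissible values" is zero, and your single $\gamma^\star(\epsilon)$ does not exist. You actually brushed against the problem when you noted that "$\bar t(r_0,d)/\gamma$ grows with $|x_0-x^\star|$" and declared it immaterial because only a $\limsup$ is claimed: it is material, because $\bar t$ feeds back exponentially into the admissible $\gamma$.

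The paper closes this gap with an ingredient your proof never uses, which is the tell-tale sign: claim C3) (Global Practical Stability) of Lemma~\ref{lemma:Averaging}, which requires the additional hypothesis A3) — a \emph{uniform} contraction property of the average system's ${\cal KL}$ bound, i.e.\ a single finite $\bar t^\star$ such that $\beta(r,\bar t)<r-\underline{\rho}$ for \emph{all} $r>\underline{\rho}$ and all $\bar t>\bar t^\star$ (this is the "certain regularity conditions on the average system" flagged in the abstract and conclusions). With A3 one can fix $\bar t$, hence $\gamma^\star$, independently of the initial condition, and then iterate the finite-horizon closeness estimate on successive intervals $I_n=[n\bar t/\gamma,(n+1)\bar t/\gamma)$: as long as $\|x(n\bar t/\gamma)\|_{\cal A}>\underline{\rho}$ the sequence $\{\|x(n\bar t/\gamma)\|_{\cal A}\}_n$ is strictly decreasing, which yields boundedness and $\limsup_{t\to\infty}\|x(t)\|_{\cal A}\le\underline{\rho}$ for every $x_0\in\mathbb{R}^n$. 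No telescoping of semi-global thresholds is involved; the global statement is obtained from a genuinely global iteration whose feasibility is exactly what A3 guarantees. To repair your argument you would either have to import A3 (or prove that the average system \eqref{eq:TildeAV_2} satisfies it under the stated assumptions, which the paper does not claim) or show that $\bar t$ in Theorem~\ref{prop:HPFES} can be chosen uniformly in $r$ — which is the same thing.
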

This theorem is proved in Appendix \ref{sec:ProofCorollary}.}

\section{Numerical Results}
\label{sec:Application}
This section presents numerical results obtained adopting 
 the following cost function $h(\cdot)\,:\,\mathbb{R}\to \mathbb{R}$ 
\ifthenelse{\boolean{CONF}}{
\begin{equation}
	\label{eq:h2ndCase}
		h(x) = h_0+\left\{\begin{array}{cc}
			(x-\pi)^2-1 & x < \pi\\
			\cos(x-\pi)-2 & x \in [\pi,\,2\pi)\\
			(x-2\pi)^2-3 & x \ge 2\pi
		\end{array}\right.
\end{equation}
where
}
{\begin{equation}
	\label{eq:h2ndCase}
	\begin{aligned}
	h(x) =&\, h_0+A\sin(10x)\\
	&\,+\left\{\begin{array}{cc}
		(x-\pi)^2-1 & x < \pi\\
		\cos(x-\pi)-2 & x \in [\pi,\,2\pi)\\
		(x-2\pi)^2-3 & x \ge 2\pi
	\end{array}\right.
\end{aligned}
\end{equation}
where } $h_0 \in \mathbb{R}$. This function, which verifies Assumptions \ref{hyp:Existence} and \ref{hyp:Unicity}, is depicted in Figure \ref{fig:2ndMap_a} for $A= 0$ and $h_0 = 10$, and in Figure \ifthenelse{\boolean{CONF}}{\ref{fig:2ndMap_a}}{\ref{fig:LocalMinima_a}} for $A = 1/4$ and $h_0 = 10$, {on $[-\pi,\,3\pi]$}. 

{The tests are grouped into two categories, the first of which highlights the performance of \eqref{eq:BasicES} whereas the second deals with the behaviour of \eqref{eq:GlobalESGeneric}.

As for the performance of \eqref{eq:BasicES}, the results are presented in agreement with an incremental complexity policy. First, for the case of a local strongly convex cost, we show that the Fourier-based averaging \eqref{eq:average-Fourier} predicts the asymptotic equilibrium of \eqref{eq:BasicES} more accurately than the first-order Taylor expansion-based averaging \eqref{eq:First_Order}.  Then, assuming a quasi-strongly convex cost, we test that the trajectory of \eqref{eq:First_Order} does not track the trajectory of \eqref{eq:BasicES} as good as done by \eqref{eq:average-Fourier}. Moreover, we introduce a non-convex cost function and we show that increasing $\delta$ lets \eqref{eq:BasicES} to pass over local minima.}

\ifthenelse{\boolean{CONF}}{
	\begin{figure}
		\centering
		\includegraphics[width=0.55\columnwidth]{CostFunction_Log}
		\caption{Cost function \eqref{eq:h2ndCase} for $h_0 = 0$. It presents multiple points with null first derivative and it is also non-symmetric around the minimiser. The saddle point is located at $x = \pi$ and the minimiser is at $x^\star = 2\pi$.}
		\label{fig:2ndMap_a}
	\end{figure}	
	\begin{figure}
		\centering
		\includegraphics[width=0.55\columnwidth]{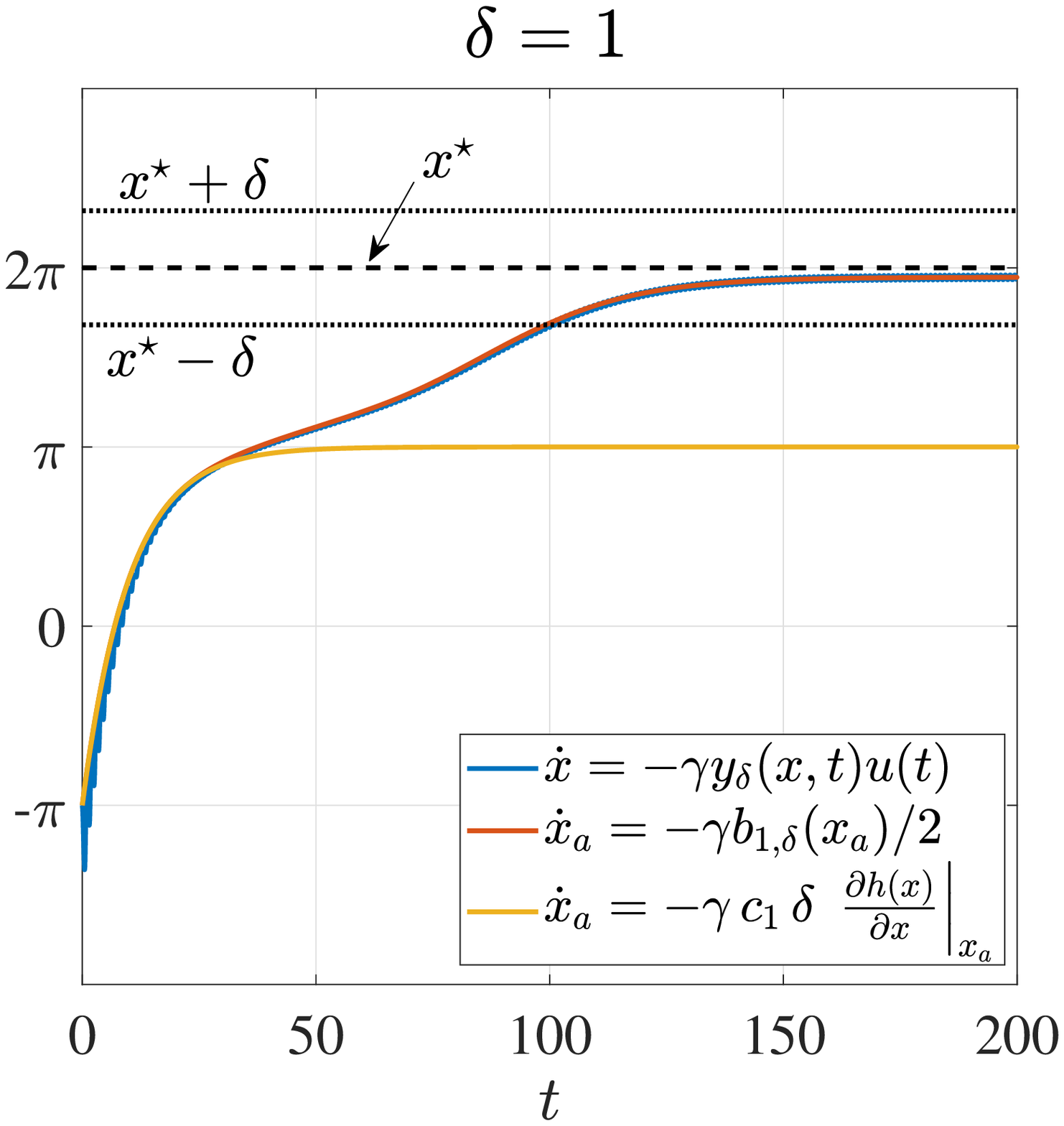}
		\caption{The presence of saddle points stacks the Taylor-based averaging (yellow) of the classic ES (blue). Vice versa, the Fourier-based averaging (red) better represents the actual behaviour of the classic ES (blue). These results are obtained for $h_0=0$ and $\gamma = 0.1$.}
		\label{fig:2ndMap_b}
	\end{figure}
	\begin{figure}
		\centering
		\includegraphics[width=0.55\columnwidth]{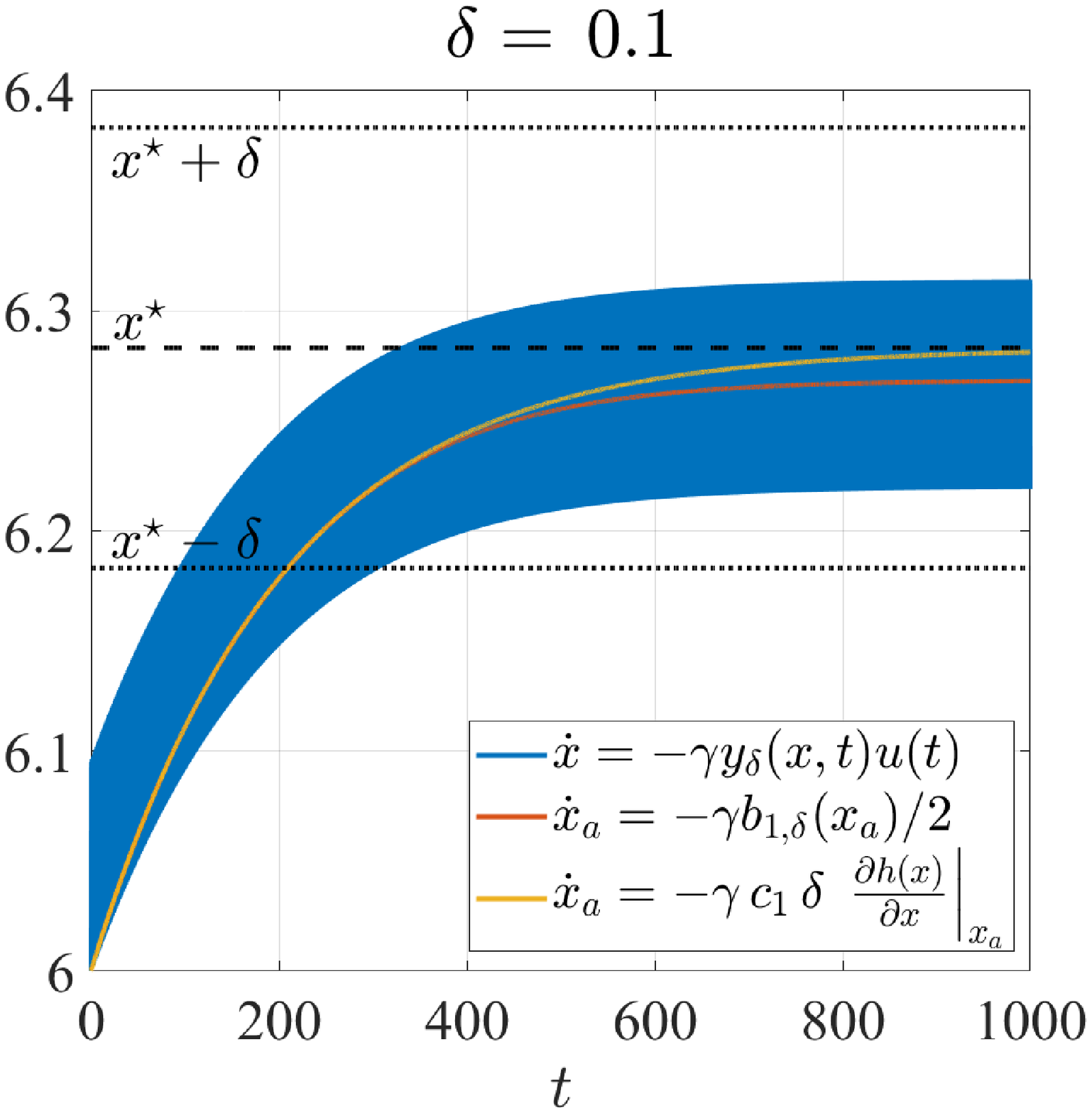}
		\caption{The average based on the Taylor expansion (yellow) of the classic ES wrongly assess $x^\star$ as an equilibrium point. Indeed, due to the asymmetry of $h(\cdot)$ around $x^\star$, the classic ES (blue) converges to 
			an equilibrium point different from $x^\star$. Moreover, the average based on the Fourier series (red) tracks the classic ES more accurately than the average based on the Taylor expansion. These results are obtained for $h_0=0$ and $\gamma = 0.1$.}
		\label{fig:2ndMap_c}
	\end{figure}
	\begin{figure}
		\centering
		\includegraphics[width=0.55\columnwidth]{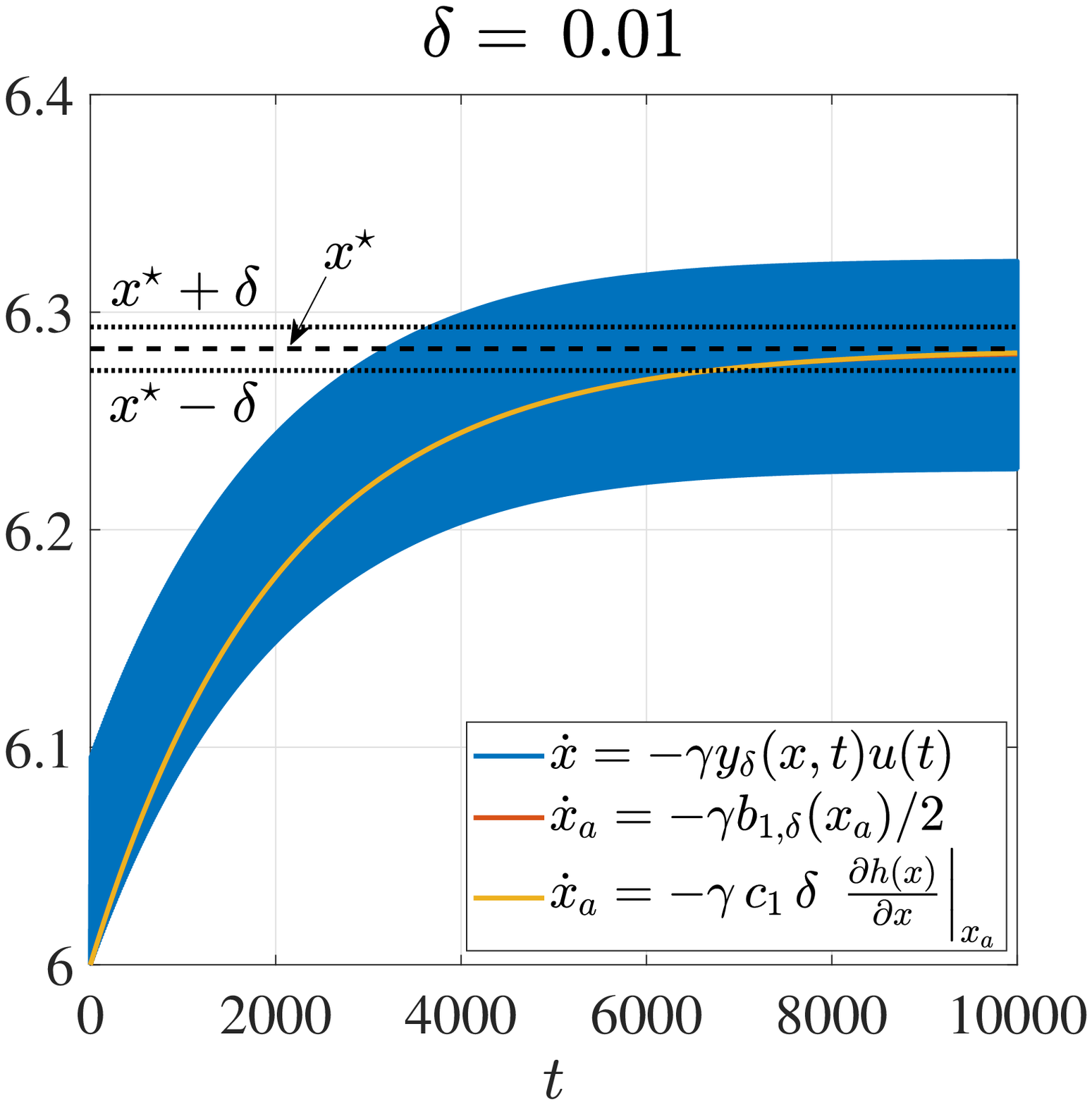}
		\caption{The average of the classic ES, based on the Fourier series (red), converges to an equilibrium point within the set $[x^\star-\delta,\, x^\star+\delta]$ accordingly to what is foreseen in Lemma \ref{lemma:Reduced1}. Thus, the smaller is $\delta$ the closer the equilibrium point is to $x^\star$. These results are obtained for $h_0=0$ and $\gamma = 0.1$.}
		\label{fig:2ndMap_d}
	\end{figure}
}{
	\begin{figure}[h!]
		\centering
		\begin{subfigure}{.48\columnwidth}
			\includegraphics[width=\textwidth]{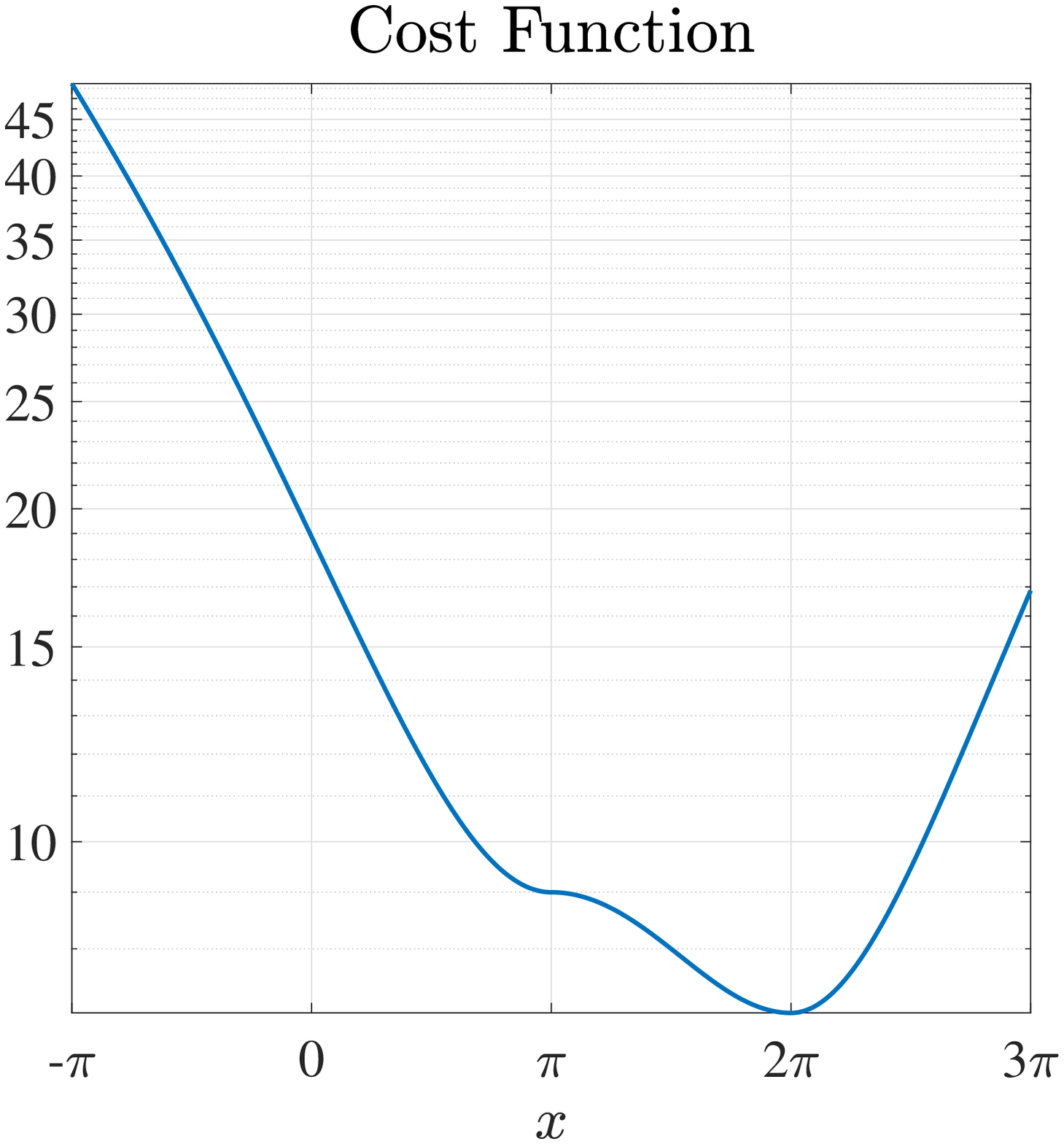}
			\caption{} 
			\label{fig:2ndMap_a}
		\end{subfigure}
			\begin{subfigure}{.48\columnwidth}
		\includegraphics[width=\textwidth]{EquilibriumDelta001}\llap{\includegraphics[height=2.5cm]{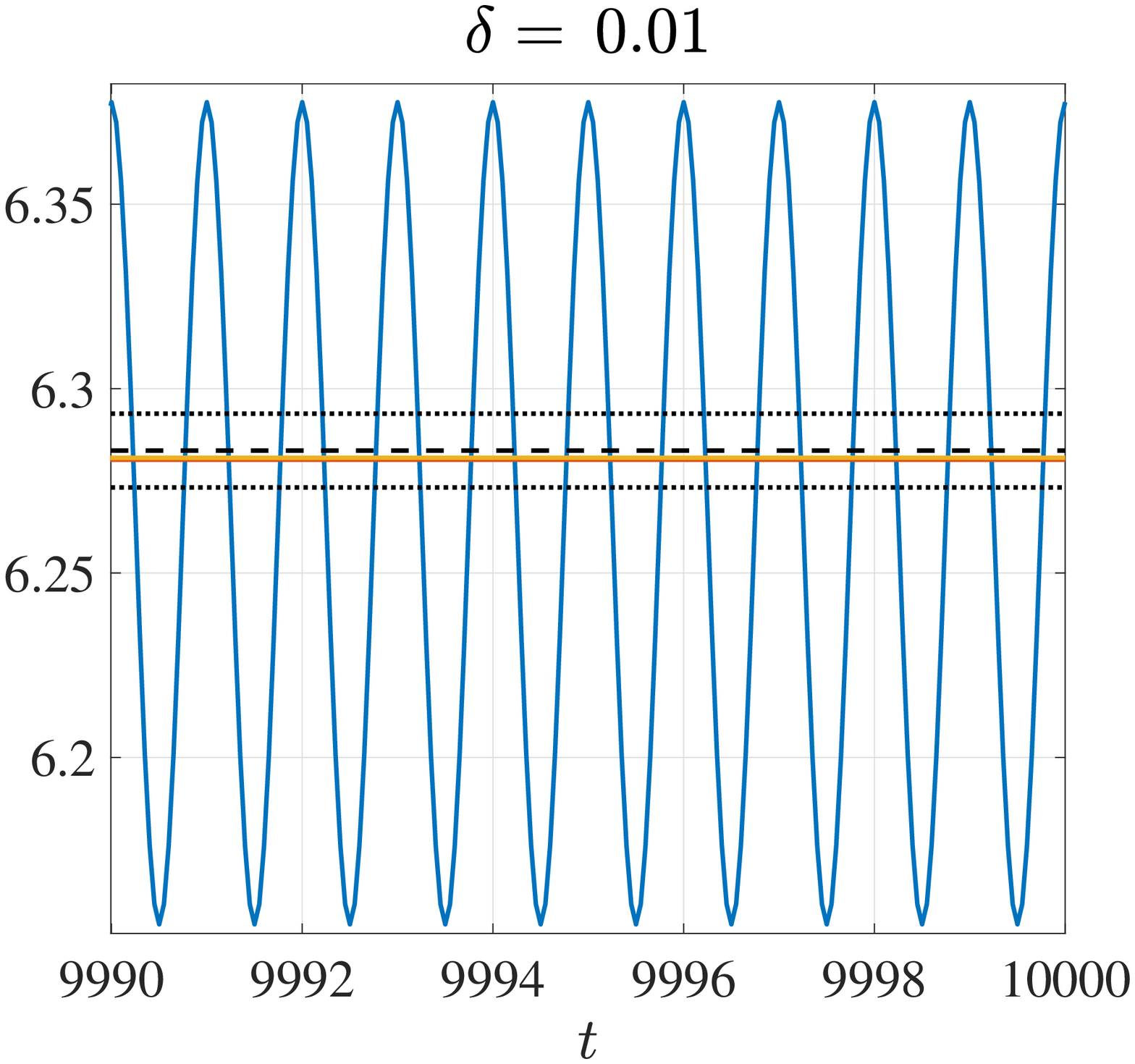}}\llap{\includegraphics[height=1.3cm]{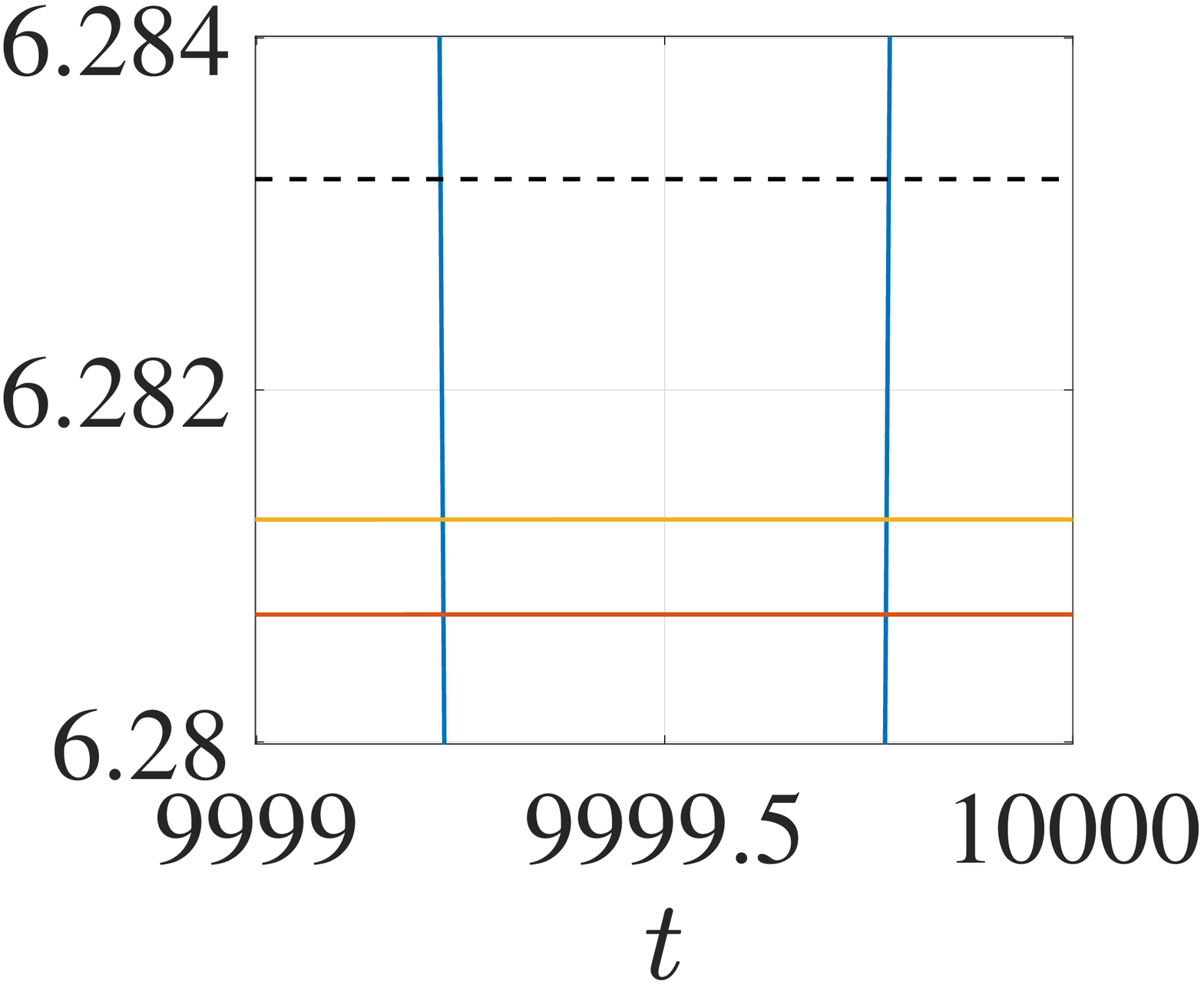}}
		\caption{} 
		\label{fig:2ndMap_d}
	\end{subfigure}
		\begin{subfigure}{.48\columnwidth}
	\includegraphics[width=\textwidth]{EquilibriumDelta01}\llap{\includegraphics[height=2.5cm]{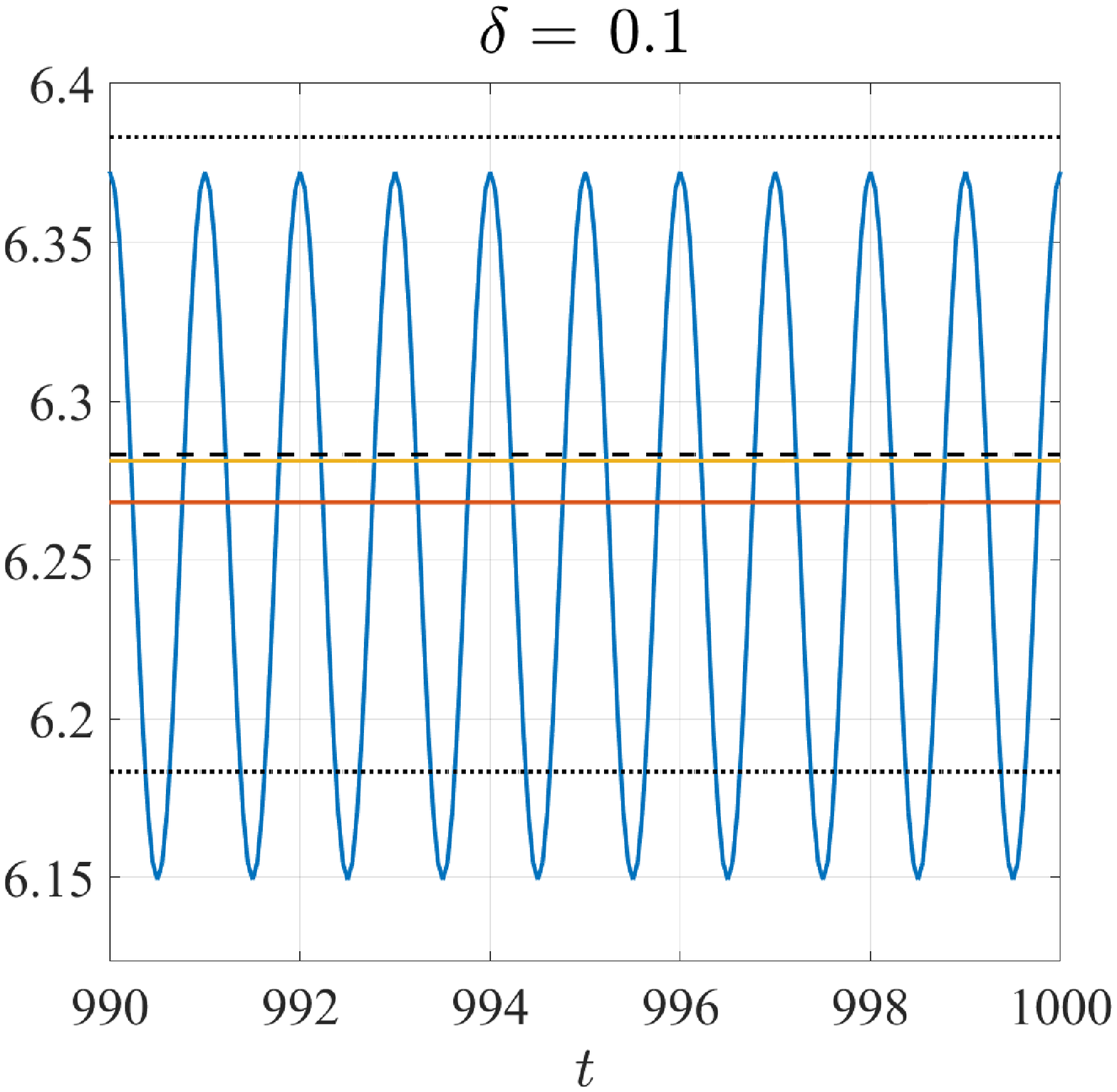}}
	\caption{} 
	\label{fig:2ndMap_c}
\end{subfigure}
		\begin{subfigure}{.48\columnwidth}
			\includegraphics[width=\textwidth]{Saddle}\llap{\includegraphics[height=2.3cm]{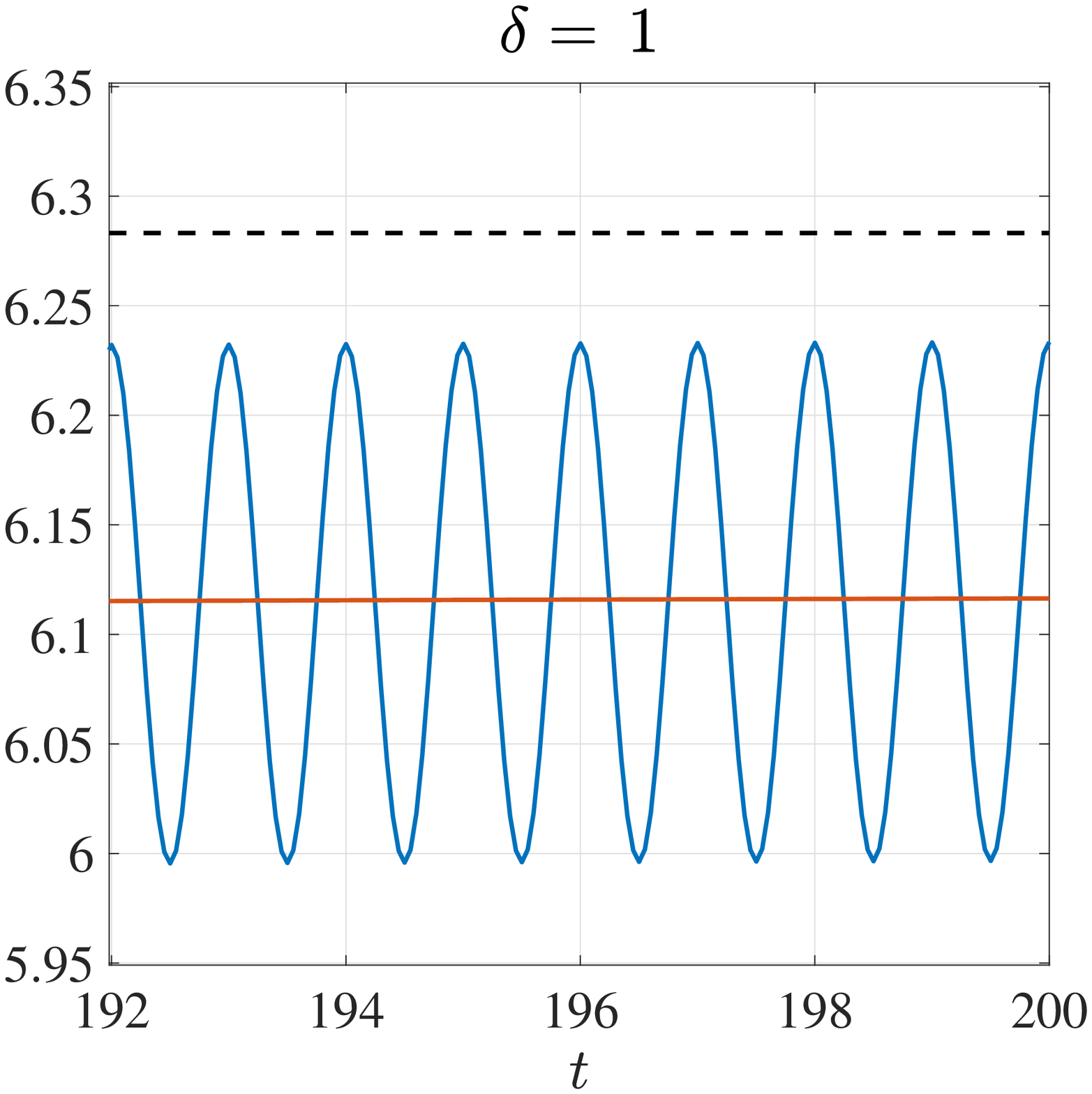}}
			\caption{} 
			\label{fig:2ndMap_b}
		\end{subfigure}
		\caption{(a) Cost function \eqref{eq:h2ndCase} for $A=0, \, h_0 = 10$. It presents multiple points with null first derivative and it is also non-symmetric around the minimiser. The saddle point is located at $x = \pi$ and the minimiser is at $x^\star = 2\pi$. (b)-(c) The average of the classic ES converges to an equilibrium point within the set $[x^\star-\delta,\, x^\star+\delta]$ accordingly to what foreseen in Lemma \ref{lemma:Reduced1}. (d) The presence of saddle points stacks the Taylor-based averaging (yellow) of the classic ES (blue). Vice versa, the Fourier-based averaging (red) better represents the actual behaviour of the classic ES (blue). In general, the average based on the Fourier series tracks the classic ES more accurately than the average based on the Taylor expansion. These results are obtained for $\gamma = 0.1$.}
		\label{fig:2ndMap}
	\end{figure}
}

\ifthenelse{\boolean{CONF}}{}{{To simulate the case of a strictly convex function, $x_0=6$ is picked sufficiently close to the minimiser of the cost function in Figure \ref{fig:2ndMap_a}. Then, Figures \ref{fig:2ndMap_d}-\ref{fig:2ndMap_c} show the convergence performance of the basic ES together with the average computed by \eqref{eq:First_Order} (yellow) and by \eqref{eq:average-Fourier} (red). 
 These simulations confirm that, in agreement with Lemma \ref{lemma:Reduced1} and Proposition \ref{prop:BasicES}, the optimisation variable $x$ converges to $\mathcal{A}_\delta \subset [x^\star-\delta,\,x^\star+\delta]$.  In more detail, the asymmetry of $h(\cdot)$ in the neighbourhood of $x^\star$ implies that the equilibrium point of \eqref{eq:average} does not correspond to $x^\star$, see yellow lines in Figures \ref{fig:2ndMap_c} and \ref{fig:2ndMap_d}. 
 
To simulate a quasi-strictly convex function, $x_0=-\pi$ is picked sufficiently large to make \eqref{eq:BasicES} passing through the saddle point of the cost function depicted in Figure \ref{fig:2ndMap_a}. In this scenario,  Figure \ref{fig:2ndMap_b} shows that the trajectory of  \eqref{eq:First_Order} (yellow) gets stacked in presence of local extrema whereas the trajectories of the proposed Fourier-based \eqref{eq:average-Fourier} (red) remain close to those of \eqref{eq:BasicES}, as foreseen by Proposition \ref{prop:BasicES}.}
\begin{figure}[h!]
	\centering
	\begin{subfigure}{.48\columnwidth}
		\includegraphics[width=\textwidth]{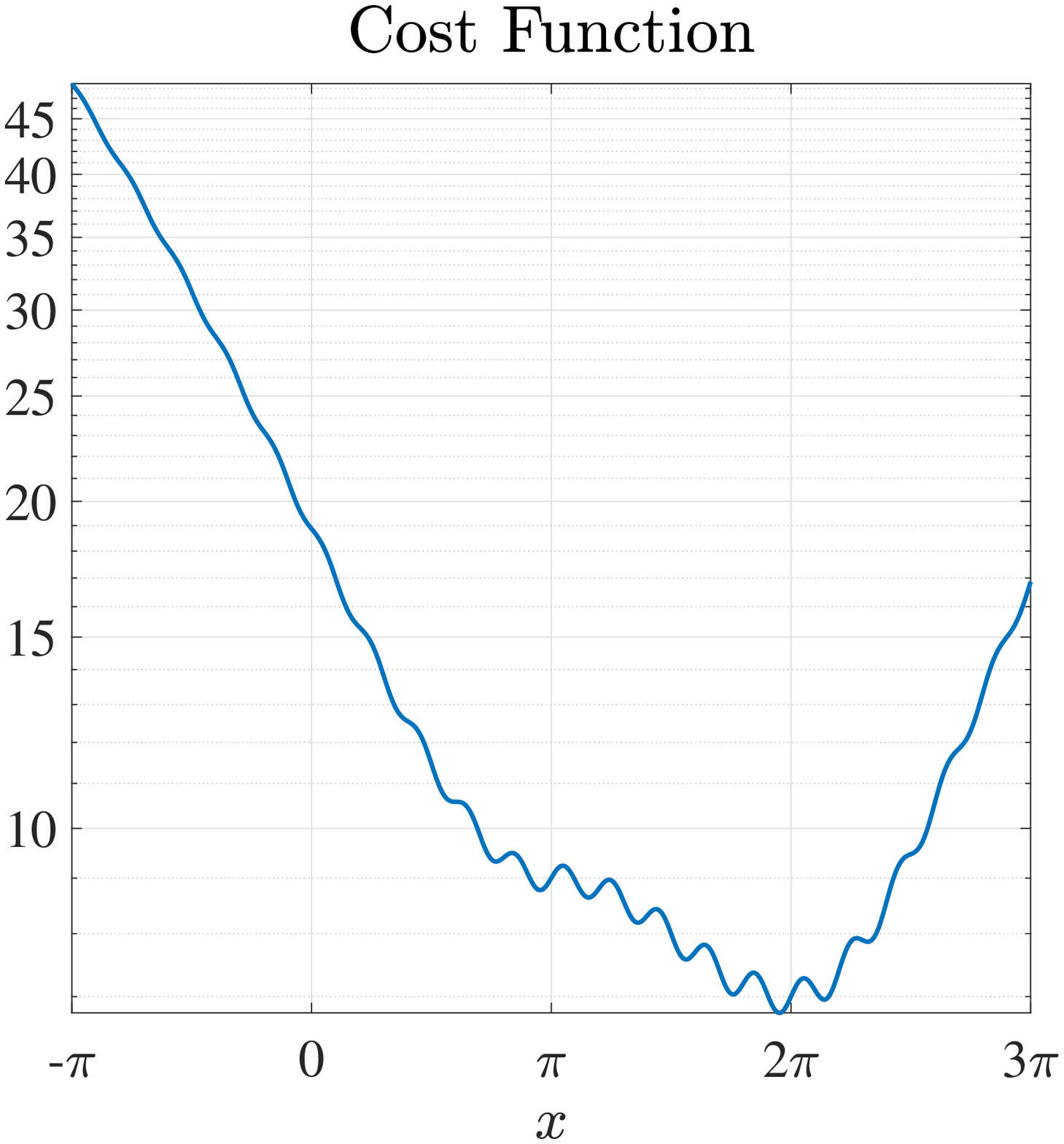}
		\caption{} 
		\label{fig:LocalMinima_a}
	\end{subfigure}
	\begin{subfigure}{.48\columnwidth}
		\includegraphics[width=\textwidth]{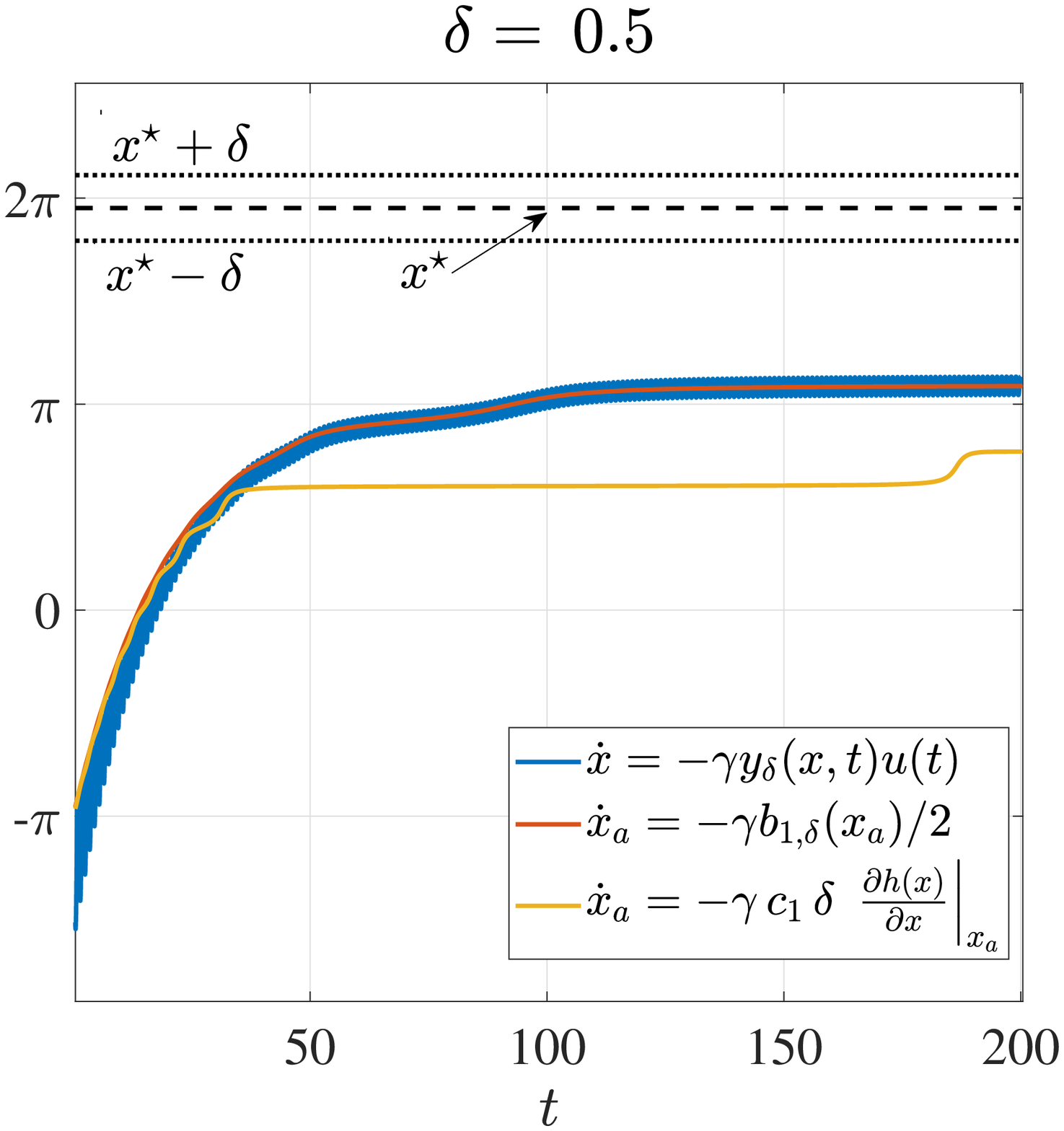}\llap{\includegraphics[height=2.2cm]{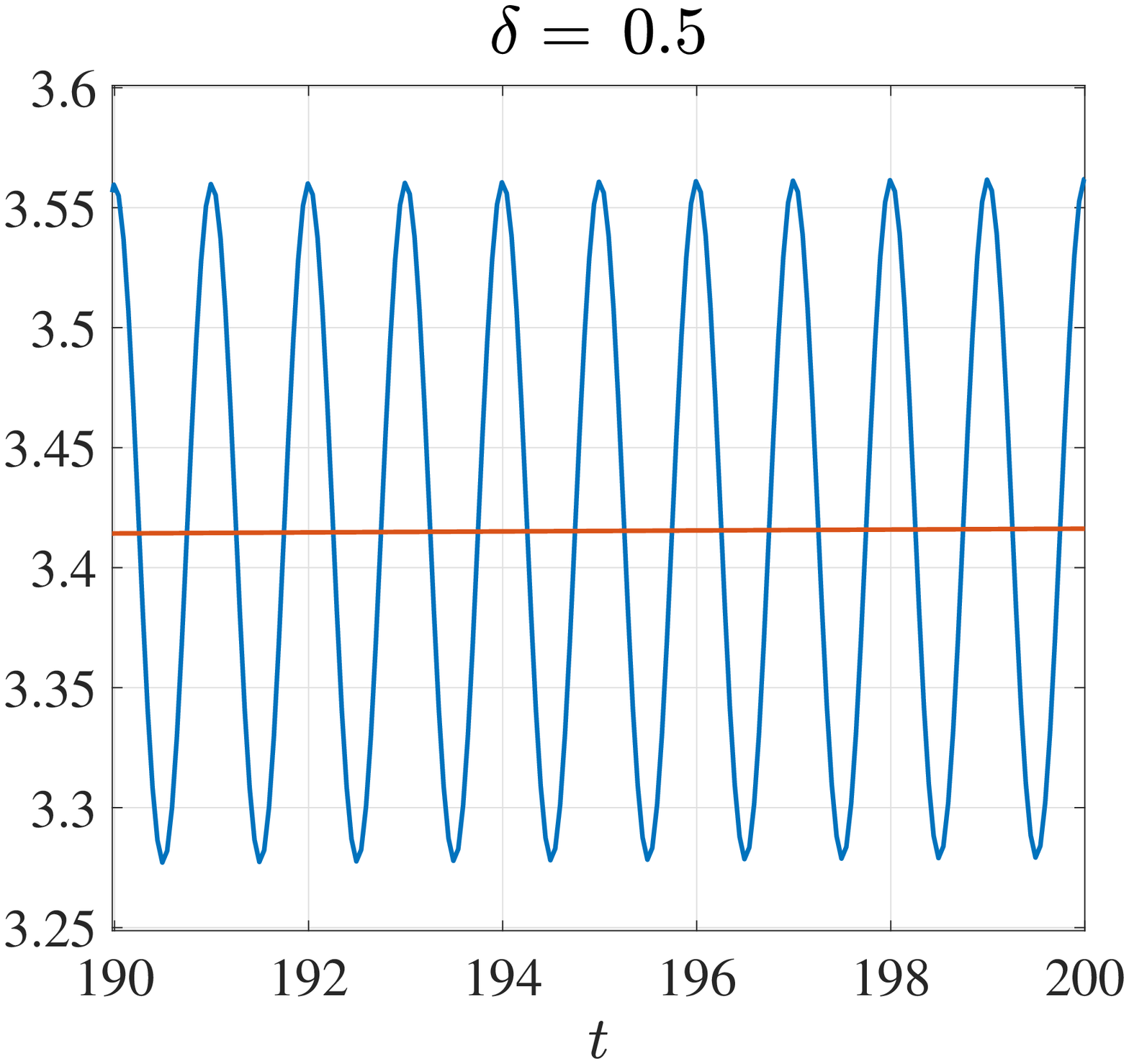}}
		\caption{} 
		\label{fig:LocalMinima_b}
	\end{subfigure}
	\begin{subfigure}{.48\columnwidth}
		\includegraphics[width=\textwidth]{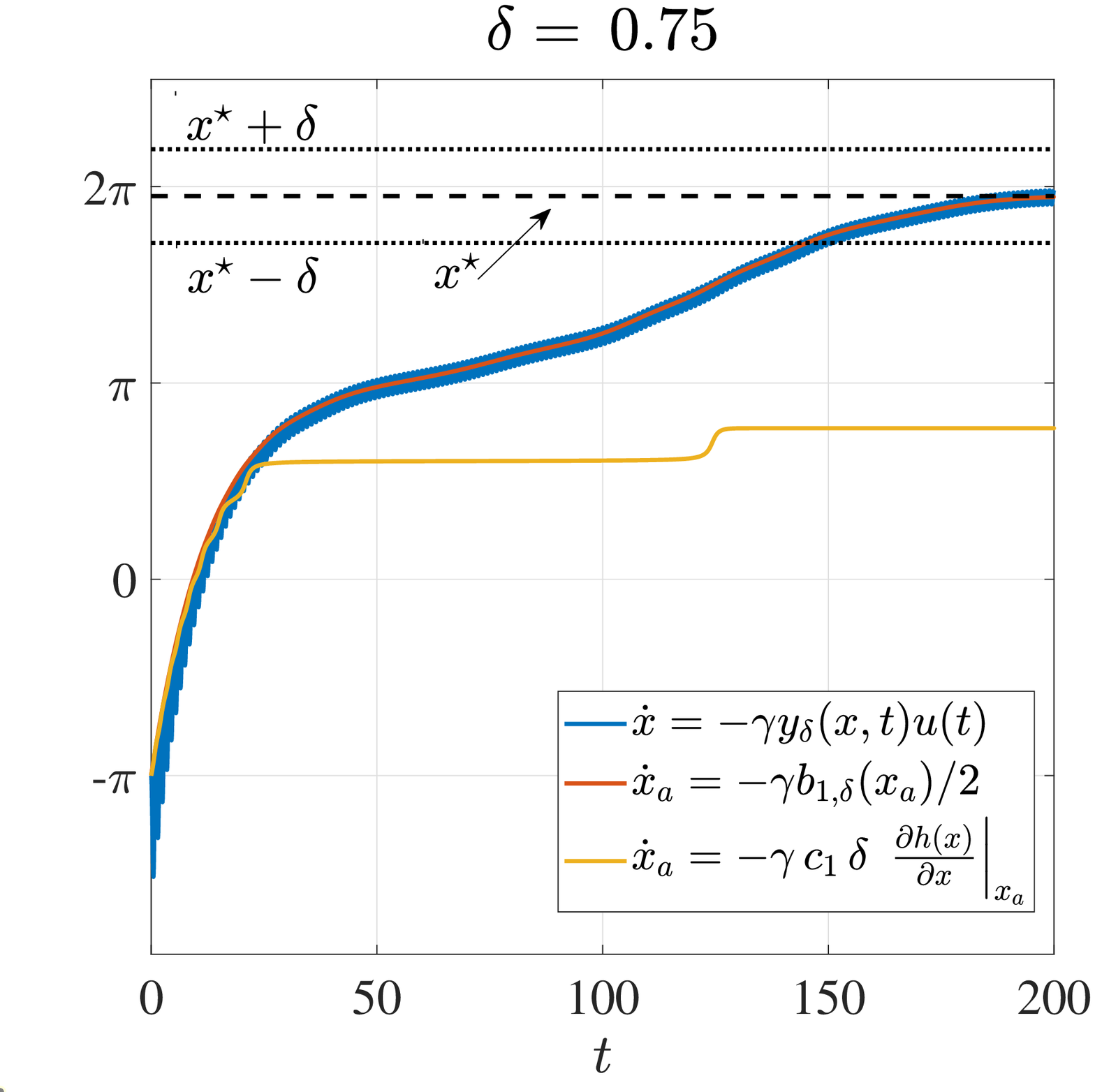}\llap{\includegraphics[height=2.2cm]{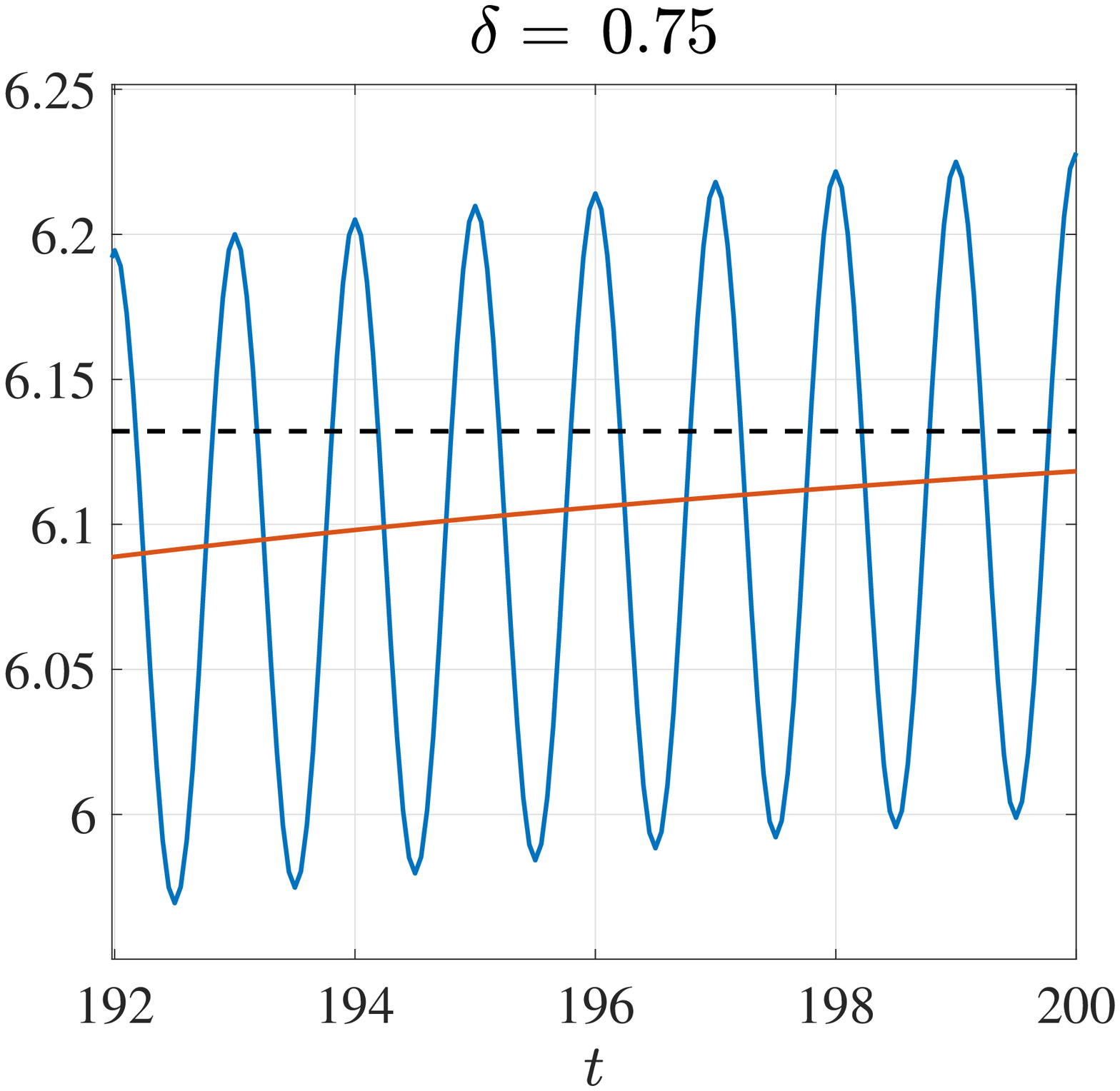}}
		\caption{} 
		\label{fig:LocalMinima_c}
	\end{subfigure}
	\begin{subfigure}{.48\columnwidth}
		\includegraphics[width=\textwidth]{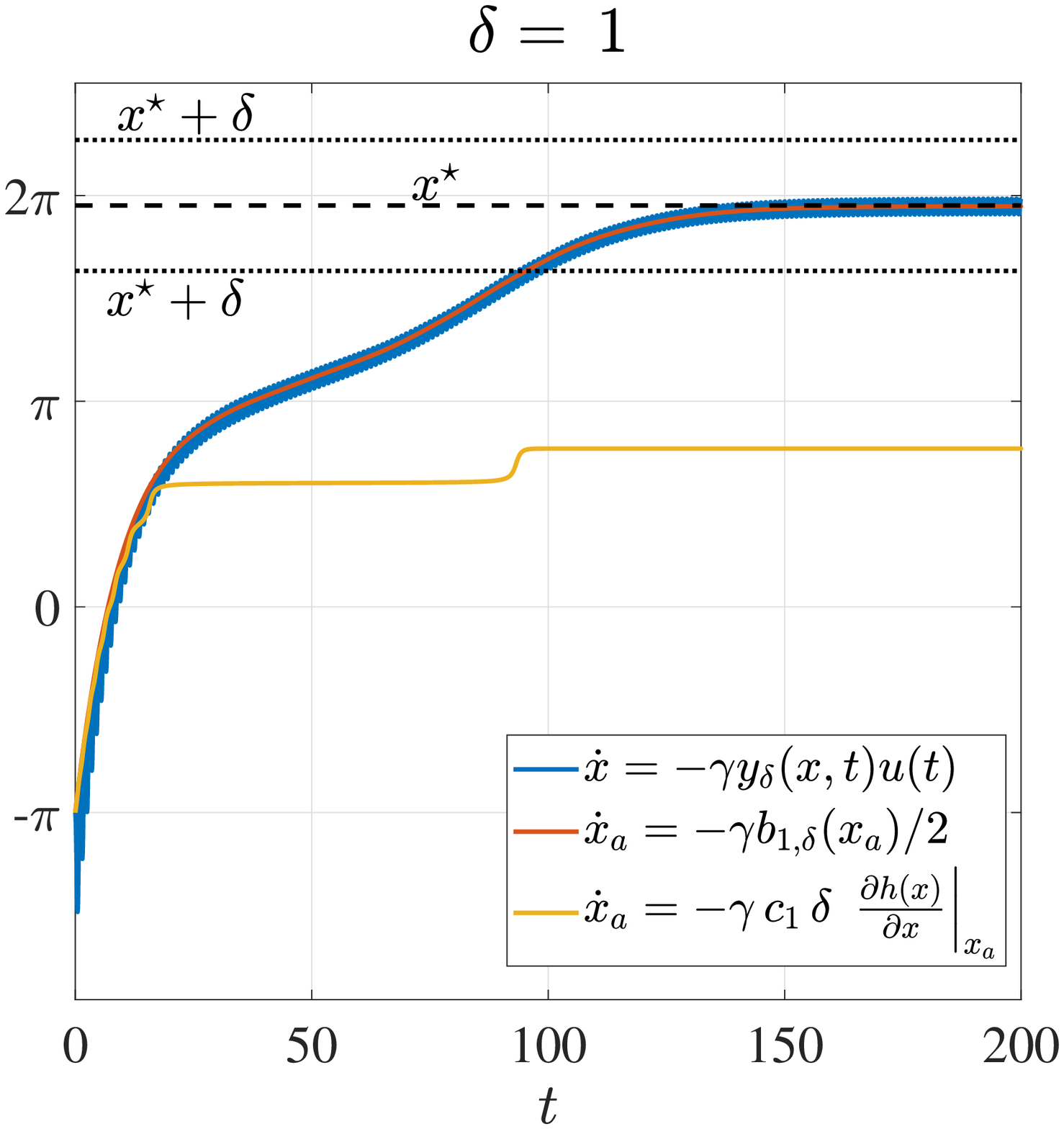}\llap{\includegraphics[height=2.2cm]{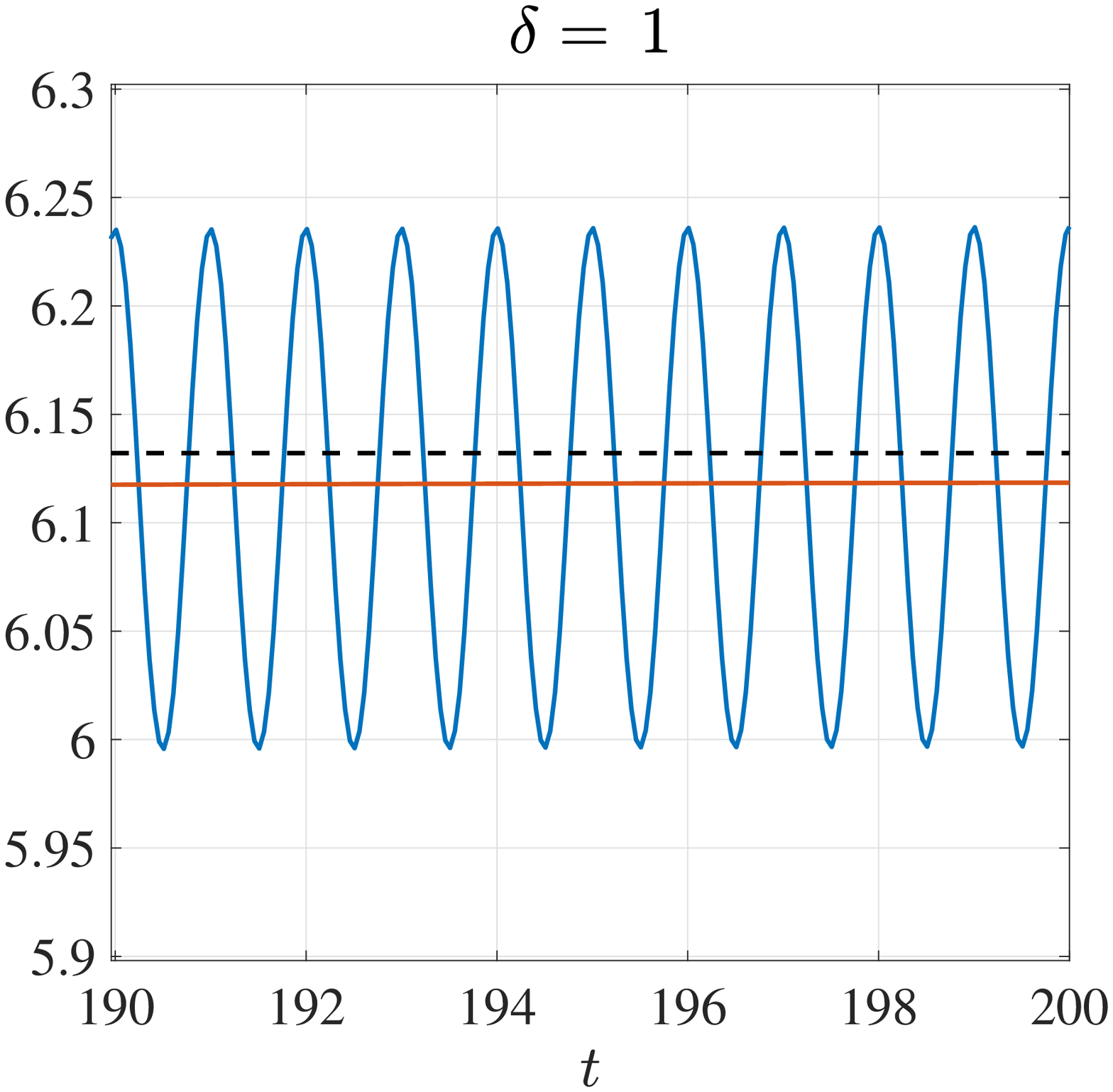}}
		\caption{} 
		\label{fig:LocalMinima_d}
	\end{subfigure}
	\caption{Behaviour of $x(t)$ and $x_a(t)$ computed through \eqref{eq:First_Order} (yellow) and \eqref{eq:average-Fourier} (red) with  $\gamma = 0.1$. A dither amplitude of $\delta = 0.5$ is not sufficient to let \eqref{eq:BasicES} (blue) to overcome local extrema whereas, for $\delta \in \{0.75,1\}$ the plot demonstrate that the set $\Delta = [x^\star-\delta,\, x^\star+\delta]$ is reached in finite time. In all these simulations, the averaging trajectories of \eqref{eq:First_Order} (yellow) get stuck at $x \approx 2.42$ where $\partial h(x)/\partial x = 0$.}
\label{fig:LocalMinima}
\end{figure}
}

{The behaviour of \eqref{eq:BasicES} in the case of local minima is shown in Figure \ref{fig:LocalMinima}, for the cost function depicted in Figure \ref{fig:LocalMinima_a}. A selection of a too-small $\delta$ could trap the ES \eqref{eq:BasicES} (blue) in the neighbourhood of local minima, see Figure \ref{fig:LocalMinima_b}. Then, following Lemma \ref{lemma:Reduced1}, for  sufficiently large $\delta$, the scheme \eqref{eq:BasicES} (blue) can pass through local minima, see Figures \ref{fig:LocalMinima_c} and \ref{fig:LocalMinima_d}. It is worth noting that, also in this case, the averaging based on the first-order Taylor expansion \eqref{eq:First_Order} (yellow) is less accurate than \eqref{eq:average-Fourier} (red) in describing the trajectories of \eqref{eq:BasicES} (blue). A comparison of Figures \ref{fig:LocalMinima_c} and \ref{fig:LocalMinima_d} confirms the correctness of the bounds provided in Lemma \ref{lemma:Reduced1}.}

Before investigating the performance of \eqref{eq:GlobalESGeneric}, the following test is performed to show that the classic ES suffers of large values of $|h(\cdot)|$. Indeed, as depicted in Figure \ref{fig:Mr} {(blue lines in subplots (a) and (c))}, while keeping $\gamma$ fixed, larger $M_r$ lead to more oscillatory behaviours. To mathematically support this result we observe that 
\begin{equation}
	\label{eq:Taylor}
	h(x+\delta u) = h(x) + R(x, \delta u)
\end{equation}
where $R(\cdot,\cdot)$ represents the remainder of the Taylor expansion around $x$ of $h(\cdot)$. Exploit the definition of the Lipschitz constant of $h(\cdot)$ and $|u(t)|_\infty \le 1$ to bound the remainder from above as
\begin{equation}
	\label{eq:BoundRemainder}
	|h(x+\delta u) - h(x)| = |R(x, \delta u)| \le L_r\delta.
\end{equation}
Substitute \eqref{eq:Taylor} into \eqref{eq:BasicES} 
\begin{equation}
	\label{eq:ClassicESTaylor}
	\dot{x} = -\gamma h(x+\delta u) u = -\gamma (h(x)+R(x,\delta u)) u
\end{equation}
and investigate the following support system
\begin{equation}
	\label{eq:ClassicESTaylorApprox}
	\dot{x}_1 =-\gamma h(x_1)  u\qquad x(0) = x_{10}
\end{equation}
conceivable as approximation of \eqref{eq:ClassicESTaylor} for $|h(x)| \gg L_r\delta$. Let $H(x):= \int h(x)^{-1} dx$ and solve \eqref{eq:ClassicESTaylorApprox} by parts as
\begin{equation}
	x_1(t) = H^{-1}(H(x_{10}) -\gamma (\cos(t)-1) )
\end{equation}
which is a pure oscillation whose amplitude is proportional to $|H(x_{10})|${, evident in subplot (c) of Figure \ref{fig:Mr}.}
\begin{figure}[t!]
	\centering
		\begin{subfigure}{.48\columnwidth}
		\includegraphics[width=\textwidth]{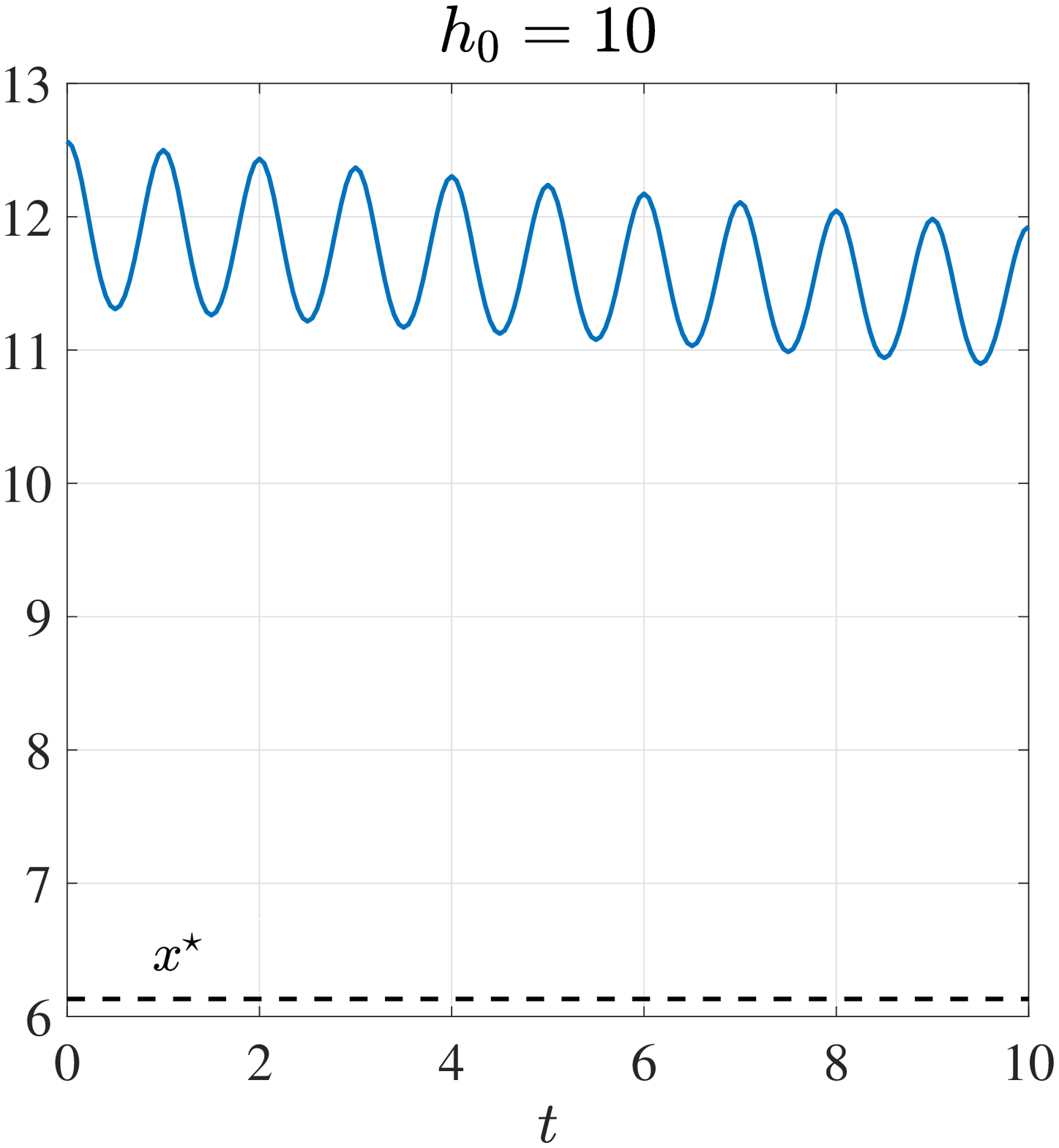}
		\caption{} 
	\end{subfigure}
		\begin{subfigure}{.48\columnwidth}
	\includegraphics[width=\textwidth]{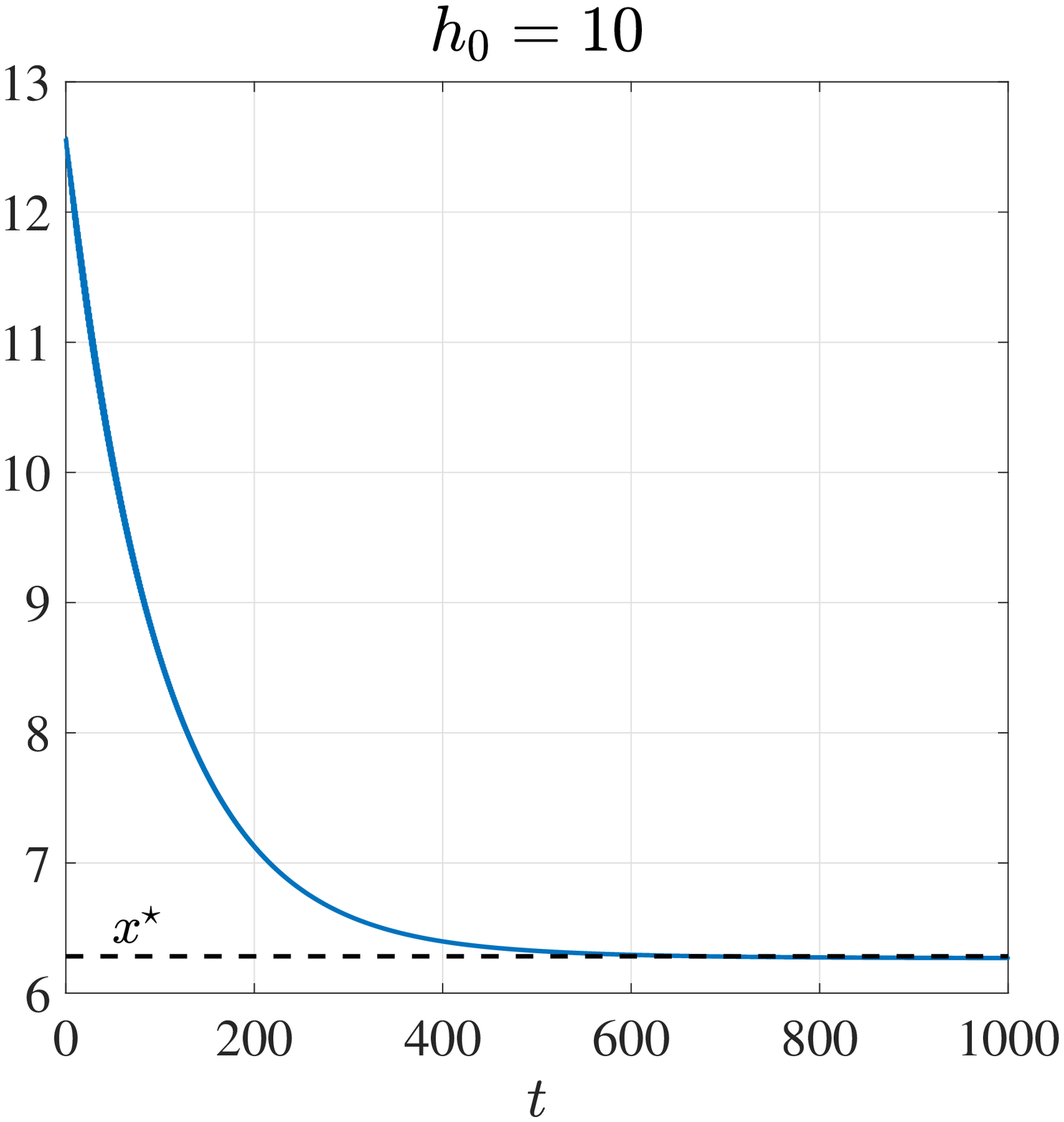}
	\caption{} 
\end{subfigure}
		\begin{subfigure}{.48\columnwidth}
	\includegraphics[width=\textwidth]{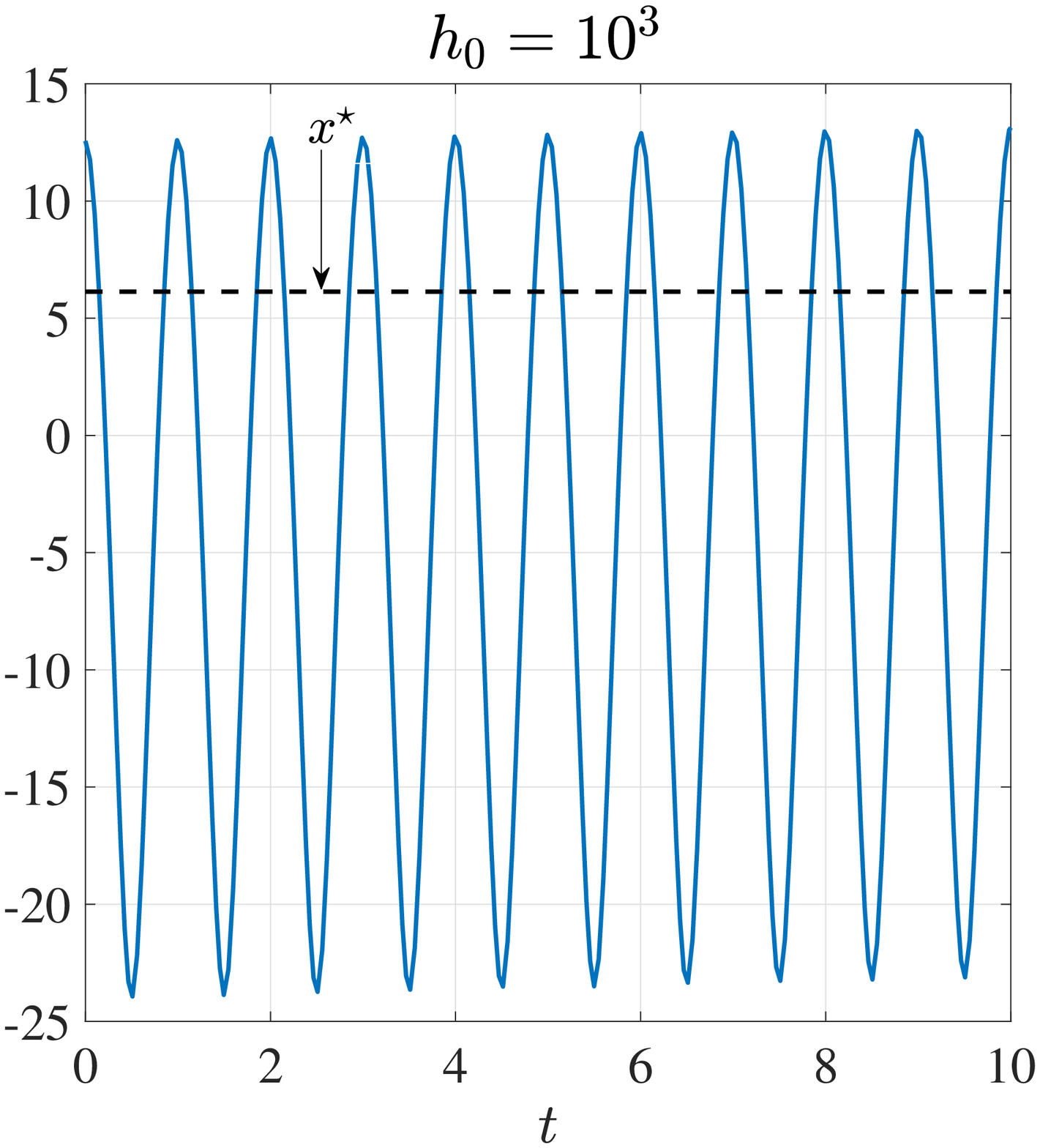}
	\caption{} 
\end{subfigure}
		\begin{subfigure}{.48\columnwidth}
	\includegraphics[width=\textwidth]{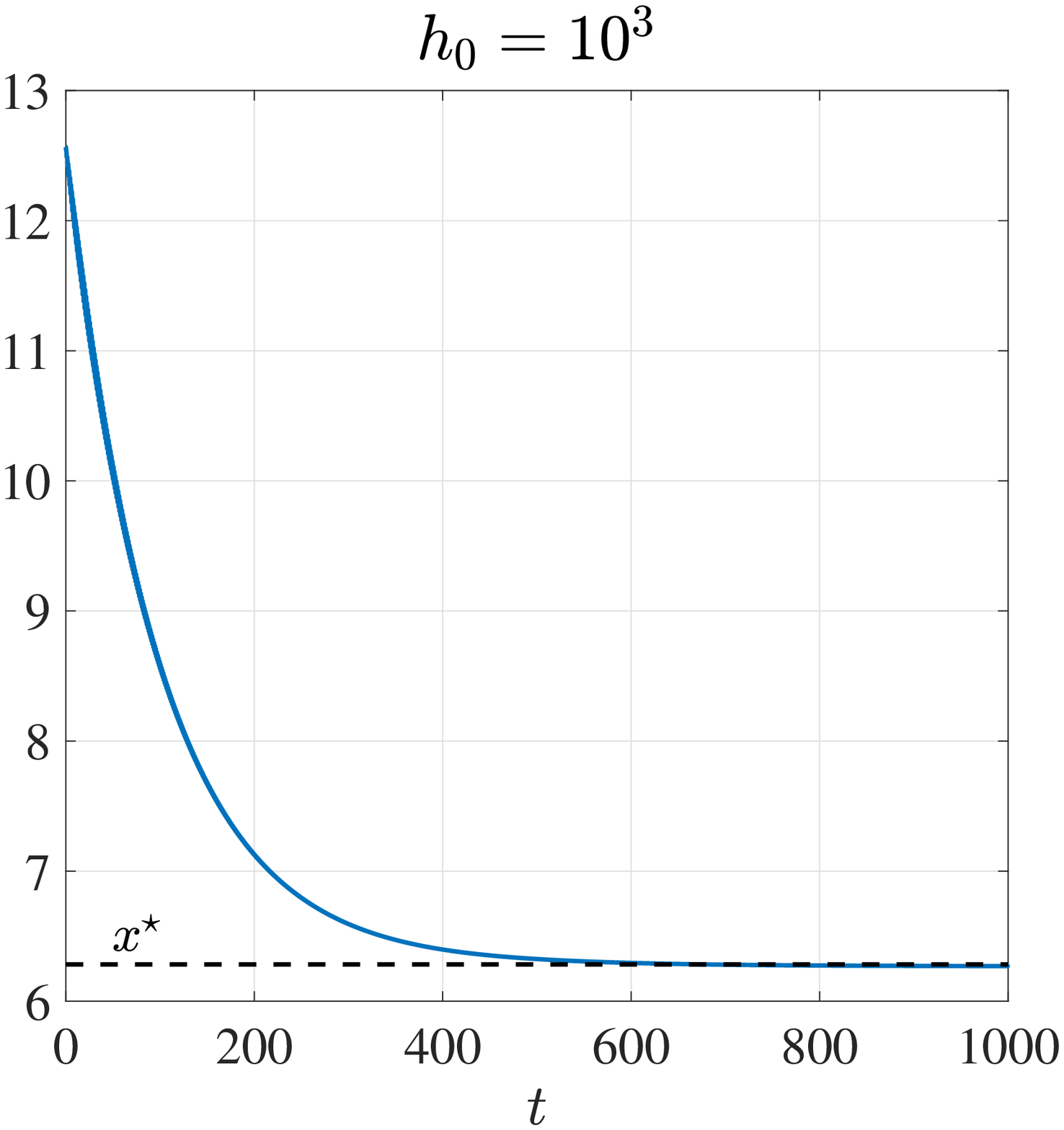}
	\caption{} 
\end{subfigure}
	\caption{Comparison of the behaviours of the classic ES (left) and that improved through the high-pass filter (right). In these simulations, we set $A = 0$, $x_0 = 4\pi$, and we increased the value of $h_0$ from $10$ to $10^3$. Subplots (a) and (c) demonstrate that, keeping fixed $\gamma$, the classic ES (blue) becomes oscillatory for large values of the cost function. On the opposite, subplots (b) and (d) show that the ES improved with the high-pass filter demonstrates a converge rate, uniform in $M_r$.	These simulations are performed with $\gamma = \delta = 0.1$.}
	\label{fig:Mr}
\end{figure}

The performance of the ES improved with the high-pass filter  \eqref{eq:GlobalESGeneric} is tested increasing the cost function.  Figure \ref{fig:Mr} shows that the convergence rate of \eqref{eq:GlobalESGeneric} is uniform with the amplitude of the cost. To conclude, the stability properties of the dynamics of the high-pass filter are tested in the simulations of Figure \ref{fig:NovelES} for the same test conditions of Figure \ref{fig:Mr}. These tests show that the average \eqref{eq:DotBarYAV} (red) accurately tracks the exact dynamics \eqref{eq:bary} (blue).

\begin{figure}[h!]
	\centering
	\includegraphics[width=0.48\columnwidth]{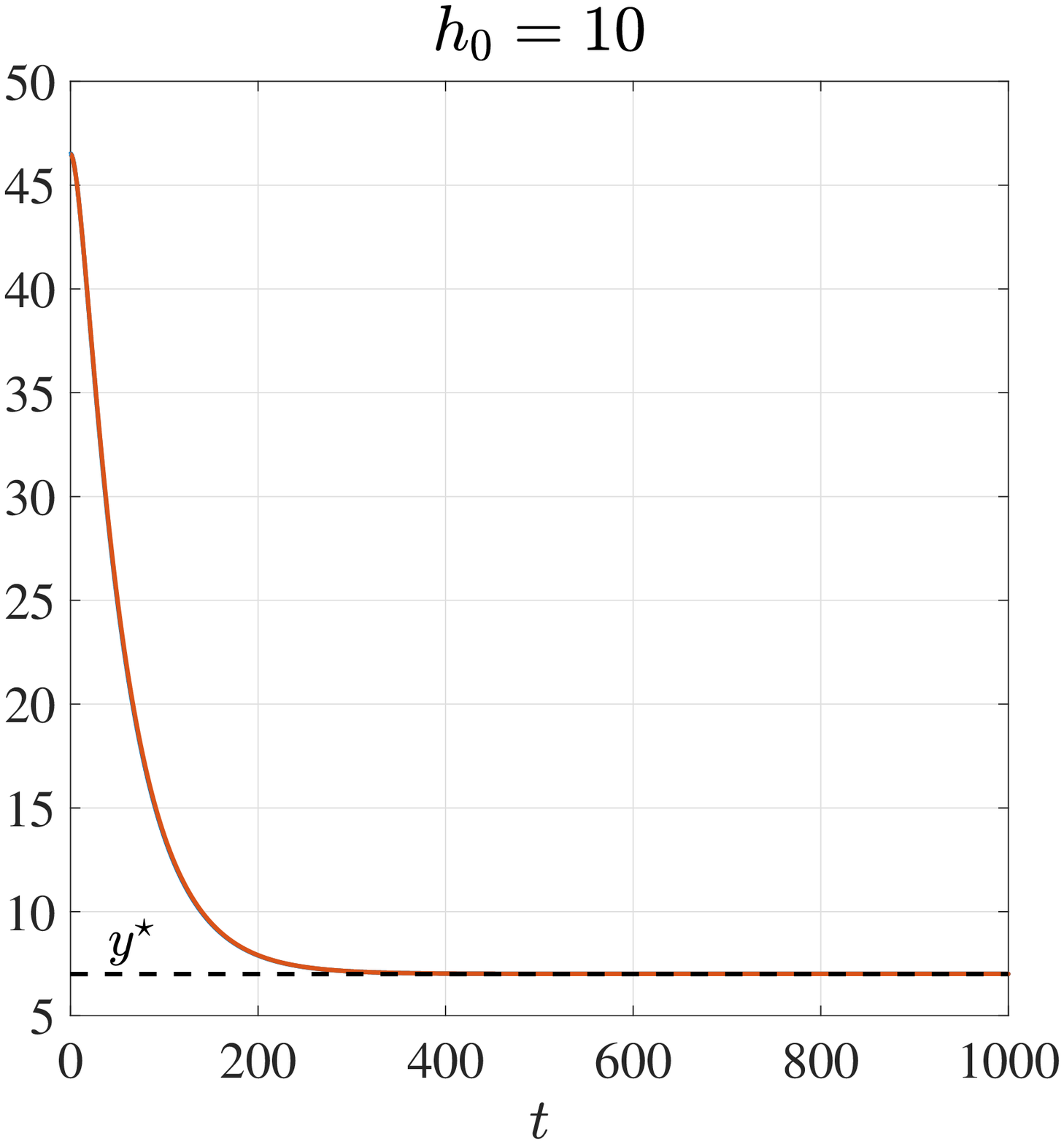}\llap{\raisebox{0.9cm}{\includegraphics[height=2.7cm]{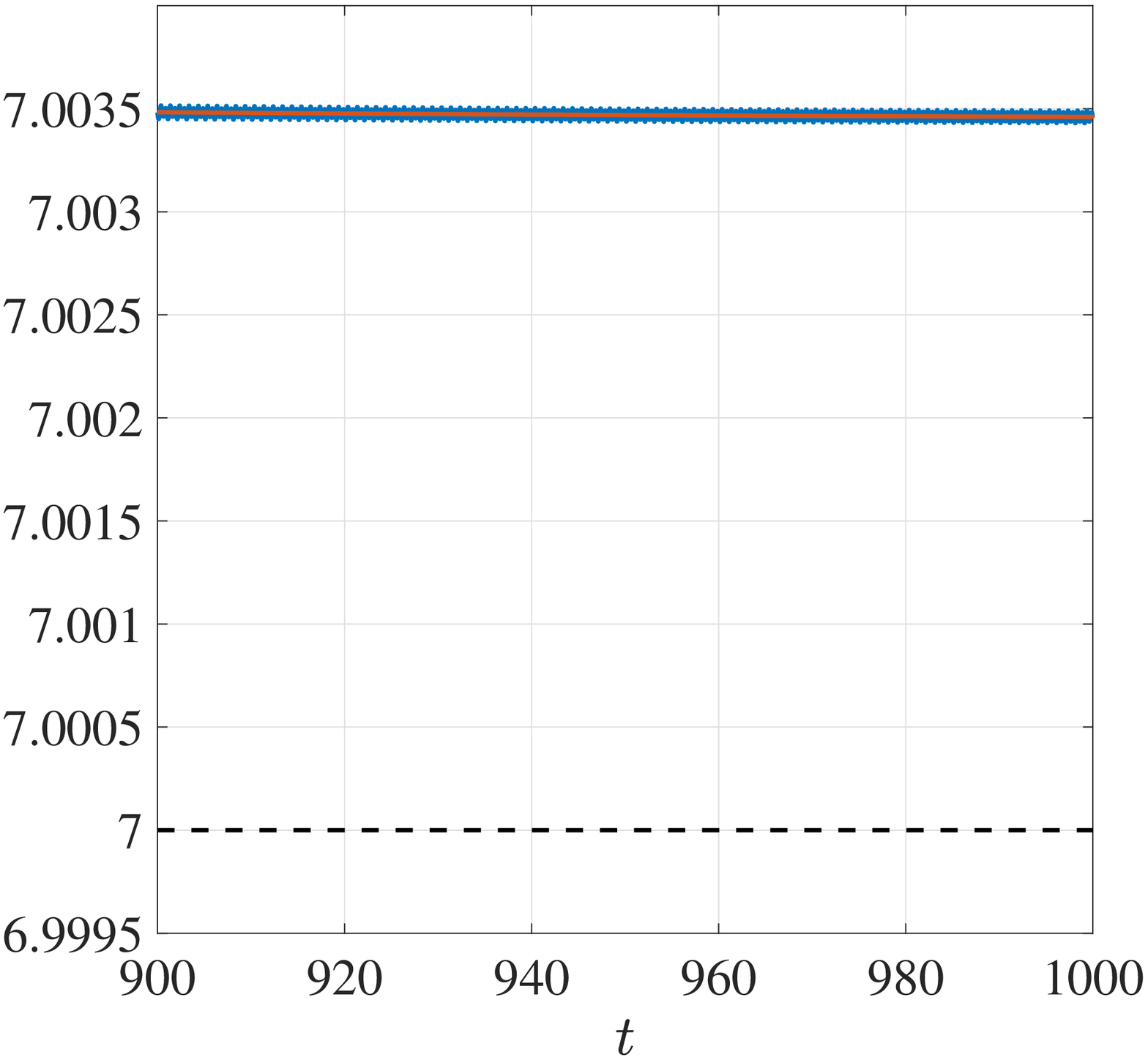}}}\llap{\raisebox{1.55cm}{\includegraphics[height=1.5cm]{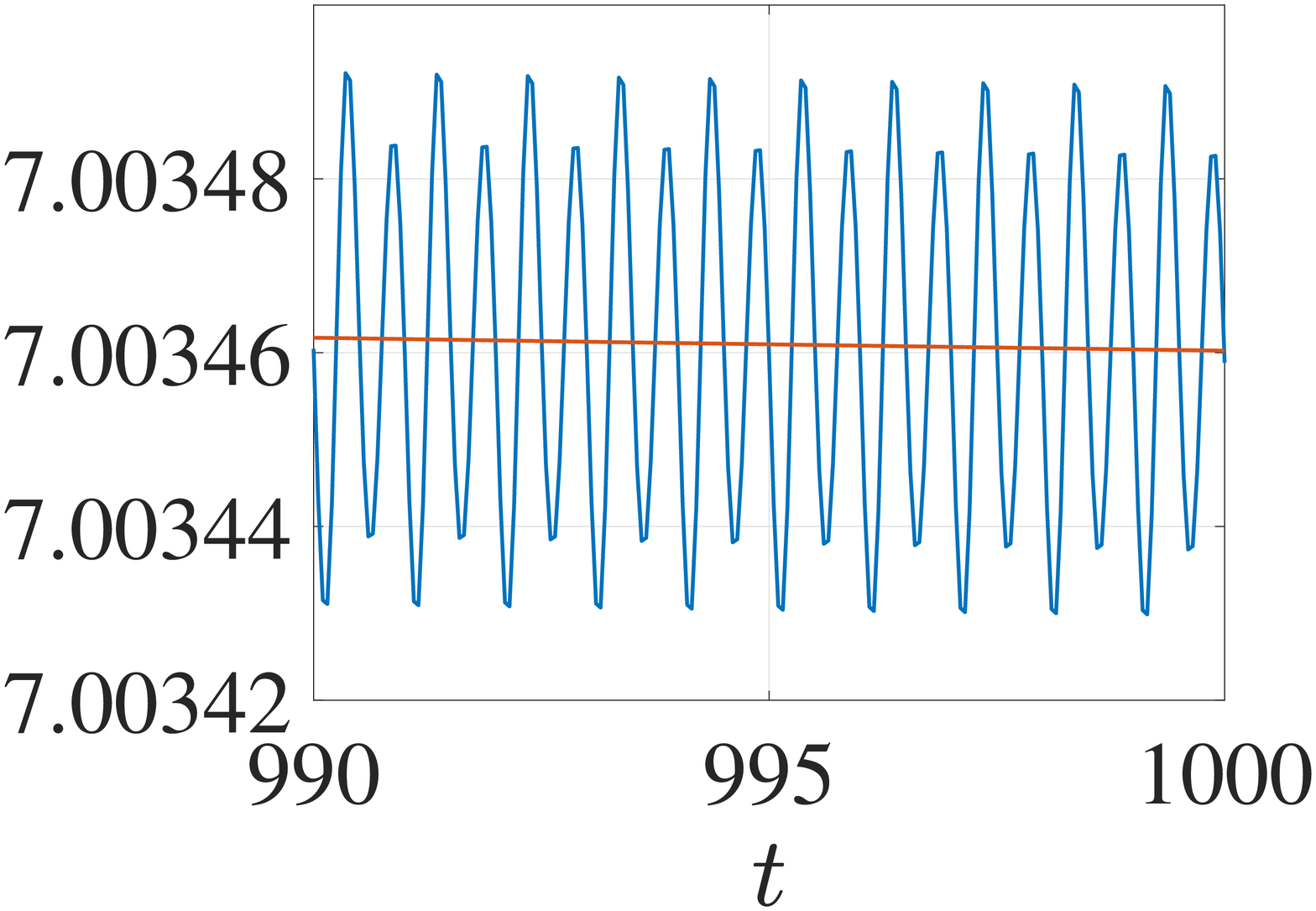}}}
	\includegraphics[width=0.48\columnwidth]{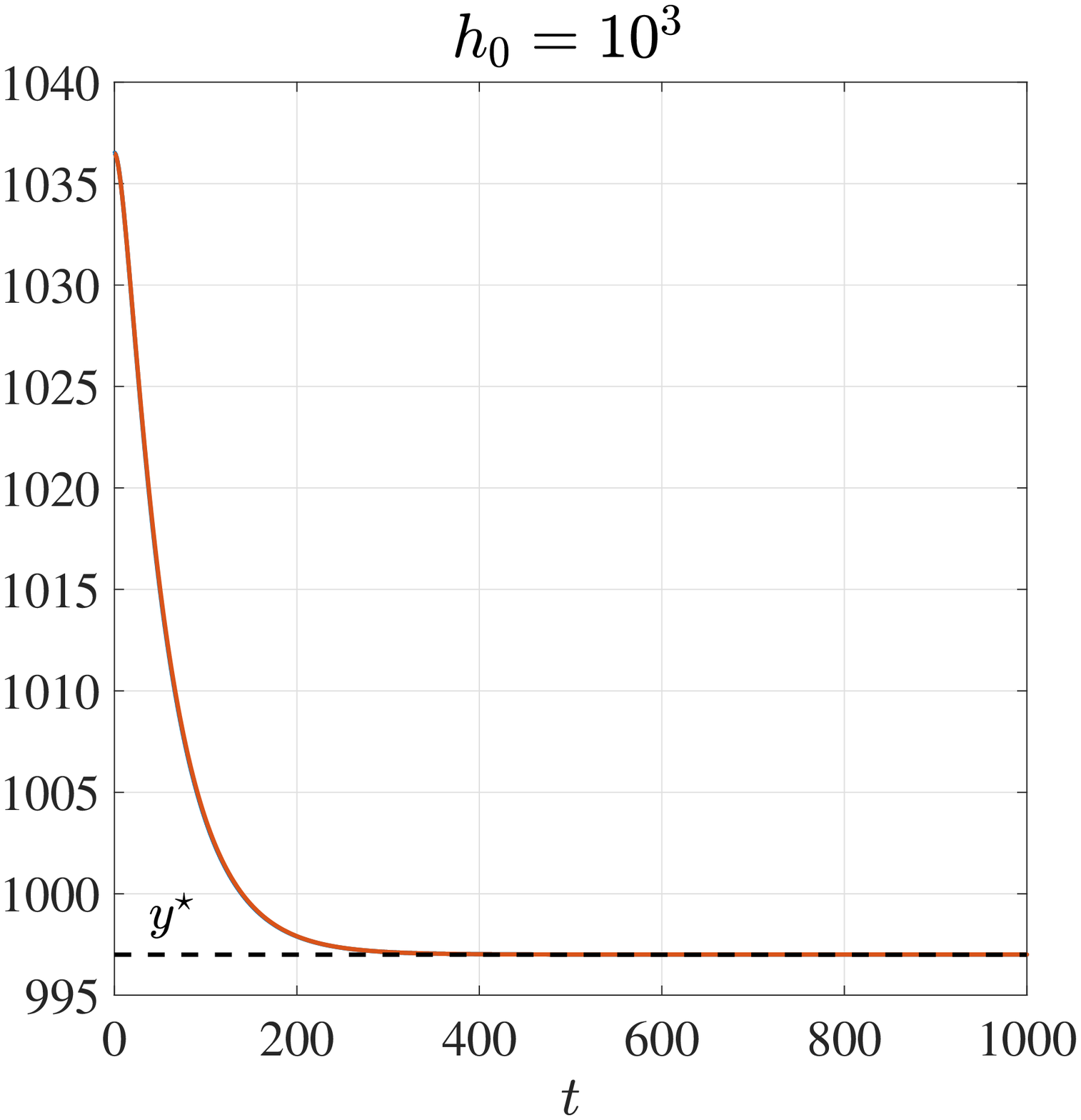}\llap{\raisebox{0.9cm}{\includegraphics[height=2.7cm]{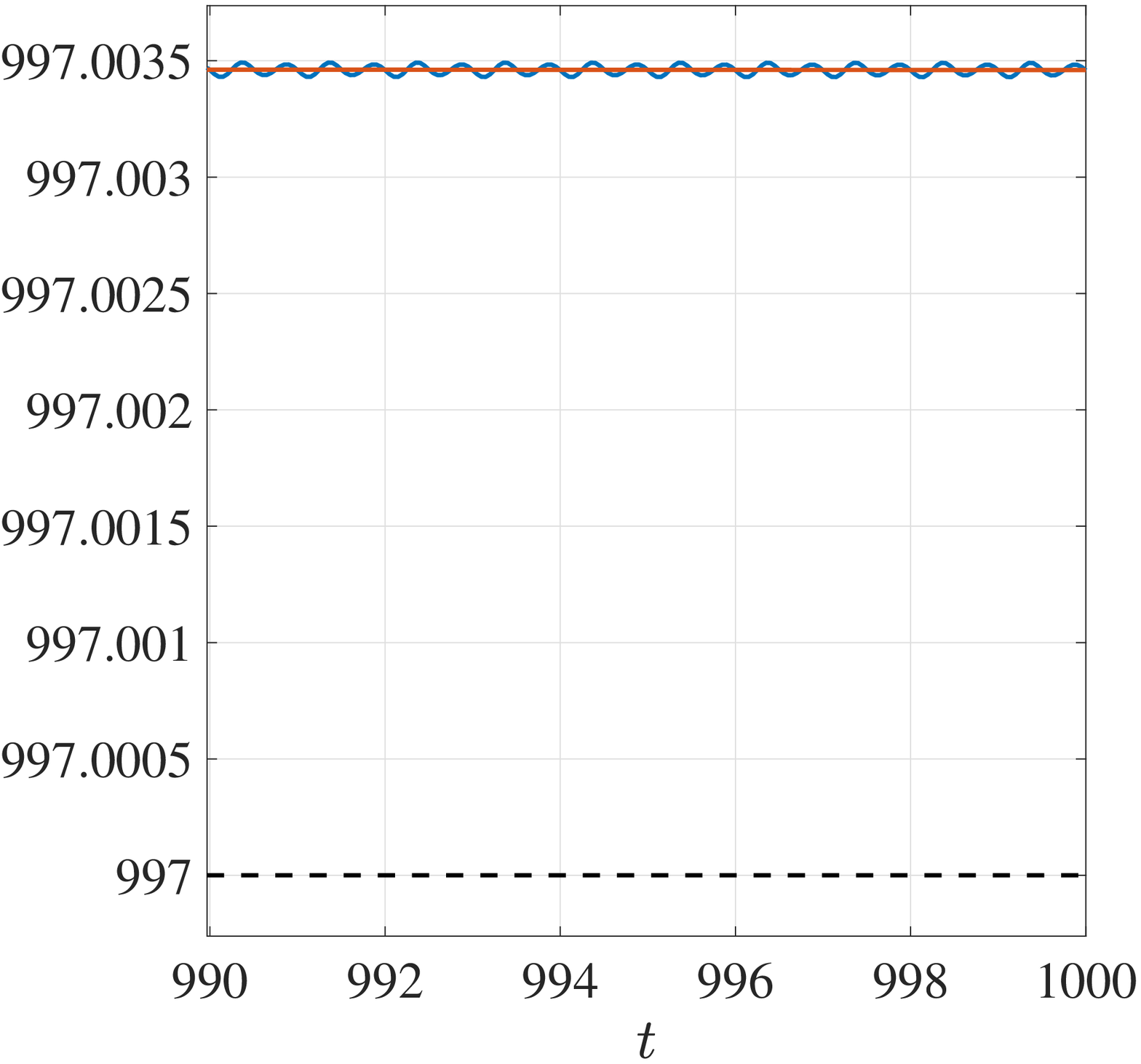}}}\llap{\raisebox{1.45cm}{\includegraphics[height=1.5cm]{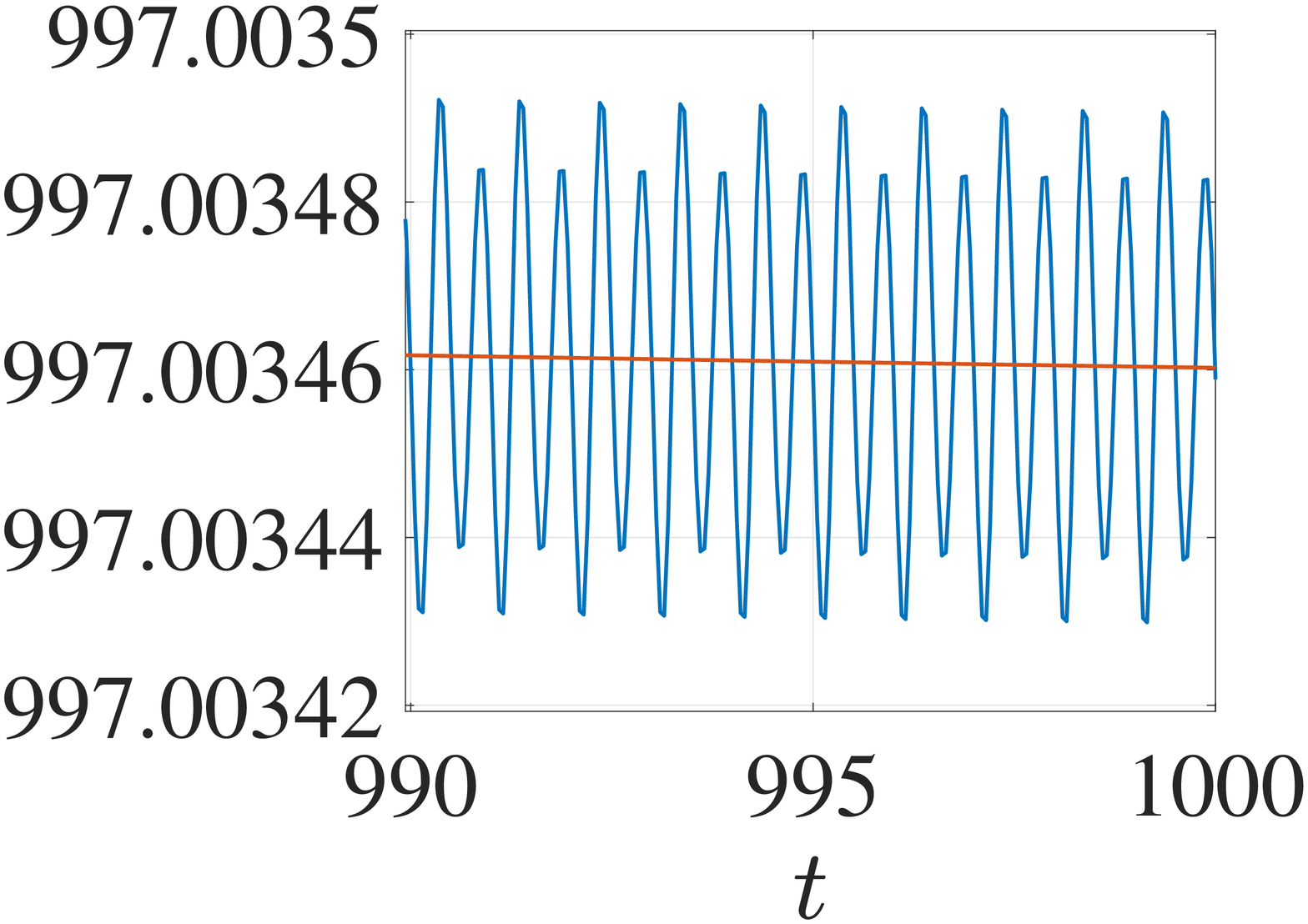}}}
	\caption{The behaviour of the HPF-ES does not change for increasing $M_r$. In these simulations the value of $h_0$ is increased from $10$ to $10^3$ demonstrating that, keeping fixed $\gamma$, the average \eqref{eq:DotBarYAV} (red) accurately tracks the exact dynamics \eqref{eq:bary} (blue). These simulations are performed with $\gamma = \delta = 0.1$.}
	\label{fig:NovelES}
\end{figure}

Through the {proposed} simulations we confirm three results: a) the classic ES can deal with cost functions with local \ifthenelse{\boolean{CONF}}{{saddle points}}{minima} where, on the opposite, the average of the classic ES obtained through the Taylor expansion gets stacked (Figure \ref{fig:2ndMap_b} and Figure \ref{fig:LocalMinima}); b) the classic ES and its average via Taylor expansion do not converge to the same equilibrium point (Figure \ref{fig:2ndMap_c});  c) in case of large cost functions the classic ES behaves as an oscillator whose amplitude is proportional to the value of the cost function (Figure \ref{fig:Mr}c). Vice versa, the adoption of the high-pass filter makes the basic ES convergence rate uniform with respect to the amplitude of the cost function (Figures \ref{fig:Mr}b and \ref{fig:Mr}d).  

\ifthenelse{\boolean{CONF}}{}{
\subsection{Solar Panel Optimisation}

As demonstrated in \cite{Li2016Detection}, the ES algorithm can be efficiently applied to optimise the performance of solar panels. In this section, we compare the performance of \eqref{eq:BasicES} and \eqref{eq:GlobalESGeneric} when applied to the photovoltaic array model presented in \cite{Li2016Detection}, and briefly recalled hereafter.

Let $\zeta \in \mathbb{R}^2$ be the state of the photovoltaic array modelling the dynamics of the output voltage $x \in\mathbb{R}$, forced by the input voltage $z \in \mathbb{R}$. Then, the plant is modelled as a linear time-invariant system
\begin{equation}
	\label{eq:PCModel}
\begin{aligned}
	\dot{\zeta} = &\, A \zeta + B z\\
	x =&\, C\zeta
\end{aligned}
\end{equation}
with $A$ Hurwitz. Since $A$ is Hurwitz, there exists a linear map $L\,:\mathbb{R}\to \mathbb{R}$ such that the set $\Omega := \{x = L z,\,z\in\mathbb{R}\}$ is globally exponentially stable. 
On the other hand, let $w \in \mathbb{R}$ be the solar panel output current, then there exists a non-linear smooth map $q\,:\,\mathbb{R}\to \mathbb{R}$ such that $w = q(x)$. The cost function (to be maximised) is represented by the output power $y:=x\,w$ which, constrained on $\Omega$, reads as $y|_\Omega = h(z):= Lz\,q(Lz)$. Figure \ref{fig:PVCells} (top-left) graphically represents  $h(z)$. Use \eqref{eq:BasicES} and \eqref{eq:PCModel}, and let $(x_{b}(t),y_{b}(t))$ be provided by the solutions of 
\begin{equation*}
	\begin{aligned}
		\dot{z}_b = &\, -\gamma y_{b}  u(t)&&z(0)=z_0\\
		\dot{\zeta}_b = &\, A \zeta_b + B z_b&&\zeta_b(0)=\zeta_0\\
		x_{b} =&\, C\zeta_b\\
		y_{b}  =&\, x_{b}\,q(x_{b}) + \nu(t)
	\end{aligned}
\end{equation*}
where $\nu(t) \in \mathbb{R}$ represents the watt-meter noise. 
Moreover, use \eqref{eq:GlobalESGeneric} and \eqref{eq:PCModel}, and let  $(x_{h}(t),y_{h}(t))$ be provided by the solutions of 
\begin{equation*}
	\begin{aligned}
		\dot{z}_h = &\, -\gamma (y_{h}-\bar{y}) u(t)&&z(0)=z_0\\
		\dot{\bar{y}} =&\, \gamma(y_{h}-\bar{y})&&\zeta_h(0)=\zeta_0 \\
		\dot{\zeta}_h = &\, A \zeta_h + B z_h &&\bar{y}(0)=\bar{y}_0\\
		x_{h} =&\, C\zeta_h\\
		y_{h} =&\, x_{h}\,q(x_{h})+ \nu(t).
	\end{aligned}
\end{equation*}
Then, the performance indices of these two schemes are compared in Figure \ref{fig:PVCells} in which the following parameters have been adopted: $\omega = 115.2$ rad/s, $\gamma = 0.5$ J$^{-1}$,  $\delta = 0.05$ V, and $\|\nu(t)\|_\infty = 1$ W. As previously described, the convergence speed of the basic ES \eqref{eq:BasicES} is deeply influenced by the magnitude of the cost function whereas \eqref{eq:GlobalESGeneric} is not. Indeed, as can be seen in the bottom-right plot of Figure \ref{fig:PVCells}, the average speed of the ES with the high-pass filter is nearly constant and motivated by the quasi-linearity of the map $h(x)$ for $x \in [0,\, 20]$ V. At the opposite, and accordingly to the study of the toy-example of Figure \ref{fig:Mr}, as soon as the cost increases the zero-mean oscillations of \eqref{eq:BasicES} dominate the slope-related contents thus stopping the seeking machinery. The final result is that \eqref{eq:GlobalESGeneric} performs much better than \eqref{eq:BasicES}.
\begin{figure}
	\centering
			\begin{subfigure}{.48\columnwidth}
		\includegraphics[width=\textwidth]{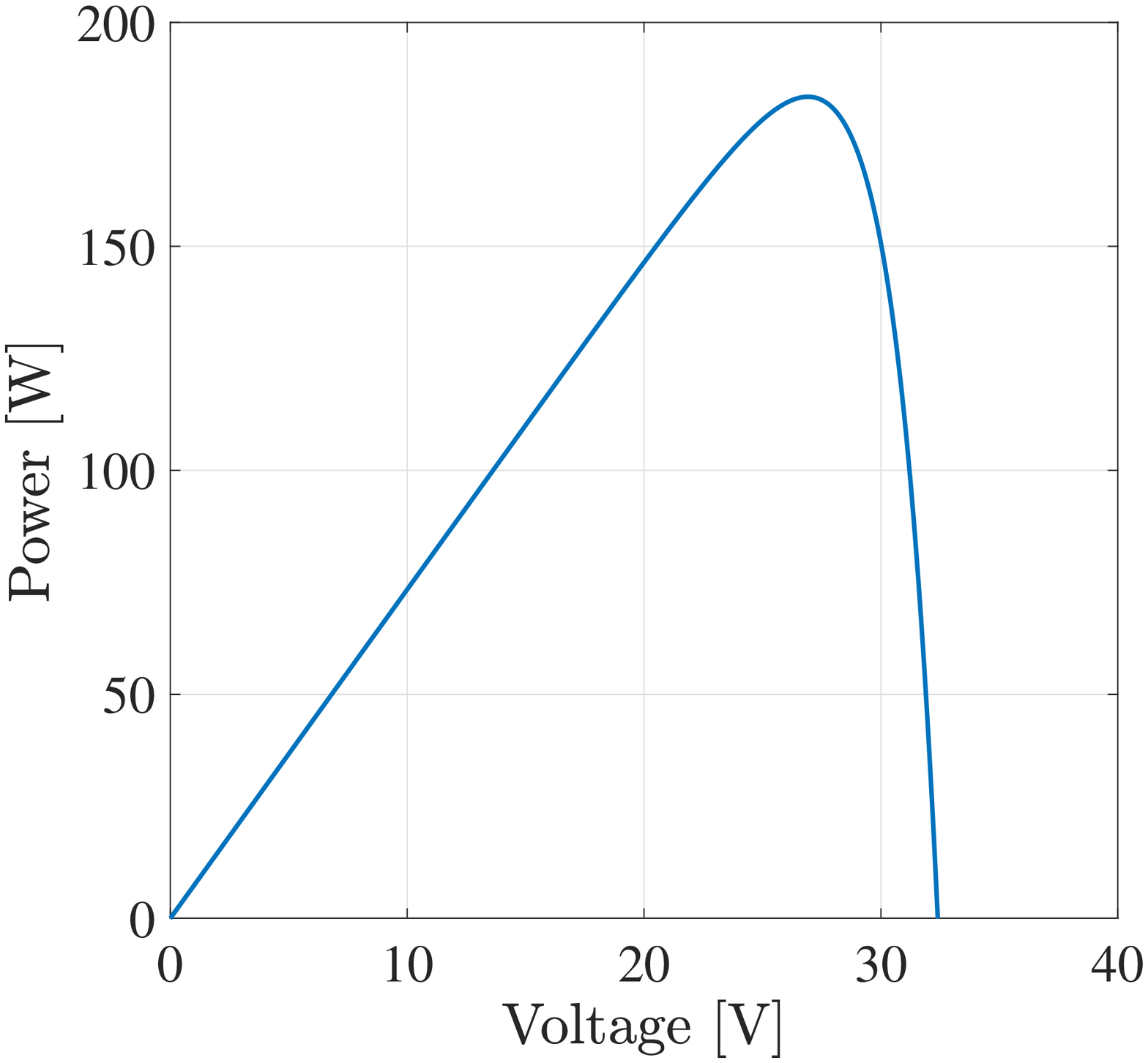}
		\caption{} 
	\end{subfigure}
			\begin{subfigure}{.48\columnwidth}
	\includegraphics[width=\textwidth]{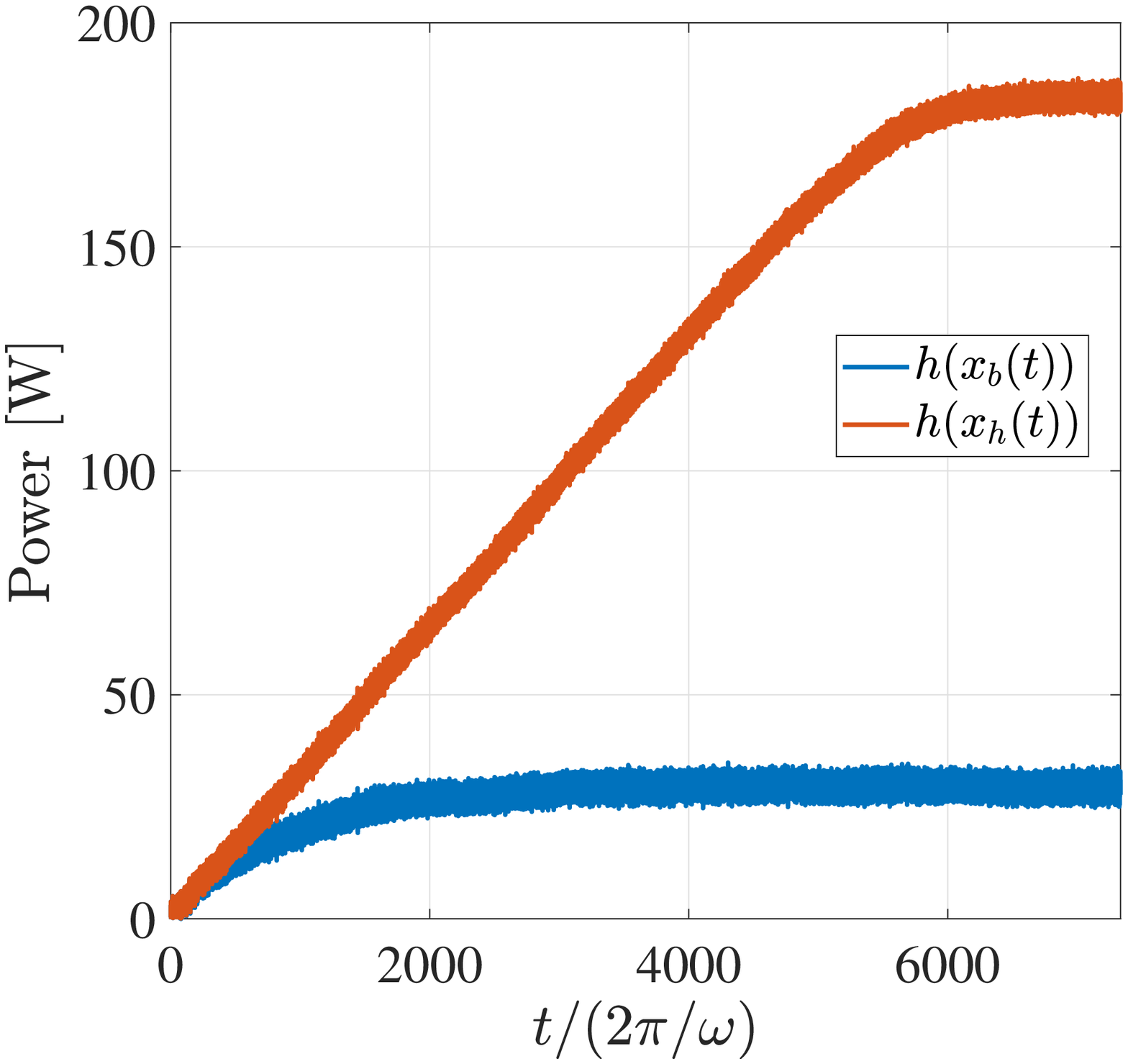}
	\caption{} 
\end{subfigure}
			\begin{subfigure}{.48\columnwidth}
	\includegraphics[width=\textwidth]{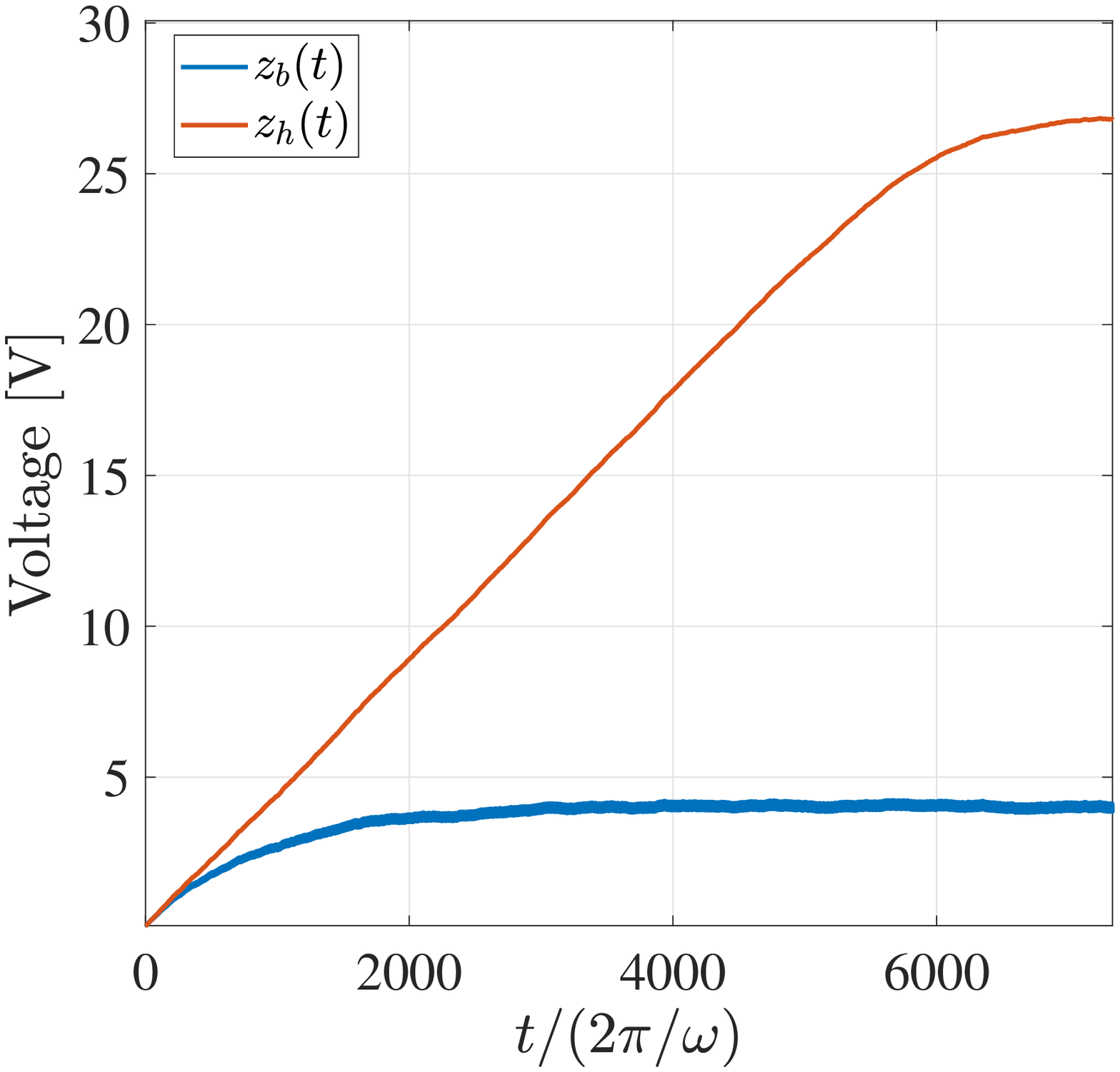}
	\caption{} 
\end{subfigure}
			\begin{subfigure}{.48\columnwidth}
	\includegraphics[width=\textwidth]{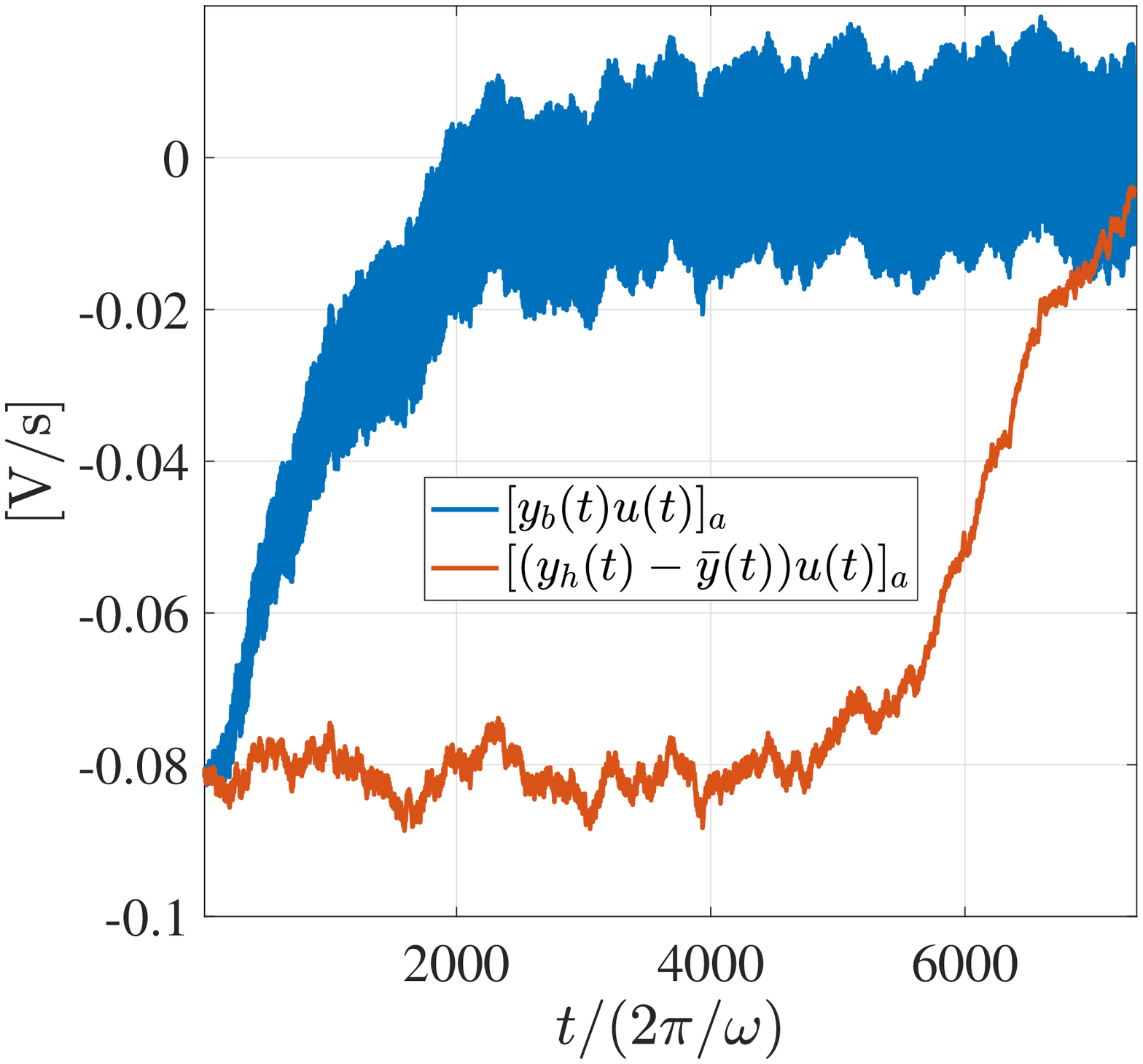}
	\caption{} 
\end{subfigure}
	\caption{(a) Representation of the static map $y|_\Omega=h(z)$. (b) Comparison of the output powers provided by the adoption of the base ES \eqref{eq:BasicES} and the advanced \eqref{eq:GlobalESGeneric}. (c) Comparison of the maximiser voltage determined by the application of \eqref{eq:BasicES} and \eqref{eq:GlobalESGeneric}. (d) Crude (numerical) averaging of $y_{b}(t) u(t)$ and $(y_{h}(t)-\bar{y}(t)) u(t)$.}
	\label{fig:PVCells}
\end{figure}
}

\section{Conclusions}
\label{sec:Concl}

This paper {deals with} two well-known extremum seeking schemes {to show that they work under less restrictive assumptions}. Relying on averaging and Fourier-series arguments, it is demonstrated that these schemes can deal with \ifthenelse{\boolean{CONF}}{{strictly quasi}}{non}-convex cost functions making the global minimiser (assumed to be unique) semi-global practically stable. Moreover, it is shown that the presence of a high-pass filter elaborating the cost function makes the tuning of the parameters independent of the cost magnitude. \ifthenelse{\boolean{CONF}}{}{This feature is then exploited to demonstrate that the global minimiser has a global domain of attraction if the cost function is globally Lipschitz and certain regularity conditions are verified by the average system.}
\vspace{5mm}

\bibliographystyle{ieeetr}
\bibliography{PaperBib}


\ifthenelse{\boolean{CONF}}{

\clearpage
\newpage

\twocolumn[
\begin{@twocolumnfalse}
	{
		\centering
		\LARGE
		Supplement to the paper:\\ Uniform quasi-convex optimisation via Extremum Seeking\\[1em]
	}
	{
		\centering
		Nicola Mimmo, Lorenzo Marconi, and Giuseppe Notarstefano\\[3em]
	}
\end{@twocolumnfalse}
]
}{}

\begin{appendices}

\section{Proof of Lemma \ref{lemma:Reduced1}}
\label{sec:ProofLemmaReduced1}

\ifthenelse{\boolean{CONF}}{Using the fact that
	\[
	\int_{0}^{1} h(s)\sin(2\pi t)\,dt = 0
	\]
we observe{, from \eqref{eq:FourierCoeff},} that 
\[
\dfrac{b_{1,\delta}(x)}{2} =\int_0^1 [h(x+\delta \sin(2\pi t))-h(x)]\sin(2\pi t)\,dt.
\]
This latter term can be split as the sum of the following two terms 
\[
\begin{aligned}
	\int_{0}^{1/2}\left[h(x +\delta\sin(2\pi t))- h(x)\right]\sin(2\pi t)\,dt   \\
	\int_{0}^{1/2}\left[h(x)-h( x-\delta\sin(2\pi t)) \right]\sin(2\pi t)\,dt.
\end{aligned}
\]
By using the first part of Assumption \ref{hyp:Unicity}, with $x_2=x +\delta\sin(2\pi t)$ and $x_1=x$ in the first term and with $x_2 = x$ and $x_1 =x -\delta\sin(2\pi t)$ in the second one,  it's immediately seen that
\[
\int_0^1 [h(x+\delta \sin(2\pi t))-h(x)]\sin(2\pi t)\,dt  \geq \underline{\delta}^\star (\delta) 
\]  	 
for all $x$ such that $x \geq x^\star + \delta$. Similarly, using the second part of  Assumption \ref{hyp:Unicity} with the same choices of $x_1$ and $x_2$, we observe that  
\[
\int_0^1 [h(x+\delta \sin(2\pi t))-h(x)]\sin(2\pi t)\,dt  \leq -\underline{\delta}^\star (\delta)  
\]  	 
for all $x$ such that $x \leq x^\star - \delta$.  Overall, we thus obtain that 
\begin{equation} \label{barb}
	\begin{aligned}
		\dfrac{b_{1,\delta}(x)}{2} &\geq \underline{\delta}^\star (\delta) \qquad &\forall \, x \geq x^\star + \delta\\
		\dfrac{b_{1,\delta}(x)}{2} &\leq  - \underline{\delta}^\star (\delta) \qquad &\forall \, x \leq x^\star - \delta\,.
	\end{aligned}  
\end{equation}
}{
  By letting $\tilde{h}(x):= h(x)-m(x)$ and using the fact that
\[
\int_{0}^{1} m(s)\sin(2\pi t)\,dt = 0
\]
we observe{, from \eqref{eq:FourierCoeff},}  that 
\[
\begin{aligned}
&\dfrac{b_{1,\delta}(x)}{2} =\, \int_0^1 h(x+\delta \sin(2\pi t))\sin(2\pi t)\,dt\\
	=&\, \int_0^1 [\tilde{h}(x+\delta \sin(2\pi t))+m(x+\delta \sin(2\pi t))]\sin(2\pi t)\,dt\\
	=&\, \int_0^1 \tilde{h}(x+\delta \sin(2\pi t))\sin(2\pi t)\,dt\\
		&\, +\int_0^1 [m(x+\delta \sin(2\pi t))-m(x)]\sin(2\pi t)\,dt.
\end{aligned}
\]

As for the first term, note that {$\forall\,x\,\in\mathbb{R}$}
\[
\left|\int_0^1 \tilde{h}(x+\delta \sin(2\pi t))\sin(2\pi t)\,dt\right| \le A\,.
\]
As for the second term, we observe that it 
can be split as the sum of the following two terms 
\[
\begin{aligned}
	 \int_{0}^{1/2}\left[m(x +\delta\sin(2\pi t))- m(x)\right]\sin(2\pi t)\,dt   \\
	 \int_{0}^{1/2}\left[m(x)-m( x-\delta\sin(2\pi t)) \right]\sin(2\pi t)\,dt.
\end{aligned}
\]
By using {point 2)} of Assumption \ref{hyp:Unicity}, with $x_2=x +\delta\sin(2\pi t)$ and $x_1=x$ in the first term, and with $x_2 = x$ and $x_1 =x -\delta\sin(2\pi t)$ in the second one,  {it holds that}
\[
\int_0^1 [m(x+\delta \sin(2\pi t))-m(x)]\sin(2\pi t)\,dt  \geq \underline{\delta}^\star (\delta), 
\]  	 
{with $\underline{\delta}^\star (\delta)$ is defined in \eqref{eq:deltastar}, }for all $x$ such that $x \geq x^\star + \delta$. Similarly, using the second part of  Assumption \ref{hyp:Unicity} with the same choices of $x_1$ and $x_2$, we observe that  
\[
\int_0^1 [m(x+\delta \sin(2\pi t))-m(x)]\sin(2\pi t)\,dt  \leq -\underline{\delta}^\star (\delta)  
\]  	 
for all $x$ such that $x \leq x^\star - \delta$. Overall, we thus obtain that 
\[
\begin{aligned}
	\dfrac{b_{1,\delta}(x)}{2} &\geq -A +  \underline{\delta}^\star (\delta) \qquad &\forall \, x \geq x^\star + \delta\\
	\dfrac{b_{1,\delta}(x)}{2} &\leq A -  \underline{\delta}^\star (\delta) \qquad &\forall \, x \leq x^\star - \delta\,,
\end{aligned}  
\]
namely, using the constraint on $A$ and $\delta$ specified in the claim,
\begin{equation} \label{barb}
	\begin{aligned}
		\dfrac{b_{1,\delta}(x)}{2} &\geq \bar b \qquad &\forall \, x \geq x^\star + \delta\\
		\dfrac{b_{1,\delta}(x)}{2} &\leq  -  \bar b \qquad &\forall \, x \leq x^\star - \delta\,.
	\end{aligned}  
\end{equation}}

Standard arguments can be then used to claim  that the trajectories of (\ref{eq:average-Fourier}) are ultimately bounded and enter in finite time (dependent on $\gamma$)  the set $\bar \Delta=[x^\star-\delta, x^\star+\delta]$. This implies that the omega limit set of the set $\bar \Delta$ of the system (\ref{eq:average-Fourier}), denoted by $\omega(\bar \Delta)$ (see \cite{hale2006dynamics}), is non-empty, it is locally asymptotically stable for the trajectories of (\ref{eq:average-Fourier}) and $\omega(\bar \Delta) \subseteq \bar \Delta$. Furthermore, the arguments in Lemma 1 in \cite{Marconi2010Robust} (see also \cite{marconi2008essential}) show that there exists a set ${\cal A}_\delta \subseteq \bar \Delta$, $\omega(\bar \Delta) \subseteq {\cal A}_\delta $,  which is locally exponentially stable for (\ref{eq:average-Fourier}). This proves item a). 

To prove item b), we exploit the fact that $b_{1,\delta}(s)$ is a continuous function fulfilling (\ref{barb}) and we prove that $b_{1,\delta}(s)$ is a strictly monotonic function for $s \in (x^\star-\delta, x^\star+\delta)$.  

To this end, compute
\[
\dfrac{\partial b_{1,\delta}(x)}{\partial x} = \int_{0}^{1} \left.\dfrac{\partial h(s)}{\partial s}\right|_{x+\delta\sin(2\pi t)}\sin(2\pi t) dt
\]
By Taylor expansion of ${\partial h(s)}/{\partial s}$ around $x$ we obtain  
\[
\begin{aligned}
	\dfrac{\partial h(s)}{\partial s} &= \left.\dfrac{\partial h(s)}{\partial s}\right|_x + \left.\dfrac{\partial^2 h(s)}{\partial s^2}\right|_x(s-x)+ O((s-x)^2).\\
\end{aligned}
\]
By evaluating the previous expression with $s=x+\delta \sin (2\pi t)$,  it turns out that
\[
\begin{aligned}
	\left .\dfrac{\partial h(s)}{\partial s}\right |_{s=x+\delta \sin (2\pi t)} =&\, 
	\left.\dfrac{\partial h(s)}{\partial s}\right|_x \\
	&+
	\left.\dfrac{\partial^2 h(s)}{\partial 		s^2}\right|_{x} \delta \sin (2\pi t)+ O(\delta^2)\\
\end{aligned}
\]

Hence 	
\[
\begin{array}{rcl}
	\dfrac{\partial b_{1,\delta}(x)}{\partial x} &=&
	\displaystyle \left.\dfrac{\partial h(s)}{\partial s}\right|_x   \int_{0}^{1} \sin(2\pi t) +\\
	&& \displaystyle \delta \left.\dfrac{\partial^2 h(s)}{\partial s^2}\right|_{x} \int_{0}^{1} \sin^2(2\pi t) dt + O(\delta^2)\\[3mm]
	&=& \delta \pi  \left.\dfrac{\partial^2 h(s)}{\partial s^2}\right|_{x} + O(\delta^2)\,.
\end{array}
\]

By Assumption 2, there exists a constant $\bar c>0$ such that $\left.{\partial^2 h(s)}/{\partial s^2}\right|_{x^\star} =\bar c$. Furthermore,  continuity of ${\partial^2 h(s)}/{\partial s^2}$  implies that there exists an interval $\Delta$, independent of $\delta$ and with $x^\star \in \Delta$,  such that $\dfrac{\partial^2 h(s)}{\partial s^2} \geq \bar c/2$ for all $s \in \Delta$. From this, it follows that
\[
\dfrac{\partial b_{1,\delta}(x)}{\partial x} \geq \delta \dfrac{\bar c}{4} + O(\delta^2)
\]
for all $x \in \Delta$. The result claimed in item b then follows by taking $\delta^\star$ sufficiently small so that $[x^\star-\delta^\star, x^\star+\delta^\star] \subseteq \Delta$ and  $|O(\delta^2)| < \delta {\bar c}/{4}$. 
To prove the last point of item b) we evaluate 
	\[
	\begin{aligned}
		b_{1,\delta}(x^\star) =& \, 2\int_{0}^{1}h(x^\star+\delta\sin(2\pi t))\sin(2\pi t)\,dt\\
		=&\, 2\int_{0}^{1/2}h(x^\star+\delta\sin(2\pi t))\sin(2\pi t)\,dt  \\
		&- 2\int_{0}^{1/2} h(x^\star-\delta\sin(2\pi t))\sin(2\pi t)\,dt 
	\end{aligned}
	\]
	in which we have exploited $\sin(2\pi t) = -\sin(2\pi t + \pi)$ for all $t \in \mathbb{R}$. Then, assuming $\delta \le \delta^\star$,  and $h(x^\star-s) = h(x^\star+s)$ for any $s \in [0,\,\delta]$ it follows that 
	\[
	\begin{aligned}
		b_{1,\delta}(x^\star) =&\, 2\int_{0}^{1/2}h(x^\star+\delta\sin(2\pi t))\sin(2\pi t)\,dt\\ 
		&- 2\int_{0}^{1/2}h(x^\star+\delta\sin(2\pi t))\sin(2\pi t)\,dt =0.
	\end{aligned}
	\]

\section{Proof of Proposition \ref{prop:BasicES}}
\label{app:Detailed_BasicESNotGlobal_R}
The proof of Proposition \ref{prop:BasicES} follows by standard averaging results as C1) and C2) of Lemma \ref{lemma:Averaging} (or, for instance,  Theorem 2.6.1 and Theorem 4.2.1 in \cite{Sanders1985Averaging}) and it is thus just sketched with a particular eye in showing the dependence of certain key quantities from $M_r$.   

In the first part of the proof  we show that trajectories of \eqref{eq:BasicES} and \eqref{eq:average-Fourier} originating from $x(0) = x_a(0)$ are arbitrary closed for an arbitrary large finite timespan if $\gamma$ is taken sufficiently small. By using the definition of $r_0$, the fact that ${\cal A}_\delta \subseteq [x^\star-	\delta, x^\star+\delta]$ and $| x_a(t)|_{{\cal A}_\delta} \leq \beta(| x_a(0)|_{{\cal A}_\delta},0)$ for all $t\geq 0$, it turns out that  
\[
\begin{aligned}
	|x_a(t)-x^\star| & \leq \beta(|x_a(0)|_{{\cal A}_\delta},0) + \delta \leq \beta(|x_a(0) - x^\star|,0) +\delta\\
	& \leq
	\beta(r_0,0) + \delta = r-d-\delta 
\end{aligned}
\]
for all $t \geq 0$. This implies that $|x_a(t) + \delta u(t) - x^\star| \leq r - d < r$ for all $t\geq 0$. Furthermore, for all $x(t)$ such that $|x(t) - x_a(t)| \leq d$ we have  $|x(t) + \delta u(t) - x^\star| \leq r$. These facts will used later in conjunction with Assumption \ref{hyp:Unicity}.

Let
\begin{equation}
	\label{eq:epsilonBasicES0}
	\epsilon(x_a,t):= \int_0^t h(x_a+\delta u(\tau))u(\tau)-\dfrac{b_{1,\delta}(x_a)}{2} \,d\tau
\end{equation}
and note that $\epsilon(x_a,\cdot)$ is $1$-periodic with $\epsilon(x_a,n)=0$ for all $n \in \mathbb{N}$. 
\ifthenelse{\boolean{LONG}}{
	Pick $N \in \mathbb{N}$ as the largest integer lower than $t$ and decompose the domain $[0,\,t]$ as the union of $[n-1, n)$ and $[N,\,t]$, with $n = 1, \cdots, N$. Then
	\begin{equation}
		\epsilon(x_a,t)	=\int_N^t h(x_a+\delta u(\tau))u(\tau)-\dfrac{b_{1,\delta}(x_a)}{2} \,d\tau \label{eq:epsilonBasicES}.
	\end{equation}
	Now, add and subtract $h(x_a)\,u(\tau)$ into the integral \eqref{eq:epsilonBasicES}, and note that
	\begin{equation}
		\begin{aligned}
			\left|(h(x_a+\delta u(\tau))\pm h(x_a))u(\tau)-\dfrac{b_{1,\delta}(x_a)}{2}\right| \le\\
			\left|(h(x_a+\delta u(\tau))- h(x_a))u(\tau)\right|+\left|\dfrac{b_{1,\delta}(x_a)}{2}\right|\\
			+|h(x_a)||u(\tau)|.
		\end{aligned}
	\end{equation}
	By Assumption \ref{hyp:Existence}, the first term in the previous relation can be bounded as 
	\begin{equation}
		\label{eq:Lipschitz}
		\left|(h(x_a+\delta u(\tau))- h(x_a))u(\tau)\right| \le L_r  \delta.
	\end{equation}
	Similarly, by definition \eqref{eq:FourierCoeff} and Assumption \ref{hyp:Existence}, the second term can be bounded as 
	\begin{equation}
		\label{remark1}
		\begin{aligned}
			\dfrac{|b_{1,\delta}(x_a)|}{2} =&\, \left|\int_0^1 h(x_a+\delta u(t)) u(t)\,dt\right| \\
			=&\,\left|\int_0^1 (h(x_a+\delta u(t))-h(x_a))u(t)\,dt\right| \\
			\le&\,\int_0^1 \left|h(x_a+\delta u(t))-h(x_a)\right|\,dt \le L_r \delta. 
		\end{aligned}
	\end{equation}
	As for the third term, we  have $|h(x_a)||u(\cdot)| < M_r$.	Overall,
	$|\epsilon(x_a,t)|$ cen be bounded as
}{Add and subtract $h(x_a)u(\tau)$ into the integral of \eqref{eq:epsilonBasicES0}, use the triangle inequality and Assumption \ref{hyp:Unicity} to bound $|\epsilon(x_a,t)|$ as}
\begin{align}
	&|\epsilon(x_a,t)|\le  \bar \epsilon(L_r,M_r, \delta):=2 L_r \delta + M_r  \label{eq:epsilonBasicESBound}\,.		
\end{align}

Define $z(t) := x_a(t)-\gamma\epsilon(x_a(t),t)$, and note that,  using \eqref{eq:epsilonBasicESBound},
\begin{equation} \label{x-xa}
	\begin{aligned}
		|x(t)-x_a(t)|&=|x(t)-z(t)-x_a(t)+z(t)| \\ & \leq  |x(t)-z(t)| +\gamma \bar \epsilon(L_r,M_r, \delta).
	\end{aligned}
\end{equation}
We consider now the relation
\[
x(t)-z(t) = x(0)-z(0)+\int_0^t \dot{x}(\tau)-\dot{z}(\tau) \,d\tau
\]
and we observe that 
\begin{equation}
	\label{eq:xzDiff}
	\begin{aligned}
		\dot{x}(t)&-\dot{z}(t) =\\ &-\gamma\left(h(x(t)+\delta u(t))u(t)-\dfrac{b_{1,\delta}(x_a(t))}{2}\right)\\
		&-\left.\gamma^2\dfrac{\partial\epsilon(x,t)}{\partial x}\right|_{x = x_a(t)} \dfrac{b_{1,\delta}(x_a(t))}{2}\\
		& +\gamma\left.\dfrac{\partial \epsilon(x,t) }{\partial t}\right|_{x = x_a(t)}. 
	\end{aligned}
\end{equation}
\ifthenelse{\boolean{LONG}}{
	By adding and subtracting the term
	\begin{equation}
		\gamma \,(\,h(z(t)+\delta u(t)) + h(x_a(t)+\delta u(t)) \,)\,u(t)
	\end{equation}
	to \eqref{eq:xzDiff} and exploiting \eqref{eq:epsilonBasicES0}, we obtain
	\begin{equation}
		\label{eq:xzDiff2}
		\begin{aligned}
			\dot{x}(t)&-\dot{z}(t) =\\
			& -\gamma(h(x(t)+\delta u(t))-h(z(t)+\delta u(t)))u(t) \\
			&-\gamma(h(z(t)+\delta u(t))-h(x_a(t)+\delta u(t)))u(t)\\ 
			&-\left.\gamma^2\dfrac{\partial\epsilon(x,t)}{\partial x}\right|_{x = x_a(t)} \dfrac{b_{1,\delta}(x_a(t))}{2}.
		\end{aligned}
	\end{equation}
	By using Assumption \ref{hyp:Existence} we observe that for all $x(t)$  satisfying $| x(t) - x_a(t)| \leq d$ and for all $z(t)$ satisfying 
	\begin{equation} 
		\label{eq:z+d}
		|z(t) + \delta u(t) - x^\star| \le r
	\end{equation}
	we have
	\begin{equation}
		\label{eq:SomeBounds}
		\begin{aligned}
			|h(x(t)+\delta u(t))-h(z(t)+\delta u(t))| \le &\, L_r |x(t)-z(t)|\\
			|h(z(t)+\delta u(t))-h(x_a(t)+\delta u(t))|\le & \,\gamma L_r |\epsilon(x_a(t),t)|.
		\end{aligned}
	\end{equation}
	Relation (\ref{eq:z+d}) is fulfilled if $|z(t) - x_a(t)| \leq d$, which, by recalling the definition of $z$ and the bound (\ref{eq:epsilonBasicESBound}), is true if
}{Let} $\gamma \leq \gamma^\star_1$ with 
\begin{equation}
	\label{eq:gamma1star}
	\gamma_1^\star := \dfrac{\bar c}{ \bar \epsilon(L_r,M_r, \delta)}
\end{equation}
in which $\bar c$ is any positive number with $\bar c < d$. \ifthenelse{\boolean{LONG}}{
	Now use \eqref{eq:FourierCoeff} into \eqref{eq:epsilonBasicES} and use $|u(\cdot)|\le 1$  to obtain
	\begin{align}
		\left|\left.\dfrac{\partial\epsilon(x,t)}{\partial x}\right|_{x = x_a}\right| \le &\int_N^t \left|\left.\dfrac{\partial h(x)}{\partial x}\right|_{x=x_a+\delta u(\tau)}u(\tau)\right|\,d\tau\nonumber\\
		&+\dfrac{1}{2}\int_N^t \left|\left.\dfrac{\partial b_{1,\delta}(x)}{\partial x}\right|_{x=x_a}  \right|\,d\tau\nonumber\\
		\le &\int_N^t \left|\left.\dfrac{\partial h(x)}{\partial x}\right|_{x=x_a+\delta u(\tau)}\right| \,d\tau\nonumber\\
		&+\int_N^t  \int_{0}^{1} \left|\left.\dfrac{\partial h(x)}{\partial x} \right|_{x=x_a+\delta u(s)}\right| \,ds \,d\tau\nonumber\\			
		\le &\, 2L_r. \label{eq:SomeBounds2}
	\end{align}
	Overall, by using \eqref{remark1}, \eqref{eq:epsilonBasicESBound}, \eqref{eq:SomeBounds} and \eqref{eq:SomeBounds2}, 
}{Then,}
it turns out that 
\ifthenelse{\boolean{LONG}}{
	\[
	\begin{aligned}
		|x(t)-z&(t)| \le |x(0)-z(0)|+\int_0^t \left| \dot{x}(\tau)-\dot{z}(\tau) \right| \,d\tau\\
		\le &\, |x(0)-z(0)|+ \gamma L_r \int_0^t  |x(\tau)-z(\tau)| \,d\tau\\
		&+ \gamma^2  \bar k(L_r,M_r, \delta)  t	
	\end{aligned}
	\]
	where $\bar k(L_r,M_r, \delta) := L_r  \bar \epsilon(L_r,M_r, \delta) + 2 L_r^2 \delta$, namely,  by the  Gronwall Lemma [\cite{Sanders1985Averaging}, \S 1.3] to obtain
}{}
\begin{equation}
	\label{eq:BoundNormXmZGronwall}
	\begin{aligned}
		|x(t)-z(t)| \le & |x(0)-z(0)| e^{\gamma L_r t}\\
		&+\left(e^{\gamma L_r t} -1\right) \dfrac{\gamma \bar k(L_r,M_r, \delta)}{L_r }.
	\end{aligned}
\end{equation}
Now let  $\bar{t} > 0$ and $\bar c \in (0, d)$ be arbitrary numbers, and let $\gamma^\star(\bar t, \bar c, L_r, M_r, \delta)$ be defined as $\gamma^\star := \min\{\gamma_2^\star,\,\gamma_3^\star,\gamma_4^\star\}$ in which 
\begin{equation}\nonumber
	\begin{aligned}
		\gamma_2^\star := \dfrac{\bar c}{3 
			e^{ L_r \bar{t}}}\quad 
		\gamma_3^\star := \dfrac{\bar c L_r }{3 \bar k(L_r,M_r, \delta) (e^{ L_r \bar{t}}-1)}
	\end{aligned}
\end{equation}
and $\gamma_4^\star := \gamma_1^\star / 3$.
Using (\ref{x-xa}) and (\ref{eq:BoundNormXmZGronwall}), simple computations show that  for any $\gamma\in (0,\, \gamma^\star]$ 
\[
|x(t)-x_a(t)| \le  \bar c \quad \mbox{for all } t \in [0, \bar{t}/ \gamma]\,.			  
\]

We use now the local exponential stability of the attractor ${\cal A}_\delta$ to extend the previous bound for any $t \geq \bar{t}/ \gamma$. The assumption in question implies the existence of $\bar c_0>0$, $\kappa >0$, $\lambda>0$ such that the trajectories of \eqref{eq:average-Fourier} starting at $|x_a(0)|_{\mathcal{A}_\delta} \le \bar c_0$ are bounded by
\[
|x_a(t)|_{\mathcal{A}_\delta} \le \kappa |x_a(0)|_{\mathcal{A}_\delta} e^{-\gamma \lambda t}\,.
\]
Without loss of generality let  $\bar c_0 < {d / 2 \kappa}$ and fix $\bar c \in (0, \bar c_0)$. Furthermore. let $\bar{t}_1,\bar{t}_2 > 0$ be such that 
\[
\beta(r, \bar{t}_1) < \bar c_0-\bar c \qquad 
\bar{t}_2  > \dfrac{1}{\lambda} \ln\left(\dfrac{\kappa\, \bar c_0}{\bar c_0-\bar c}\right)\,,
\]
fix $\bar{t} = \max\{\bar{t}_1,\bar{t}_2\}$ and fix once for all $\gamma \in (0,\min\{\gamma^\star,1\})$, with $\gamma^\star(\bar t, \bar c, L_r, M_r, \delta)$ defined before. 
Now divide the time axis into sub-intervals of the form   
\[
I_n:=\left[n\dfrac{\bar{t}}{\gamma},\,(n+1)\dfrac{\bar{t}}{\gamma}\right) \quad n \in {\mathbb{N}}
\]
and, with $x_a(t,x_{a0})$ the trajectory of the average system at time $t$ with initial condition $x_{a0}$ at time $t=0$, let
\[
x_n(t) :=x_a(t-n{\bar{t}}/{\gamma},x(n{\bar{t}}/{\gamma})) \qquad \forall \, t \in I_n.
\]
Because of the first part of the proof and by the definition of $\bar c$, it turns out that  $|x(\bar{t}/\gamma)|_{\mathcal{A}_\delta} < \bar c_0$ and thus $|x_1(t)|_{\mathcal{A}_\delta} < \kappa \bar c_0 e^{- \gamma \lambda (t-\bar{t}/\gamma)}$ for all $t \in I_1$. The same arguments used in the first part of the proof show that   
\[	
|x(t)-x_1(t)| \leq \bar c \qquad \forall\,t \in I_1
\]
Moreover,
\[
\begin{aligned}
	|x(t)|_{\mathcal{A}_\delta} &\le |x_1(t)|_{\mathcal{A}_\delta} + |x(t)-x_1(t)| \\
	&\le \kappa \bar c_0 e^{-\gamma \lambda (t-\bar{t}/\gamma)} + \bar c \\&\le \kappa \bar c_0+\bar c \leq d && \forall \, t \in I_1
\end{aligned}
\]
and
\[
\begin{aligned}
	|x(2\bar{t}/\gamma)|_{\mathcal{A}_\delta} &\le \kappa \bar c_0 e^{-\gamma \lambda \bar{t}/\gamma} + \bar c\\
	&\le \kappa \bar c_0 e^{-\lambda \bar{t}_2} + \bar c <  \bar c_0.
\end{aligned}
\]
The last steps can be then iterated for all $n=2, 3, ...$ and this proves the Proposition.

\section{Proof of Lemma \ref{lemma:Reduced2}}
\label{sec:ProofLemmaReduced2}
This proof follows from standard cascade arguments in view of the result in Lemma \ref{lemma:Reduced1} and from the fact that $\gamma$ is a positive parameter. We only present some detail behind the construction of the (non-unique) function $\tau(\cdot)$.
 By (\ref{barb}) in Lemma \ref{lemma:Reduced1}   we have that the set $\bar \Delta =[x^\star- \delta, x^\star+\delta]$ is forward invariant and reached in a finite time $t^\star(\gamma, x_{a0})$ for the trajectories $x_a(t,x_{a0})$ of  (\ref{eq:DotTildeXAv_2}) with initial condition $x_{a0}$. Let $\hat \Delta$ a superset of $\bar \Delta$ and let $\hat{b}_{1,\delta}\,:\,\mathbb{R}\to \mathbb{R}$ a smooth  function defined as
\[
\hat{b}_{1,\delta}(x) =\left\{\begin{array}{cc}
b_{1,\delta}(x) & x \in \bar \Delta\\
0 & x \not\in \hat \Delta\,.
\end{array}
 \right.
\]
Denote with $\hat{x}_a(t,x_{a0})$ the flow of $\dot{\hat{x}}_a = -\gamma \hat{b}_{1,\delta}(\hat{x}_a)/2$ at time $t$ with initial condition $x_{a0}$ and 
note that for all $x_{a0} \in \bar \Delta$ we have $\hat{x}_a(t, x_{a0}) = {x}_a(t, x_{a0})$. Define the map $\tau\,:\,\mathbb{R} \to \mathbb{R}$ with
\[
\tau(x) = 	\dfrac{\gamma}{2} \int_{-\infty}^0 e^{\gamma\tau} a_{0,\delta}(\hat{x}_a(\tau,x))\,d\tau.
\]
Let $(x_a(t, x_{a0}),\bar{y}_a(t,\bar{y}_{a0},x_{a0}))$ be the solution of \eqref{eq:TildeAV_2} with initial conditions $(x_{a0},\bar{y}_{a0})$ and note that 
for any $x_{a0} \in \mathcal{A}_\delta $ the following  holds
\[
\begin{aligned}
	\bar{y}_a(t,\tau(x_{a0}),x_{a0}) =&\, \tau(x_{a0}) e^{-\gamma t}\\
	&\, + \dfrac{\gamma}{2}\int_{0}^t e^{-\gamma (t-\tau)} a_{0,\delta}(x_a(\tau,x_{a0}))\,d\tau\\
	=&\,   \dfrac{\gamma}{2} \int_{-\infty}^t e^{-\gamma (t-\tau)} a_{0,\delta}(\hat{x}_a(\tau,x_{a0}))\,d\tau \\
	=&\,   \dfrac{\gamma}{2} \int_{-\infty}^0 e^{\gamma \tau} a_{0,\delta}(\hat{x}_a(\tau+t,x_{a0}))\,d\tau \\
		=&\,   \dfrac{\gamma}{2} \int_{-\infty}^0 e^{\gamma \tau} a_{0,\delta}(\hat{x}_a(\tau,\hat{x}_a(t,x_{a0})))\,d\tau \\
		=&\, \tau(\hat{x}_a(t,x_{a0}))\\
		=&\, \tau(x_a(t,x_{a0}))\,.
\end{aligned}
\]
Since $\mathcal{A}_\delta$ is forward invariant for \eqref{eq:DotTildeXAv_2},  this implies that $\mathtt{graph}(\tau|_{\mathcal{A}_\delta})$ is forward invariant for \eqref{eq:TildeAV_2}. Point a) 
 then follows by using, for instance, the arguments in \cite{marconi2007output}. 
As for point b), adopt the same arguments to prove point b) of Lemma \ref{lemma:Reduced1}, use the cascade connection, and the forward invariance of $\mathtt{graph}(\tau|_{\mathcal{A}_\delta})$.

\section{Proof of Theorem \ref{prop:HPFES}}
\label{sec:DetailedProofHPFES}
First, the  trajectories of \eqref{eq:GlobalESGeneric}  are demonstrated to remain close to those of \eqref{eq:TildeAV_2} for a finite timespan. Second, exploiting the local exponential stability of $\mbox{\rm graph} \left. \tau \right |_{{\cal A}_\delta}$, the trajectories of \eqref{eq:GlobalESGeneric} are demonstrated to remain bounded and close to this set for an infinite time horizon.

Define $\eta = (x,\bar{y})$ and $\eta_a = (x_a,\bar{y}_a)$, and 
let
\[
\begin{aligned}
	F(\eta,t) & := \left(\begin{array}{c}
		- \varepsilon_\delta({x},\bar{y},t)   u(t) \\
		\varepsilon_\delta({x},\bar{y},t)
	\end{array}\right)
\end{aligned}
\]
and 
\[
\begin{aligned}
	F_a(\eta_a) & := \left(\begin{array}{c}
		-\dfrac{b_{1,\delta}(x_a)}{2} \\
		-\bar{y}_a + \dfrac{a_{0,\delta}(x_a)}{2}
	\end{array}\right).
\end{aligned}
\]

Moreover, let
\[
\epsilon(\eta_a,t) := \int_0^t F(\eta_a,\tau)-F_a(\eta_a) \,d \tau
\]
and note that $\epsilon(\eta_a,t)$ is $1$-periodic and $\epsilon(\eta_a,n)=0$ for each $n \in\mathbb{N}
$ and for any $\eta_a \in \mathbb{R}^2$. \ifthenelse{\boolean{LONG}}{
	Let $N \in \mathbb{N}$ be the largest integer such that $N \le t$, then
	\[
	\begin{aligned}
		\epsilon(\eta_a,t) =&  \sum_{n = 1}^N\int_{n-1}^{n} F(\eta_a,\tau)-F_a(\eta_a)
		\,d \tau \\
		& +\int_{N}^{t} 
		F(\eta_a,\tau)-F_a(\eta_a)
		\,d \tau\\
		=& \int_{N}^{t} 
		F(\eta_a,\tau)-F_a(\eta_a)
		\,d \tau.
	\end{aligned}
	\] 
}{}
Let  $\epsilon_1(\eta_a,t)$ and $\epsilon_2(\eta_a,t)$ be the first and second entries of $\epsilon(\eta_a,t)$. \ifthenelse{\boolean{LONG}}{
	We now find a bound for these two quantities.
	
	As a starting point, let $e_{y_a}:= a_{0,\delta}(x_a)/2-\bar{y}_a$ and compute
	\begin{equation}
		\label{eq:DynEYA}
			\dot{e}_{y_a}= -\gamma e_{y_a}  -\dfrac{\gamma}{4}\dfrac{\partial a_{0,\delta}(x_a)}{\partial x_a}b_{1,\delta}(x_a) \quad
			e_{y_a}(0) = 0
	\end{equation}
	in which, without loss of generality, we set $\bar{y}_{a0} = a_{0,\delta}(x_a(0))/2$. 
	Now, assume $|x_a|_{\mathcal{A}_\delta} < r$ and use the definition of the Lipschitz constant of $h(\cdot)$ to bound
	\[
	\left|\dfrac{\partial a_{0,\delta}(x)}{\partial x}\right|_{x = x_a}
	\le 2  \int_0^1 \left|\left.\dfrac{\partial h(x)}{\partial x} \right|_{x = x_a+\delta u(t)} \right|\, dt  \le 2  L_r. 
	\]
	Solve \eqref{eq:DynEYA}  and use \eqref{remark1} to obtain
	\begin{equation}
		\label{rem:BoundEYA}
		|{e}_{y_a}(t)| \le 
		\delta  L_r^2\qquad \forall\,t\ge 0.
	\end{equation}
	As for
	\begin{equation}
		\label{eq:epsilon_1}
		\epsilon_1(\eta_a,t)= \int_{N}^{t} 
		\varepsilon_\delta({x}_a,\bar{y}_a,\tau)u(\tau)-b_{1,\delta}(x_a)/2 
		\,d \tau
	\end{equation}
	we add and subtract $h(x_a) u$ and $a_{0,\delta}(x_a) u /2$ inside the integral and we use the triangle inequality to  
	bound
	\begin{equation}
		\label{eq:epsilon0}
		\begin{aligned}
			|\epsilon_1&(\eta_a,t)| \le\\
			& \int_{N}^{t} 		\left|\varepsilon_\delta({x}_a,\bar{y}_a,\tau)u(\tau)-b_{1,\delta}(x_a)/2 \right|
			\,d \tau \le\\
			& \int_{N}^{t} 		
			|(h(x_a+\delta u)-h(x_a))u|
			\,d \tau \\
			& + \int_{N}^{t} 		
			|b_{1,\delta}(x_a)/2|
			\,d \tau \\
			& + \int_{N}^{t} 		
			|h(x_a)-a_{0,\delta}(x_a)/2|\,d \tau \\
			& + \int_{N}^{t} 		
			|a_{0,\delta}(x_a)/2-\bar{y}_a|\,d \tau. 
		\end{aligned}
	\end{equation}
	Then, in this order, for the terms appearing in \eqref{eq:epsilon0} we use the definition of the Lipschitz constant of $h(\cdot)$,  \eqref{remark1} , the Mean Value Theorem, and \eqref{rem:BoundEYA} to obtain
	\begin{align}
		\label{eq:epsilon}
		|\epsilon_1(\eta_a,t)| \le  \delta  L_r ( 3 +  L_r) .
	\end{align}
}{
	To bound $|\epsilon_1(\eta_a,t)|$ add and subtract $h(x_a)$ and $a_{0,\delta}(x_a)u/2$ into the integral of $\epsilon_1(\eta_a,t)$, use the triangle inequality, the Lipschitz constant of $h(\cdot)$, and the Mean Value Theorem. To bound $|\epsilon_2(\eta_a,t)|$ use the Mean Value Theorem. Then we demonstrate that
	\begin{align}
		\label{eq:epsilon}
		|\epsilon_1(\eta_a,t)| \le&\, \delta  L_r ( 3 +  L_r)\\
		\label{eq:BoundEspilon2}
		|\epsilon_2(\eta_a,t)| 
		\le &\, 
		L_r 2\delta.
	\end{align}
}

The benefit introduced by the presence of $\bar{y}$ is evident when \eqref{eq:epsilon} is compared to \eqref{eq:epsilonBasicESBound}. Indeed, in \eqref{eq:epsilon} only the Lipschitz constant of $h(\cdot)$ appears whereas  \eqref{eq:epsilonBasicESBound} is bounded by the supremum of $|h(\cdot)|$ which is radially unbounded. 
\ifthenelse{\boolean{LONG}}{
	As for 
	\begin{equation}
		\label{eq:Epsilon2}
		\epsilon_2(\eta_a,t) = \int_N^t h(x_a+\delta u(\tau))-\dfrac{a_{0,\delta}(x_a)}{2}\,d\tau
	\end{equation}
	thanks to the Mean Value Theorem we note that there exists $\bar{x}_1 \in [x_a-\delta,\, x_a+\delta]$ such that $h(\bar{x}_1)=\dfrac{a_{0,\delta}(x_a)}{2}$. Then
	\begin{equation}
		\label{eq:BoundEspilon2}
		\begin{aligned}
			|\epsilon_2(\eta_a,t)| 
			\le \int_N^t \left|h(x_a+\delta u(\tau))-h(\bar{x}_1)\right| \,d\tau 
			\le L_r 2\delta.
		\end{aligned}
	\end{equation}
}{}
Define $z(t) = \eta_a(t) - \gamma \epsilon(\eta_a(t),t)$, exploit the triangle inequality, and use \eqref{eq:epsilon} and \eqref{eq:BoundEspilon2} to write
\begin{equation}
	\label{eq:ineq}
	\begin{aligned}
		\|\eta(t)&-\eta_a(t)\| = \|\eta(t)-\eta_a(t) \pm z(t)\| \\
		\le  &\|\eta(t)-z(t)\|+\|z(t)-\eta_a(t)\|\\
		\le & \|\eta(t)-z(t)\| +\gamma  \|\epsilon(\eta_a(t),t)\|\\
		\le & \|\eta(t)-z(t)\| +\gamma  \bar{k}_3(L_r,\delta).
	\end{aligned}
\end{equation}
with $\bar{k}_3(L_r,\delta):= \delta L_r \sqrt{4+    (3 +  L_r)^2}$. 
Denote with $z_1$ and $z_2$ the first and second entry of $z$.  
In the next steps we find a bound for $x-z_1$ and $\bar{y}-z_2$ which represent the first and second entry of $\eta-z$. \ifthenelse{\boolean{LONG}}{
	Let us compute 
	\begin{equation}
		\label{eq:XmZ1}
		\begin{aligned}
			x(t)-z_1(t) =& \,x(0)-z_1(0) + \int_0^t\dot{x}(\tau)-\dot{x}_a(\tau)  d\tau\\
			&- \gamma   \int_0^t \dfrac{\partial  \epsilon_1(\eta,t)}{\partial \eta}\Big|_{\eta = \eta_a(\tau)}\dot{\eta}_a(\tau)  d\tau\\
			&- \gamma   \int_0^t  \dfrac{\partial }{\partial \tau} \epsilon_1(\eta_a(\tau),\tau) d\tau.
		\end{aligned}
	\end{equation}
	Then, we use \eqref{eq:GlobalESGeneric_dotx}, \eqref{eq:DotTildeXAv_2}, and \eqref{eq:epsilon_1} to rewrite \eqref{eq:XmZ1} as  
	\begin{equation}
		\label{eq:XmZ1_v2}
		\begin{aligned}
			&x(t)-z_1(t) 	= \,x(0)-z_1(0) \\
			&+ \gamma  \int_0^t
			\varepsilon_\delta(x(\tau),\bar{y}(\tau),\tau)u(\tau)
			-b_{1,\delta}(x_a(\tau))/2 
			d\tau\\
			&- \gamma    \int_0^t \dfrac{\partial  \epsilon_1(\eta,t)}{\partial \eta}\Big|_{\eta = \eta_a(\tau)}\dot{\eta}_a(\tau) d\tau\\
			&- \gamma   \int_0^t
			\varepsilon_\delta(x_a(\tau),\bar{y}_a(\tau),\tau)u(\tau)-b_{1,\delta}(x_a(\tau))/2
			d\tau.	
		\end{aligned}
	\end{equation}
	Add and subtract  the term
	\[
	\varepsilon_\delta(z_1(\tau),\bar{y}(\tau),\tau)u(\tau)
	\]
	inside the first integral of the right member of \eqref{eq:XmZ1_v2} and rearrange this latter as follows
	\begin{align}
		x(t)-&z_1(t) = \,x(0)-z_1(0)\nonumber \\
		+ \gamma  \int_0^t&
		\varepsilon_\delta(x(\tau),\bar{y}(\tau),\tau)u(\tau)
		-\varepsilon_\delta(z_1(\tau),\bar{y}(\tau),\tau)u(\tau)
		d\tau\nonumber\\
		+ \gamma  \int_0^t	&
		\varepsilon_\delta(z_1(\tau),\bar{y}(\tau),\tau)u(\tau)
		-\varepsilon_\delta(x_a(\tau),\bar{y}_a(\tau),\tau)u(\tau)
		d\tau\nonumber\\
		& -\gamma   \int_0^t \left.\dfrac{\partial \epsilon_1(\eta,\tau) }{\partial \eta}\right|_{\eta = \eta_a(\tau)}\dot{\eta}_a(\tau) d\tau.	 \label{eq:XmZ1_v3}
	\end{align}
	In analogy with \eqref{eq:z+d}-\eqref{eq:gamma1star}, let}{Let}  $\bar{c} \in (0, d)$ and 
define
\begin{equation}
	\gamma_0^\star = \dfrac{1}{2 L_r}\left( \sqrt{9+4 \bar{c}/\delta} -3\right).
\end{equation}
Then, \ifthenelse{\boolean{LONG}}{in agreement with the definition of $z_1$, $|z_1(t) - x_a(t)| \leq d$ for all $\gamma \in (0, \gamma_0^\star)$.
}{for all $\gamma \in (0, \gamma_0^\star)$}

\ifthenelse{\boolean{LONG}}{As consequence, 
	\begin{equation} 
		|z_1(t) + \delta u(t) - x^\star| \le r
	\end{equation}
	and so for all $x(t)$ satisfying $| x(t) - x_a(t)| \leq d$ we can exploit the Lipschitz constant of $h(\cdot)$ to derive
	\begin{equation}
		\label{eq:Ineq1}
		|\varepsilon_\delta(x,\bar{y},t)-\varepsilon_\delta(z_1,\bar{y},t)| \le L_r |x-z_1|.
	\end{equation}
	On the other hand, the use of the triangle inequality, the definition of $z_2$, and \eqref{eq:BoundEspilon2} lead to
	\[
	\begin{aligned}
		|\bar{y}-\bar{y}_a| =& \,|\bar{y}-\bar{y}_a\pm z_2| \le |\bar{y}-z_2|+|z_2-\bar{y}_a|\\
		\le &\, |\bar{y}-z_2|+\gamma|\epsilon_2(\eta_a,t)| \\
		\le &\, |\bar{y}-z_2|+\gamma L_r 2\delta 
	\end{aligned}
	\]
	which, with the use of the definitions of $\varepsilon_\delta$ and $z_1$, jointly with \eqref{eq:epsilon}, implies
	\begin{equation}
		\label{eq:Ineq2}
		\begin{aligned}
			|\varepsilon_\delta(z_1,\bar{y}&,t)-
			\varepsilon_\delta(x_a,\bar{y}_a,t)| \\
			\le&L_r \gamma |\epsilon_1(\eta_a,t)|+ |\bar{y}(t)-\bar{y}_a(t)| \\
			\le&   \gamma \delta L_r \left(2   +  L_r\ ( 3 + L_r)\right) + |\bar{y}-z_2|.
		\end{aligned}
	\end{equation}
	
	Moreover, through the triangle inequality we get
	\begin{equation}
		\label{eq:BoundNormPartialEpsilon1}
		\begin{aligned}
			\Bigg|&\dfrac{\partial \epsilon_1(\eta,t)}{\partial \eta}\Bigg|_{\eta = \eta_a}\dot{\eta}_a\Bigg| \le 
			\,
			\gamma
			|a_{0,\delta}(x_a)/2-\bar{y}_a|
			\\
			&+\gamma \left|\left.\dfrac{\partial\varepsilon_\delta(x,\bar{y}_a,t)-b_{1,\delta}(x)/2}{\partial x}\right|_{x = x_a}\right|\dfrac{|b_{1,\delta}(x_a)|}{2}.
		\end{aligned}
	\end{equation}
	Now, since $\partial \varepsilon_\delta(x,\bar{y},t)/\partial x = \partial y_\delta(x)/\partial x$ and  \eqref{eq:average-Fourier} coincides with \eqref{eq:DotTildeXAv_2},  we can exploit \eqref{eq:SomeBounds2} to bound
	\begin{equation}\label{eq:BoundNormPartialEpsilon1x}
		\left|\left.\dfrac{\partial\varepsilon_\delta(x,\bar{y}_a,t)-b_{1,\delta}(x)/2}{\partial x}\right|_{x = x_a}\right| \le 
		2 L_r .
	\end{equation}
	Then, substitute \eqref{eq:BoundNormPartialEpsilon1x} into \eqref{eq:BoundNormPartialEpsilon1} and use \eqref{remark1}  and \eqref{rem:BoundEYA} to bound 
	\begin{equation}
		\label{eq:Ineq3}
		\Bigg|\dfrac{\partial \epsilon_1(\eta,t)}{\partial \eta}\Bigg|_{\eta = \eta_a}\dot{\eta}_a \Bigg| \le 
		\gamma \delta    L_r^2 3
	\end{equation}
	and use \eqref{eq:epsilon}, \eqref{eq:Ineq1}, \eqref{eq:Ineq2} and \eqref{eq:Ineq3} to bound \eqref{eq:XmZ1_v3} as
}{}
\begin{equation}
	\label{eq:XZ1}
	\begin{aligned}
		|x&(t)-z_1(t)| \le  \,|x(0)-z_1(0)| + t\gamma^2   \bar{k}_3(L_r,\delta) \\
		&+ \gamma   \int_0^t
		L_r|x(\tau)-z_1(\tau)| + |\bar{y}(\tau)-z_2(\tau)|
		d\tau	
	\end{aligned}
\end{equation}
where $\bar{k}_1(L_r,\delta) :=  \delta  L_r \left( 
2 +   L_r ( 3 + L_r) + 
3L_r  \right)$.
\ifthenelse{\boolean{LONG}}{
	We now use the same conceptual steps \eqref{eq:XmZ1}-\eqref{eq:XZ1} to find a bound for $|\bar{y}(\tau)-z_2(\tau)|$. Compute
	\begin{equation}
		\label{eq:YZ2}
		\begin{aligned}
			\bar{y}(t)&-z_2(t)  = \bar{y}(0)-z_2(0) + \int_0^t \dot{y}(\tau)-\dot{z}_2(\tau) \,d\tau\\
			=& \, \bar{y}(0)-z_2(0) \\
			& + \gamma \int_0^t h(x(\tau)+\delta u(\tau)) - \dfrac{a_{0,\delta}(x_a(\tau))}{2} \,d\tau \\
			&-\gamma \int_0^t \bar{y}(\tau) -\bar{y}_a(\tau)\,d\tau\\
			&+ \gamma \int_0^t \left.\dfrac{\partial  \epsilon_2(\eta,\tau)}{\partial \eta}\right|_{\eta = \eta_a(\tau)} \dot{\eta}_a(\tau)\,d\tau\\
			&+ \gamma \int_0^t \dfrac{\partial \epsilon_2(\eta_a(\tau),\tau) }{\partial \tau}\,d\tau .
		\end{aligned}
	\end{equation}
	Add and subtract to the first and the second integral
	$h(x_a(\tau)+\delta u(\tau))$ and $z_2(\tau)$ respectively, use \eqref{eq:Epsilon2} and rearrange \eqref{eq:YZ2} as follows
	\begin{equation}
		\label{eq:YZ2_2}
		\begin{aligned}
			\bar{y}(t)&-z_2(t)  =  \bar{y}(0)-z_2(0) \\
			& + \gamma \int_0^t h(x(\tau)+\delta u(\tau)) - h(x_a(\tau)+\delta u(\tau)) \,d\tau \\
			&-\gamma \int_0^t \bar{y}(\tau)-z_2(\tau) \,d\tau-\gamma^2 \int_0^t \epsilon_2(\eta_2(\tau),\tau) \,d\tau\\	
			&+ \gamma \int_0^t \left.\dfrac{\partial  \epsilon_2(\eta,\tau)}{\partial \eta}\right|_{\eta = \eta_a(\tau)} \dot{\eta}_a(\tau)\,d\tau.
		\end{aligned}
	\end{equation}
	Use the defiition of the Lipschitz constant of $h$, the definition of $z_1$, the triangle inequality, \eqref{rem:BoundEYA},  and \eqref{eq:epsilon} to bound
	\begin{equation}
		\label{eq:Bound4}
		\begin{aligned}
			|h(x+&\delta u(\tau)) - h(x_a+\delta u(\tau))| \le L_r |x-x_a| \\
			&\le L_r(|x-z_1|+\gamma |\epsilon_1(\eta_a,\tau)|) \\
			&\le L_r(|x-z_1|+\gamma \delta  L_r ( 3 + L_r)).
		\end{aligned}
	\end{equation}
	On the other hand, we use \eqref{eq:FourierCoeff} into \eqref{eq:Epsilon2}, the Lipschitz constant of $h(\cdot)$, and  \eqref{remark1}  to write
	\begin{equation}
		\label{eq:Bound5}
		\begin{aligned}
			\Bigg|&\left.\dfrac{\partial  \epsilon_2(\eta,\tau)}{\partial \eta}\right|_{\eta = \eta_a(\tau)} \dot{\eta}_a(\tau)\Bigg| \le \\
			&	\gamma\Bigg|\left.\dfrac{\partial  \epsilon_2(\eta,\tau)}{\partial x_a}\right|_{\eta = \eta_a(\tau)}\Bigg| \dfrac{|b_{1,\delta}(x_a)|}{2}  \le \gamma 2 L_r^2 \delta^2. 
		\end{aligned}
	\end{equation}
	Use \eqref{eq:BoundEspilon2}, \eqref{eq:Bound4}, \eqref{eq:Bound5}, to bound \eqref{eq:YZ2_2} as
}{and}
\begin{equation}
	\label{eq:YZ2_3}
	\begin{aligned}
		|\bar{y}(t)&-z_2(t)|  \le | \bar{y}(0)-z_2(0)|  + t\gamma^2  \bar{k}_2(L_r,\delta)\\
		& + \gamma \int_0^t L_r|x-z_1| + |\bar{y}(\tau)-z_2(\tau)|\,d\tau 
	\end{aligned}
\end{equation}
where $\bar{k}_2(L_r,\delta) : =\delta L_r \left(2 +     L_r (2\delta + 3 +  L_r)  \right) $.

Now define $f(t) = |x(t)-z_1(t)|+
|\bar{y}(t)-z_2(t)|$ and $A = \max\{1,L_r\}$, sum \eqref{eq:XZ1} and \eqref{eq:YZ2_3} to obtain
\[
\begin{aligned}
	f(t) \le&\,  f(0)+ t \gamma^2\left(\bar{k}_3(L_r,\delta)+
	\bar{k}_2(L_r,\delta) \right) \\
	&+ \gamma A\int_0^t f(\tau)\,d\tau,
\end{aligned}
\]
and use the specific Gronwall Lemma [\cite{Sanders1985Averaging}, \S 1.3] to determine
\[
f(t) \le \gamma \dfrac{\bar{k}_3(L_r,\delta)+\bar{k}_2(L_r,\delta)}{A} \left(e^{ \gamma A t}-1\right)+  f(0) e^{\gamma A t}
\]
where, since $\epsilon(\eta_a(0),0) = 0$, we have
\[
\begin{aligned}
	f(0) =&\, |x(0)-z_1(0)|+ |\bar{y}(0)-z_2(0)|\\
	=& \, |x(0)-x_a(0)+\gamma \epsilon_1(\eta_a(0),0)|\\
	&\,+ |\bar{y}(0)-\bar{y}_a(0)+\gamma \epsilon_2(\eta_a(0),0)|\\
	=& \, |x(0)-x_a(0)|+ |\bar{y}(0)-\bar{y}_a(0)|.
\end{aligned}
\]
Use \eqref{eq:ineq} and $\|\cdot\|_2 \le \|\cdot\|_1$ to bound 
\[
\begin{aligned}
	\|\eta(t)-\eta_a(t)\|_2 
	\le &\, \|\eta(t)-z(t)\|_1 +\gamma  \bar{k}_3(L_r,\delta) \\
	= &\, f(t) +\gamma  \bar{k}_3(L_r,\delta).
\end{aligned}
\]
Finally, let $\bar{t}> 0$ and define
\[
\begin{aligned}
	\gamma_1^\star &:= \dfrac{\bar{c}}{3}\dfrac{A}{\bar{k}_1(L_r,\delta,\bar{c}/3)+\bar{k}_2(L_r,\delta,\bar{c}/3)} \dfrac{1}{e^{A \bar{t}}-1} \\
	\gamma_2^\star &:= \dfrac{\bar{c}}{3} \dfrac{1}{ e^{A \bar{t}}}\\
	\gamma_3^\star &:= \dfrac{\bar{c}}{3} \dfrac{1}{\bar{k}_3(L_r,\delta)}\\
	\gamma^\star &:=\min\{\gamma_0^\star,	\gamma_1^\star ,\,\gamma_2^\star,\,\gamma_3^\star\}
\end{aligned}
\]
then for any $t \in [0, \bar{t}/\gamma]$, for any $\|\eta(0)-\eta_a(0)\|_1 \le \gamma^\star$, and for any $\gamma \in (0, \gamma^\star)$
\[
\begin{aligned}
	\|\eta(t)-\eta_a(t)\|_2 \le&\, 
	f(t) +\gamma  \bar{k}_3(L_r,\delta)\\
	\le&\,  \gamma \dfrac{\bar{k}_3(L_r,\delta)+\bar{k}_2(L_r,\delta)}{A} \left(e^{ \gamma A t}-1\right)\\
	&\,+  f(0) e^{\gamma A t} +\gamma  \bar{k}_3(L_r,\delta)\\
	\le&\,  \gamma_1^\star \dfrac{\bar{k}_3(L_r,\delta)+\bar{k}_2(L_r,\delta)}{A} \left(e^{ \gamma A t}-1\right)\\
	&\,+  \gamma_2^\star e^{\gamma A t} +\gamma_3^\star  \bar{k}_3(L_r,\delta)\\
	\le &\, \dfrac{\bar{c}}{3}\dfrac{\bar{k}_3(L_r,\delta)+\bar{k}_2(L_r,\delta)}{\bar{k}_1(L_r,\delta,\bar{c}/3)+\bar{k}_2(L_r,\delta,\bar{c}/3)} \dfrac{e^{ \gamma A t}-1}{e^{A \bar{t}}-1} \\
	&\,+  \dfrac{\bar{c}}{3} \dfrac{e^{\gamma A t}}{ e^{A \bar{t}}} + \dfrac{\bar{c}}{3}\dfrac{\bar{k}_3(L_r,\delta)}{\bar{k}_3(L_r,\delta)} \le 
	\bar{c}.
\end{aligned}
\]
It is worth noting that $A$, $\bar{k}_1$, $\bar{k}_2$, and $\bar{k}_3$ are not dependent on the cost function magnitude $M_r$. To stress this point let $\gamma^\star(\bar t, \bar c, L_r, \delta) := \min\{\gamma_0^\star,	\gamma_1^\star ,\,\gamma_2^\star,\,\gamma_3^\star\}$.

Finally, we use the same arguments adopted at the end of the proof of Proposition \ref{prop:BasicES} to exploit  the local exponential stability of the attractor $\mbox{\rm graph} \left. \tau \right |_{{\cal A}_\delta}$ to extend the previous bound for any $t \geq \bar{t}/ \gamma$.

\section{Results on Averaging}
\label{sec:Averaging}
Let $f(\cdot,\cdot) \,:\, \mathbb{R}^n\times [0,\infty) \to \mathbb{R}$, let $\gamma > 0$ and consider the system
\begin{subequations}
	\label{eq:AveragingProblem}
\begin{equation}
\dot{x} =  \gamma f(x,t)\qquad x(0) = x_0.
\end{equation}
Let $T > 0$ and assume $f$ to be $T$-periodic in $t$, then the associated \textit{average} system is defined as
\begin{equation}
\dot{x}_a =  \gamma f_a(x_a)\qquad x_a(0) = x_0
\end{equation}
with
\begin{equation}
f_a(x) := \dfrac{1}{T} \int_{0}^{T} f(x,t)\,dt.
\end{equation}
\end{subequations}
The results provided in Lemma \ref{lemma:Averaging} are grounded on the following assumptions:
\begin{itemize}
	\item[A1)] $f(x,t)$ is Lipschitz continuous and bounded on any compact set $\mathcal{D}\subset \mathbb{R}^n$, uniformly in $t$, \textit{i.e.} there exist (finite) $L_\mathcal{D},\,M_\mathcal{D} > 0$ such that
	\begin{itemize}
		\item $
		\|f(x_1,t)-f(x_2,t)\| \le L_{\mathcal{D}}\|x_1-x_2\|
		$
		for all $x_1,x_2 \in \mathcal{D}$ and for all $ t \ge 0$
		\item $\|f(x,t)\| \le M_\mathcal{D}$ for all $x \in \mathcal{D}$ and for all $ t \ge 0$
	\end{itemize}
	\item[A2)]  There exists  $\mathcal{A} \subset \mathbb{R}^n$, and $\beta(\cdot,\cdot) \in \mathcal{KL}$ such that 
	\[
	\|x_a(t)\|_{\mathcal{A}} \le \beta(\|x_a(0)\|_{\mathcal{A}},\gamma t)\qquad \forall\, x_a(0) \in \mathbb{R}^n.
	\]
	\ifthenelse{\boolean{CONF}}{}{
	\item[A3)] Let A1) be verified for $\mathcal{D} = \mathbb{R}^n$. Let A2) and assume that for any $\rho,d > 0$ with $\rho > d$ there exists (finite) $\bar{t}^\star > 0$ such that 
	\[
	\beta(\rho,\bar{t}) < \rho-d \qquad \forall 	\, \bar{t} > \bar{t}^\star.
	\]
}
\end{itemize}

\begin{lemma}
	\label{lemma:Averaging}
	Let \eqref{eq:AveragingProblem},  then the following claims are true:
	\begin{enumerate}
		\item[C1)] {\bf Closeness.} Let A1) be verified and pick $d >0$ and $\underline{\mathcal{D}} \subset \mathbb{R}^n$. Define $\mathcal{D}$ as the smallest set such that if $\|x\|_{\underline{\mathcal{D}}}\le d$ then $x \in \mathcal{D}$. Then for any $\bar{t} > 0$ there exists $\gamma^\star(d,\bar{t},M_\mathcal{D},L_\mathcal{D}) > 0$ such that for any $\gamma\in (0,\gamma^\star)$
		\[
		 \|x(t)-x_a(t)\| \le d 
		\]
		for all $t \in [0,\,\bar{t}/\gamma]$ such that $x(t),x_a(t) \in \underline{\mathcal{D}}$;
		\item[C2)] {\bf Semi-Global Practical Stability.} Let A1) and A2) be verified. Then, for any $d,\rho >0$ there exists $\bar{t} > 0$ and $\gamma^\star(d,\bar{t},M_\mathcal{D},L_\mathcal{D})$ such that for any $x_0\in \mathbb{R}^n\,:\,\|x_0\|_\mathcal{A}\le \rho$ and for any $\gamma \in (0, \gamma^\star)$ the trajectories $t \mapsto x(t)$ are bounded and
\[
\|x(t)\|_{\mathcal{A}} \le d \qquad \forall\, t \ge \dfrac{\bar{t}}{\gamma}.
\]
\ifthenelse{\boolean{CONF}}{}{
\item[C3)] \label{lemma:Global} {\bf Global Practical Stability.} Let A3) be verified. Then, there exists $\gamma^\star > 0$ such that for any $\gamma\in(0,\gamma^\star)$, and for any $x_0\in \mathbb{R}^n$
the trajectories $t \mapsto x(t)$ are bounded and
\[
\limsup_{t \to \infty} \|x(t)\|_{\mathcal{A}} \le d .
\]
}
	\end{enumerate}
\end{lemma}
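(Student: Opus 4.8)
The plan is to establish the three claims in the order C1 $\Rightarrow$ C2 $\Rightarrow$ C3, using the near-identity averaging transformation as the engine for C1 and then bootstrapping the stability statements from the $\mathcal{KL}$ estimates in A2 and A3. For C1 I would first introduce the oscillatory primitive
\[
u(x,t) := \int_0^t \bigl[f(x,\sigma) - f_a(x)\bigr]\,d\sigma .
\]
By the definition of $f_a$ the integrand has zero mean over one period, so $t\mapsto u(x,t)$ is $T$-periodic with $u(x,nT)=0$; restricting the integral to at most one period and invoking A1 gives the uniform bounds $\|u(x,t)\|\le 2TM_{\mathcal D}$ and $\|u(x_1,t)-u(x_2,t)\|\le 2TL_{\mathcal D}\|x_1-x_2\|$ for $x,x_1,x_2\in\mathcal D$. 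I would then set $z(t):=x_a(t)+\gamma\,u(x_a(t),t)$, a transform chosen so that, after substituting $\dot x_a=\gamma f_a(x_a)$ and $\partial u/\partial t=f(x_a,t)-f_a(x_a)$, the leading terms collapse and $\dot z=\gamma f(x_a,t)+\gamma^2 R(x_a,t)$ with $R$ bounded on $\mathcal D$ by $L_{\mathcal D},M_{\mathcal D}$. Since $z(0)=x(0)$, comparing $\dot x=\gamma f(x,t)$ with $\dot z$ and using the Lipschitz property together with $\|x-x_a\|\le\|x-z\|+\gamma\|u\|$ yields
\[
\frac{d}{dt}\|x-z\|\le \gamma L_{\mathcal D}\|x-z\|+\gamma^2\bar K .
\]
Gronwall on the stretched horizon $[0,\bar t/\gamma]$ then gives $\|x(t)-z(t)\|\le (\gamma\bar K/L_{\mathcal D})(e^{L_{\mathcal D}\bar t}-1)$, whose right-hand side is $O(\gamma)$ because $\gamma t\le\bar t$ keeps the exponential $\gamma$-independent; adding the $O(\gamma)$ gap $\|z-x_a\|\le\gamma\|u\|$ bounds $\|x-x_a\|$ below any prescribed $d$ for $\gamma<\gamma^\star(d,\bar t,M_{\mathcal D},L_{\mathcal D})$, which is C1. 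Where $f$ is only Lipschitz I would keep the comparison in integral form and bound the increments of $u$ through its Lipschitz constant rather than a pointwise $\partial u/\partial x$, so that no regularity beyond A1 is needed.

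For C2 I would exploit A2 exactly as at the end of the proof of Proposition \ref{prop:BasicES}. Pick $\bar t$ so that $\beta(\rho,\bar t)\le d/2$, which is possible since $\beta(\rho,\cdot)$ vanishes at infinity; then A2 gives $\|x_a(\bar t/\gamma)\|_{\mathcal A}\le d/2$ and C1, applied on the compact fattening $\mathcal D$ of a ball containing the trajectory, gives $\|x(\bar t/\gamma)-x_a(\bar t/\gamma)\|\le d/2$, so $\|x(\bar t/\gamma)\|_{\mathcal A}\le d$. To propagate this for all later times I would partition the axis into intervals $I_n=[n\bar t/\gamma,(n+1)\bar t/\gamma)$, restart the average system at the beginning of each $I_n$ from the current state $x(n\bar t/\gamma)$, and alternate the $\mathcal{KL}$ contraction of A2 with the closeness bound of C1 on each interval, with a single $\gamma$ valid throughout. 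Uniform boundedness, needed so that the constants $M_{\mathcal D},L_{\mathcal D}$ remain valid on the infinite horizon, follows from the crude estimate $\|x_a(t)\|_{\mathcal A}\le\beta(\rho,0)$ plus the $d$-closeness, which confines the trajectory to a fixed compact set.

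For C3 the structure is identical, but now A3 supplies A1 with $\mathcal D=\mathbb R^n$, so the constants $L,M$ and hence $\gamma^\star$ in C1 are uniform over the whole space and independent of the initial radius $\rho$; moreover the strict-decrease condition $\beta(\rho,\bar t)<\rho-d$ for $\bar t>\bar t^\star$ forces each interval $I_n$ to reduce $\|x\|_{\mathcal A}$ by a fixed positive margin until it drops below $d$. Choosing $\gamma$ small enough that the $O(\gamma)$ closeness error per interval is dominated by this margin shows that from any $x_0$ the trajectory enters and remains in a $d$-neighbourhood of $\mathcal A$, giving $\limsup_{t\to\infty}\|x(t)\|_{\mathcal A}\le d$.

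The hard part will be the bootstrapping in C2 and C3: establishing uniform boundedness of the original trajectory over the infinite horizon and verifying that the per-interval closeness error is genuinely dominated by the $\mathcal{KL}$ contraction (respectively the A3 decrease) under a single, interval-independent choice of $\gamma$. For C3 the additional difficulty is that \emph{global} rather than semi-global boundedness must be extracted solely from the global Lipschitz and boundedness in A1 and the uniform decrease in A3, without any compactness of the set of initial conditions.
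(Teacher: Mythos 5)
Your C1 is essentially the paper's proof: the same oscillatory primitive (the paper calls it $\epsilon(x,t)$), the same near-identity variable $z=x_a+\gamma\,\epsilon(x_a(t),t)$, the same $O(\gamma)$ bounds $2TM_{\mathcal D}$ and $2TL_{\mathcal D}$, and the same Gronwall estimate on the stretched horizon $[0,\bar t/\gamma]$; your remark about bounding increments of $u$ via its Lipschitz constant rather than through $\partial u/\partial x$ is a small refinement of a point the paper glosses over (its proof differentiates $\epsilon$ and $f$ although A1 only grants Lipschitz continuity). The architecture you propose for C2 and C3 --- restarting the average system at the beginning of each interval $I_n=[n\bar t/\gamma,(n+1)\bar t/\gamma)$, alternating the $\mathcal{KL}$ estimate of A2 with the closeness bound of C1 under one fixed $\gamma$, and extracting a fixed per-interval decrease from A3 --- is also exactly the paper's.

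The genuine gap is the one you flag yourself but do not resolve, and with the constants you wrote down the induction would in fact fail. Choosing $\bar t$ with $\beta(\rho,\bar t)\le d/2$ and closeness tolerance $d/2$ only yields $\|x(\bar t/\gamma)\|_{\mathcal A}\le d$ at the first restart; on $I_1$ the average trajectory restarted from $x(\bar t/\gamma)$ can overshoot to $\beta(d,0)$, which in general exceeds $d$ (a $\mathcal{KL}$ bound need not satisfy $\beta(s,0)\le s$, and by A2 at $t=0$ one rather has $\beta(s,0)\ge s$), so neither the claim $\|x(t)\|_{\mathcal A}\le d$ on $I_1$ nor the hypothesis needed at the next restart survives. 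The paper's missing ingredient is a strictly smaller restart level: let $\underline{\rho}$ be the largest value with $\beta(\underline{\rho},0)<d/2$, pick the closeness tolerance $\tilde d\in(0,\underline{\rho})$, and enlarge $\bar t$ so that $\beta(\rho,\bar t)+\tilde d\le\underline{\rho}$. Then the invariant $\|x(n\bar t/\gamma)\|_{\mathcal A}\le\underline{\rho}$ propagates from one restart to the next, since $\beta(\underline{\rho},\bar t)+\tilde d\le\beta(\rho,\bar t)+\tilde d\le\underline{\rho}$, while inside each interval $\|x(t)\|_{\mathcal A}\le\beta(\underline{\rho},0)+\tilde d\le d$. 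The same level makes your C3 margin argument precise: as long as $r_n:=\|x(n\bar t/\gamma)\|_{\mathcal A}>\underline{\rho}$, A3 (applied with $\mathcal D=\mathbb R^n$, which keeps $\gamma^\star$ independent of $\rho$) gives $r_{n+1}\le\beta(r_n,\bar t)+\tilde d<r_n-(\underline{\rho}-\tilde d)$, a decrease by the fixed amount $\underline{\rho}-\tilde d>0$ per interval, so finitely many intervals bring the state below $\underline{\rho}$, after which the C2 iteration takes over and yields the $\limsup$ bound.
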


\begin{proof} Let A1) be verified. As a starting point, we note that $\|f_a(x)\|\le M_\mathcal{D}$ for all $x \in \mathcal{D}$ thanks to the $t$-uniform boundedness of $f(x,t)$ on $\mathcal{D}$ and thanks to the definition of $f_a(x)$. Let 
\[
\epsilon(x,t) := \int_0^t f(x,\tau) - f_a(x) \,d\tau,
\]
and define $N \in \mathbb{N}$ as the largest integer such that $NT \le t$. Then, thanks to the $T$-periodicity of $f(x,t)$ we have
\[
\epsilon(x,t) = \int_0^t f(x,\tau) - f_a(x) \,d\tau = \int_{NT}^t f(x,\tau) - f_a(x) \,d\tau,
\]
from which, using the triangle inequality,
\[
\epsilon(x,t) \le \int_{NT}^t \|f(x,\tau)\| + \|f_a(x)\| \,d\tau \le 2 M_\mathcal{D} T.
\]
Exploit $z(t) := x_a(t)+\gamma \epsilon(x_a(t),t)$  to bound
\[
\begin{aligned}
\|x(t)-x_a(t)\| &\,\le \|x(t)-z(t)\| + \|z(t)-x_a(t)\| \\
&\, \le \|x(t)-z(t)\|+\gamma \|\epsilon(x_a(t),t)\|  \\
&\, \le \|x(t)-z(t)\|+\gamma 2M_\mathcal{D} T.
\end{aligned}
\]
On the other hand
\[
\begin{aligned}
	x(t)-z(t) = &\, x(0)-z(0) + \int_0^t \dot{x}(\tau)-\dot{z}(\tau)\,d\tau \\
	= &\, x(0)-z(0) 		\\
	&\, + \gamma \int_0^t f(x(\tau),\tau)\,d\tau \\
	&\, - \int_0^t \dot{x}_a(\tau)\,d\tau \\
		&\, - \gamma \int_0^t  \left.\dfrac{\partial \epsilon(s,\tau)}{\partial s}\right|_{s = x_a(\tau)}\dot{x}_a(\tau)\,d\tau \\
			&\, -\gamma \int_0^t \left.\dfrac{\partial \epsilon(x_a(\tau),s)}{\partial s}\right|_{s = \tau}\,d\tau \\
	= &\, x(0)-z(0) 		\\
&\, + \gamma \int_0^t f(x(\tau),\tau)-f(x_a(\tau),\tau)\,d\tau \\
&\, - \gamma^2 \int_0^t  \left.\dfrac{\partial \epsilon(s,\tau)}{\partial s}\right|_{s = x_a(\tau)}f_a(x_a(\tau))\,d\tau \\
\end{aligned}
\]

Add and subtract within the first integral $f(z(\tau),\tau)$, let $\bar{c} \in (0, \,d)$, and $\gamma_1^\star := \bar{c}/(2M_\mathcal{D}T)$. Define $L_\mathcal{D}$ as the Lipschitz constant of $f(x,t)$ for $x \in \mathcal{D}$.  Then, 
for any $\gamma \in (0, \gamma_1^\star)$ and for any $x_a(t) \in \underline{\mathcal{D}}$ we have $z(t) \in \mathcal{D}$ and 
\[
\begin{aligned}
\|f(z(\tau),\tau)-f(x_a(\tau),\tau)\| \le&\, L_\mathcal{D}\|z(\tau)-x_a(\tau)\| \\
\le &\,\gamma 2 L_\mathcal{D} M_\mathcal{D} T\\
\|f(x(\tau),\tau)-f(z(\tau),\tau)\| \le&\, L_\mathcal{D}\|x(\tau)-z(\tau)\|.
\end{aligned}
\] 
Moreover, note that
\[
\begin{aligned}
	\left\|\dfrac{\partial \epsilon(s,t)}{\partial s}\right\| \le  \int_{NT}^t \left\|\dfrac{\partial f(s,\tau)}{\partial s}\right\|+\left\|\dfrac{\partial f_a(s)}{\partial s}\right\| \,d\tau \le 2 L_\mathcal{D} T,
\end{aligned}
\]
then
\[
\begin{aligned}
	\|x(t)-z(t)\| \le &\, \|x(0)-z(0) \|		\\
	&\, + \gamma \int_0^t \|f(x(\tau),\tau)-f(z(\tau),\tau)\|\,d\tau \\
	&\, + \gamma \int_0^t \|f(z(\tau),\tau)-f(x_a(\tau),\tau)\|\,d\tau \\
	&\, + \gamma^2 \int_0^t  \left\|\left.\dfrac{\partial \epsilon(s,\tau)}{\partial s}\right|_{s = x_a(\tau)}\right\|\|f_a(x_a(\tau))\|\,d\tau \\
	\le &\, \|x(0)-z(0) \|		\\
	&\, + \gamma L_\mathcal{D} \int_0^t \|x(\tau)-z(\tau)\|\,d\tau \\
	&\, + \gamma^2 3 L_\mathcal{D} M_\mathcal{D} T  t.
\end{aligned}
\]
Apply the Gronwall Lemma to compute
\[
\begin{aligned}
&\|x(t)-z(t)\| \le  \|x(0)-z(0) \|	+ \gamma^2 3 L_\mathcal{D} M_\mathcal{D} T  t	\\
	& + \gamma L_\mathcal{D} \int_0^t (\|x(0)-z(0) \|	+ \gamma^2 3 L_\mathcal{D} M_\mathcal{D} T  \tau)\\
	& \qquad \cdot e^{\gamma L_\mathcal{D} (t-\tau)}\,d\tau\\
	 & =  \|x(0)-z(0) \| e^{\gamma L_\mathcal{D}t}  + \gamma 3  M_\mathcal{D} T  \left(e^{\gamma L_\mathcal{D}t} -1\right)
\end{aligned}
\]
from which
\[
\begin{aligned}
\|x(t)-x_a(t)\| \le&\, \|x(0)-z(0) \| e^{\gamma L_\mathcal{D}t}  \\
&\,+ \gamma 3  M_\mathcal{D} T  \left(e^{\gamma L_\mathcal{D}t} -1\right)\\
&\,+\gamma 2M_\mathcal{D} T.
\end{aligned}
\]
Then, let $ \bar{t} > 0$, 
\[
\begin{aligned}
	\gamma_2^\star := &\, \dfrac{d}{6 M_\mathcal{D}T e^{L_\mathcal{D}\bar{t}}}\\
	\gamma_3^\star := &\, \dfrac{d}{9  M_\mathcal{D} T  \left(e^{ L_\mathcal{D}\bar{t}} -1\right)}\\
	\gamma_4^\star := &\,\dfrac{d}{6 M_\mathcal{D} T},
\end{aligned}
\]
and define $\gamma^\star(d,\bar{t},M_\mathcal{D},L_\mathcal{D}) =\min\{1,\,\gamma_1^\star,\gamma_2^\star,\gamma_3^\star,\gamma_4^\star\}$. Without loss of generality pick $x_a(0)=x_0$, then for any $\gamma \in(0,\, \gamma^\star)$  
\[
\begin{aligned}
	\|x(t)-x_a(t)\| \le&\, \gamma 2 M_\mathcal{D}T e^{\gamma L_\mathcal{D}t}  \\
	&\,+ \gamma 3  M_\mathcal{D} T  \left(e^{\gamma L_\mathcal{D}t} -1\right)\\
	&\,+\gamma 2M_\mathcal{D} T\\
	\le&\, \gamma_1^\star 2 M_\mathcal{D}T e^{ \gamma L_\mathcal{D}t}  \\
	&\,+ \gamma_2^\star 3  M_\mathcal{D} T  \left(e^{ \gamma L_\mathcal{D}t} -1\right)\\
	&\,+\gamma_3^\star 2M_\mathcal{D} T\\
\le&\, \dfrac{d}{3}\left(\dfrac{e^{ \gamma L_\mathcal{D}t}}{  e^{L_\mathcal{D}\bar{t}}}    
+ \dfrac{e^{\gamma L_\mathcal{D}t} -1}{e^{ L_\mathcal{D}\bar{t}} -1}  +1\right) \le d
\end{aligned}
\]
for all $t \in [0, \bar{t}/\gamma]$ as long as $x(t),x_a(t) \in \underline{\mathcal{D}}$. This proves C1). 

To prove C2) let $$\mathcal{D}:=\{x\in\mathbb{R}^n\,:\,\|x\|_\mathcal{A} \le \beta(\rho,0) + d\}.$$
Let   $\underline{\rho}$ be the largest real such that 
\[
\beta(\underline{\rho},0) < d/2.
\]
Define $\tilde{d} \in (0, \underline{\rho})$ and find $\bar{t}_1^\star > 0$ such that
\[
\beta(\rho,\tau) + \tilde{d} \le \underline{\rho} \qquad \forall\,\tau \ge \bar{t}_1^\star.
\]
Let $\bar{t} > \bar{t}_1^\star$, then for any  $\gamma \in (0,\,\gamma^\star(\tilde{d},\bar{t},M_\mathcal{D},L_\mathcal{D}))$ and any $x_0 \in \mathbb{R}^n$ such that $\|x_0\|_\mathcal{A} \le \rho$ we have
\[
\begin{aligned}
	\|x(t)\|_{\mathcal{A}} & \le \|x(t)-x_a(t)\| + \|x_a(t)\|_\mathcal{A}\\
						   & \le \tilde{d} + \beta(\|x_0\|_{\mathcal{A}},\gamma t) && \forall \, t \in [0, \bar{t}/\gamma]
\end{aligned}
\]
and
\[
	\|x(\bar{t}/\gamma)\|_{\mathcal{A}}  \le  \underline{\rho}. 
\]
Now divide the time axis into sub-intervals of the form   
\[
I_n:=\left[n\dfrac{\bar{t}}{\gamma},\,(n+1)\dfrac{\bar{t}}{\gamma}\right) \quad n \in {\mathbb{N}}
\]
and, with $x_a(t,x_{a0})$ the trajectory of the average system at time $t$ with initial condition $x_{a0}$ at time $t=0$, let
\[
x_n(t) :=x_a(t-n{\bar{t}}/{\gamma},x(n{\bar{t}}/{\gamma})) \qquad \forall \, t \in I_n.
\]
 The same arguments used in the first part of the proof show that   
\[	
\|x(t)-x_n(t)\| \leq \tilde{d} \qquad \forall\,t \in I_n,\,\forall \, n\in\mathbb{N}
\]
Moreover,
\[
\begin{aligned}
\|x(t)\|_{\mathcal{A}} &\le \|x_{n}(t)\|_{\mathcal{A}} + \|x(t)-x_{n}(t)\| \\
	&\le \beta(\|x(n\bar{t}/\gamma)\|_{\mathcal{A}},\gamma t-n\bar{t}) + \tilde{d}\\
	&\le \beta(\|x(n\bar{t}/\gamma)\|_{\mathcal{A}},0) + \tilde{d}&& \forall\,t \in I_n,\,\forall \, n\in\mathbb{N}
\end{aligned}
\]
and
\[
\begin{aligned}
	\|x((n+1)\bar{t}/\gamma)\|_{\mathcal{A}} &\le \beta(\|x(n\bar{t}/\gamma)\|_{\mathcal{A}},\bar{t}) + \tilde{d} && \forall \, n\in\mathbb{N}.
\end{aligned}
\]
Finally, since for $n=1$
\[
\begin{aligned}
	\|x(t)\|_{\mathcal{A}} & \le \beta(\|x(\bar{t}/\gamma)\|_{\mathcal{A}},0) + \tilde{d}\\
						   & \le \beta(\underline{\rho},0) + \tilde{d}\\
						   & \le d && \forall\, t \in I_1
\end{aligned}
\]
and
\[
\begin{aligned}
	\|x(2 \bar{t}/\gamma)\|_{\mathcal{A}} &\le \beta(\|x(\bar{t}/\gamma)\|_{\mathcal{A}},\bar{t}) + \tilde{d} \\
	&\le \beta(\underline{\rho},\bar{t}) + \tilde{d} \\
	&\le \beta(\rho,\bar{t}) + \tilde{d} \\
	& \le \underline{\rho}
\end{aligned}
\]
the arguments can be iterated for any $n > 1$. This proves C2).

To prove C3), define $\underline{\rho}$, $\tilde{d}$ and $\bar{t}_1^\star$  by following the same steps used before to prove C2). Using A3) define $\bar{t}_2^\star > 0$ such that
\[
\beta(r,\bar{t}) < r-\underline{\rho} \qquad \forall \, r > \underline{\rho}, \,\forall	\, \bar{t} > \bar{t}_2^\star.
\]
Define
\[
\bar{t} > \max\{\bar{t}_1^\star,\,\bar{t}_2^\star\}
\]
and pick $\gamma \in (0,\,\gamma^\star(\tilde{d},\bar{t},M_{\mathbb{R}^n},L_{\mathbb{R}^n}))$. As done to prove C2) divide the time axis into sub-intervals $I_n$ and on each interval define $x_n(t)$. As consequence, for any $n \in \mathbb{N}$ and any $x_0 \in \mathbb{R}^n$, we have that $x_{a0} = x_0$ implies
\[
\|x(t)-x_n(t)\| \le \tilde{d} \qquad \forall \, t \in I_n.
\]
Then, as far as $\|x((n \bar{t})/\gamma )\|_{\mathcal{A}} > \underline{\rho}$
\[
\begin{aligned}
	\|x((n+1)\bar{t}/\gamma)\|_{\mathcal{A}} \le &\, \|x_n((n+1)\bar{t}/\gamma)\|_{\mathcal{A}}\\
	&\, +\|x((n+1)\bar{t}/\gamma)-x_n((n+1)\bar{t}/\gamma)\| \\
	\le &\,  \|x_a(\bar{t}/\gamma,x(n \bar{t}/\gamma )\|_{\mathcal{A}}+\tilde{d}\\
	\le &\,  \beta(\|x(n \bar{t}/\gamma )\|_{\mathcal{A}}, \bar{t}/\gamma)+\tilde{d}\\
	\le &\,  \beta(\|x(n \bar{t}/\gamma )\|_{\mathcal{A}}, \bar{t}_2^\star/\gamma)+\tilde{d}\\
	< &\, \|x(n \bar{t}/\gamma )\|_{\mathcal{A}}.
\end{aligned}
\] 
Then we can conclude that as far as $\|x(n \bar{t}/\gamma )\|_{\mathcal{A}} > \underline{\rho}$ the sequence
\[
\{\|x(n\bar{t}/\gamma)\|_{\mathcal{A}}\}_{n\in \mathbb{N}}
\]
is strictly decreasing and
\[
\limsup_{n \to \infty} \|x(n \bar{t}/\gamma )\|_{\mathcal{A}} \le \underline{\rho}.
\]
\ifthenelse{\boolean{CONF}}{}{
	The proof of C3) can be then obtained by following the same final steps used to prove C2).}
\end{proof}
\end{appendices}

\end{document}